\pgfplotsset{compat=1.18}
\DeclareMathOperator*{\argmax}{arg\,max}
\DeclareMathOperator*{\argmin}{arg\,min}
\newcommand{\mres}{\mathbin{\vrule height 1.4ex depth 0pt width
0.13ex\vrule height 0.13ex depth 0pt width 1.0ex}}
\definecolor{hanblue}{rgb}{0.27, 0.42, 0.81}
\definecolor{mordantred19}{rgb}{0.68, 0.05, 0.0}
\definecolor{darkgreen}{rgb}{0.0, 0.38, 0.12}
\definecolor{red}{rgb}{0.8, 0.0, 0.0}
\definecolor{green}{rgb}{0.0, 0.5, 0.0}
\newcommand{\B}{\mathcal{B}}
\newcommand{\weakstar}{\stackrel{*}{\rightharpoonup}}
\newcommand{\st}{\,|\,}
\newcommand{\e}{\varepsilon}
\DeclarePairedDelimiter{\mnorm}  {\lVert}{\rVert_{\mathcal{M}}}
\DeclarePairedDelimiter{\cnorm}  {\lVert}{\rVert_{\mathcal{C}}}
\DeclarePairedDelimiter{\omegaynorm}  {\lVert}{\rVert_{\mathcal{C}(\Omega;Y)}}
\DeclareMathOperator{\sign}{sign}
\DeclareMathOperator{\supp}{supp}
\DeclarePairedDelimiter{\seq}()
\newcommand{\R}{\mathbb{R}}
\newcommand{\A}{\mathcal{A}}
\newcommand{\C}{\mathcal{C}}
\newcommand{\E}{\mathcal{E}}
\newcommand{\D}{\mathcal{D}}
\newcommand{\J}{\mathcal{J}}
\newcommand{\N}{\mathbb{N}}
\renewcommand{\L}{\mathcal{L}}
\newcommand{\cC}{\mathcal{C}}
\newcommand{\dd}{\, \mathrm{d}}
\newcommand{\Moc}{\mathcal{M}(\Omega)}
\newcommand{\Mm}{\mathcal{M}}
\newcommand{\M}[1]{\mathcal{M}(#1)}
\DeclarePairedDelimiterX\set[1]\{\}{#1}
\DeclarePairedDelimiterX\dual[2]{\langle}{\rangle}{#1,#2}
\newcommand\restr[2]{{
  \left.\kern-\nulldelimiterspace 
  #1 
  \vphantom{\big|} 
  \right|_{#2} 
  }}
\numberwithin{equation}{section}
\theoremstyle{plain}
\newtheorem{theorem}{Theorem}[section]
\newtheorem{lemma}[theorem]{Lemma}
\newtheorem{proposition}[theorem]{Proposition}
\newtheorem{corollary}[theorem]{Corollary}
\newtheorem{assumption}{Assumption}
\newtheorem{remark}[theorem]{Remark}
\theoremstyle{definition}
\newtheorem{definition}[theorem]{Definition}
\definecolor{hanblue}{rgb}{0.27, 0.42, 0.81}
\title{Lazifying point insertion algorithms in spaces of measures}
\author{Arsen Hnatiuk$^{\ast}$, Daniel Walter\thanks{Institut f\"ur Mathematik, Humboldt-Universit\"at zu Berlin, 10117 Berlin, Germany \newline (\texttt{arsen.hnatiuk@hu-berlin.de, daniel.walter@hu-berlin.de})}}
\date{}
\begin{document}
\maketitle

\begin{abstract}
Greedy point insertion algorithms have emerged as an attractive tool for the solution of minimization problems over the space of Radon measures. Conceptually, these methods can be split into two phases: first, the computation of a new candidate point via maximizing a continuous function over the spatial domain, and second, updating the weights and/or support points of all Dirac-Deltas forming the iterate. Under additional structural assumptions on the problem, full resolution of the subproblems in both steps guarantees an asymptotic linear rate of convergence for pure coefficient updates, or finite step convergence, if, in addition, the position of all Dirac-Deltas is optimized. In the present paper, we lazify point insertion algorithms and allow for the inexact solution of both subproblems based on computable error measures, while provably retaining improved theoretical convergence guarantees. As a specific example, we present a new method with a quadratic rate of convergence based on Newton steps for the weight-position pairs, which we globalize by point-insertion as well as clustering steps.  
\end{abstract}

\paragraph{Keywords} Nonsmooth optimization, Radon measures, Sparsity, Generalized Conditional Gradient, Lazy algorithms

\paragraph{2020 Mathematics Subject Classification} 46E27, 65K05, 90C25, 90C46

\section{Introduction} \label{sec:introduc}
Given a compact set $\Omega \subset \R^d$ as well as a convex fidelity measure $F$, we are interested in numerical algorithms for minimization problems on the space $\Mm$ of Radon measures on $\Omega$,
\begin{equation}
    \tag{$\mathcal{P}$}
    \label{eq: problem setting}
    \min_{u\in\mathcal{M}}J(u)= \left \lbrack F(Ku)+\alpha\mnorm{u} \right \rbrack, \quad \text{where} \quad K\mu= \int_\Omega \kappa(x)~\dd \mu(x)
\end{equation}
and $\kappa \colon \Omega \to Y $ denotes a kernel function mapping to a Hilbert space $Y$. The image of the latter can be interpreted as a potentially continuous dictionary of elements in $Y$, which is indexed by the set $\Omega$. In particular, this ansatz allows for modeling linear combinations of atoms in the dictionary via \textit{sparse measures},
\begin{equation*}
    Ku= \sum^N_{j=1} \lambda^j \kappa(x^j), \quad \text{where} \quad u=\mathcal{U}(\mathbf{x}, \lambda)= \sum^N_{j=1}\lambda^j\delta_{x^j}
\end{equation*}
is the associated weighted sum of Dirac-Delta functionals. By incorporating the Radon norm $\mnorm{\cdot}$ as a regularizer in \eqref{eq: problem setting}, we encourage the existence of minimizers $\bar{u}$ exhibiting this desired structural property, i.e. 
\begin{equation}\label{eq: sparse solution}
   \bar{u}=\mathcal{U}(\bar{\mathbf{x}}, \bar{\lambda})= \sum^{\bar{N}}_{j=1} \bar{\lambda}^j \delta_{\bar{x}^j}, \quad \text{where} \quad (\bar{\mathbf{x}}, \bar{\lambda}) \in \argmin_{\mathbf{x}\in \Omega^{\bar{N}},\lambda \in \R^{\bar{N}}} \left\lbrack F(K \mathcal{U}(\mathbf{x},\lambda))+ \alpha |\lambda|_{\ell_1} \right \rbrack,
\end{equation}
which, e.g., follows from convex representer theorems, \cite{Brediessparse, Chambolle}, if $Y$ is finite-dimensional, or which can often be deduced from properties of the optimal dual variable associated with \eqref{eq: problem setting} via its first-order optimality conditions. Noting that  \eqref{eq: problem setting} is convex, albeit at the cost of working in the infinite-dimensional space $\Mm$, this approach has received tremendous attention in a variety of fields, ranging from super-resolution approaches in signal denoising, to optimal control and related inverse problems as well as machine learning applications and system identification. For a non-exhaustive overview of related work, we refer, e.g., to \cite{PieperKonstantin2021Lcoa, Flinth25,Huynh,Duvalsource} and the references mentioned therein.

Naturally, these observations suggest to exploit the expected, finite-dimensional parametrization of the sought-after solution in the design of numerical methods for Problem \eqref{eq: problem setting}. In this regard, our interest lies in \text{greedy point insertion} algorithms, such as the Primal-Dual-Active Point method of \cite{PieperKonstantin2021Lcoa}, or the Sliding Frank-Wolfe ansatz of \cite{DenoyelleQuentin2020TsFa}. Loosely speaking, these alternate between the update of a sparse iterate $u_k$ and that of a finite, ordered set $\mathcal{A}_k$, the \textit{active set}, comprising its support points, i.e. 
\begin{equation*}
    u_k= \mathcal{U}(\mathbf{x}_k, \lambda_k)=\sum^{N_k}_{j=1} \lambda_k^j \delta_{x_k^j} , \quad \mathcal{A}_k= \{x^j_k\}^{N_k}_{j=1}
\end{equation*}
for some $(\mathbf{x}_k, \lambda_k) \in \Omega^{N_k} \times \R^{N_k} $. More in detail, it greedily adds points,  
\begin{equation*}
    \mathcal{A}_{k,+}= \mathcal{A}_k \cup\{\widehat{x}_k\}, \quad \widehat{x}_k \in \argmax_{x \in \Omega} |p_k| , \quad p_k=-K_* \nabla F(Ku_k),
\end{equation*}
and then either performs convex \textit{coefficient minimization}
\begin{equation*}
    u_{k+1}=\mathcal{U}(\mathbf{x}_{k,+}, \lambda_{k,+}), \quad \text{where} \quad \lambda_{k,+} \in \argmin_{\lambda \in \R^{\# \mathcal{A}_{k,+}}} J(\mathcal{U}(\mathbf{x}^{k,+}, \lambda))
\end{equation*}
and the fixed positions $\mathbf{x}^{k,+}$ correspond to the elements of $\mathcal{A}_{k,+}$, or \textit{sliding}, i.e. optimizing both coefficients and positions 
\begin{equation*}
    u_{k+1}=\mathcal{U}(\mathbf{x}_{k,+}, \lambda_{k,+}), \quad \text{where} \quad (\mathbf{x}_{k,+}, \lambda_{k,+}) \in \argmin_{\mathbf{x}\in \Omega^{\# \mathcal{A}_{k,+}},\lambda \in \R^{\# \mathcal{A}_{k,+}}} \left\lbrack F(K \mathcal{U}(\mathbf{x},\lambda))+ \alpha |\lambda|_{\ell_1} \right \rbrack . 
\end{equation*}
Afterwards, $\mathcal{A}_{k,+}$ can be pruned, removing all elements that were assigned zero coefficients.

Conceptually, these methods can be interpreted as accelerated variants of a \textit{generalized conditional gradient method} (\texttt{GCG}), \cite{Mine,BrediesKristian2009Agcg,KunischGCG},
\begin{equation*}
    u_{k,+}=(1-\eta_k)u_k+ \eta_k v_k, \quad v_k\in\argmin_{v\in\Mm}\left[-\langle p_k,v\rangle+\alpha\mnorm{v}\right], \quad \eta_k \in [0,1],
\end{equation*}
applied to the surrogate problem
\begin{equation*}
    \min_{u\in\mathcal{M}(\Omega)}J(u) \quad \text{s.t.} \quad \mnorm{u} \leq M,
\end{equation*}
where $M>0$ is a large enough constant, noting that a suitable direction $v_k$ can be computed from $\widehat{x}_k$, see Section \ref{section:lazification}. While \texttt{GCG} is known to converge globally at a sublinear rate, a property which is also passed on to its accelerated variants, the latter exhibit a substantially improved asymptotic convergence behavior, provided that the optimal solution to Problem \eqref{eq: problem setting} is of the form \eqref{eq: sparse solution} and the associated dual variable $\bar{p}=-K_* \nabla F(K \bar{u})$ in Problem \eqref{eq: problem setting} satisfies additional strict complementarity and non-degeneracy assumptions on its curvature, in particular
\begin{equation*}
    \{\,x \in \Omega\;|\;|\bar{p}(x)|=\alpha\,\}= \{\bar{x}^j\}^{\bar{N}}_{j=1}, \quad -\sign(\bar{\lambda}^j)\nabla^2 \bar{p}(\bar{x}^j) \geq_L \theta \operatorname{Id}, \quad j=1, \dots, \bar{N}
\end{equation*}
for some $\theta>0$. However, from a practical perspective, all of these desirable properties, i.e. sparse iterates and fast convergence, are achieved at the cost of computationally expensive substeps. First and foremost, updating $\mathcal{A}_k$ requires the global maximization of the generally nonconcave function~$|p_k|$. Similarly, sliding leads to a nonconvex, nonsmooth minimization problem. Second, while the coefficient minimization problem is convex, the theoretical results rely on its exact resolution, raising the question whether these can still be ensured in practice, where inexactness is unavoidable.

\paragraph{Contribution} \label{subsec:contribution}

In the present paper, we aim to alleviate the computational complexity associated with greedy point insertion while maintaining the improved convergence behavior of its accelerated variants. For this purpose, we consider \textit{lazy} updates $\widehat{x}_k$ of the active set $\mathcal{A}_k$, as well as a relaxation of the weight-position update problems. In this context, lazy updates, in contrast to inexact or approximate maximization, do not require knowledge of the suboptimality 
\begin{equation*}
  |p_k(\widehat{x}_k)|-\max_{x\in \Omega} |p_k(x)|,  
\end{equation*}
but merely assume that $|p_k(\widehat{x}_k)|$ is large enough, quantified by an adaptive tolerance. While updates of the latter still rely on exact maximization of $|p_k|$, the lazy approach greatly reduces their number, leading to significant speed-ups.

Our contributions are threefold:
\begin{itemize}
    \item Similar to earlier approaches, we build upon the interpretation of greedy point insertion as an acceleration of \texttt{GCG}. For this purpose, we introduce a lazy variant of the latter (\texttt{LGCG}), Algorithm \ref{alg: sublinear parameter-free}, based on the template provided by \cite{BraunGábor2016LCGA} and prove its global, sublinear convergence, see Theorem \ref{theorem: sublinear}. As for exact updates, this result carries over to its accelerated versions and guarantees that these reach a neighborhood of the minimizer in which faster convergence rates can be proven. 
    \item We then turn to a lazified version (\texttt{LPDAP}) of $\texttt{PDAP}$, Algorithm \ref{alg: local-global}, and prove its asymptotic, linear convergence in Theorem \ref{thm:local linear}. From a practical perspective, the new algorithm compares the descent achieved by \texttt{LGCG} steps with a local update mechanism, Algorithm \ref{alg: LSIStep}, reminiscent of the theoretical construction in \cite{PieperKonstantin2021Lcoa}. The better of both is then refined by an inexact coefficient update, which is controlled by a cheaply computable error measure. Our analysis critically relies on the clustering of the active set $\A_k$ around the support of the minimizer. In the absence of exact coefficient minimization, this is achieved by incorporating \textit{drop steps}, see Algorithm \ref{alg: drop step}, which provably remove points far away from the optimal support.
    \item Finally, we combine the \texttt{LGCG} approach with the sliding Frank-Wolfe philosophy, \cite{DenoyelleQuentin2020TsFa}. Exploiting global \texttt{LGCG} convergence, we replace the exact solution of the weight-position minimization problem by running a Newton-like method and interpret \texttt{LGCG} as a globalization approach. Regularly comparing the Newton progress to the per-iteration descent promised by \texttt{LGCG} as well as incorporating clustering steps, the proposed Algorithm \ref{alg: local merging} (\texttt{NLGCG}) eventually identifies the correct number of support points and always accepts the Newton step. Hence, new and improved convergence results on the infinite-dimensional level follow from classical finite-dimensional arguments, see Theorem \ref{thm:quadconvergence}.      
\end{itemize}
Our theoretical results are confirmed by numerical experiments, which, while simple, emphasize the main benefits of the lazy paradigm.  

\paragraph{Related work \& limitations} \label{subsec:related}
Conditional gradient methods with inexact linear minimization have been considered, e.g., in \cite{juliensvm,pmlr-v28-jaggi13, dunnopen}. A transfer to \texttt{GCG}-like methods can be found in \cite{yusparsity}. In contrast, we are not aware of comparable extensions of the lazy paradigm despite the significant interest it has attracted, \cite{BraunGábor2016LCGA,Braunblending,LanPokutta2017}.  

To the best of our knowledge, (accelerated) \texttt{GCG}-like methods for problems of the form were first considered in \cite{BrediesInverseroblems}, albeit without improved convergence guarantees beyond the global, sublinear rate. However, we also mention the intricate connection to the classical Federov-Wynn algorithm in the context of optimal design of experiments, \cite{fedorov, wynn}. The subsequent works \cite{Flinth,PieperKonstantin2021Lcoa} provide first asymptotic linear rates for acceleration by exact coefficient minimization, given the aforementioned structural assumptions on the optimal dual variable. In \cite{wachsmuthwalter}, the latter are related to no-gap second-order conditions and local quadratic growth w.r.t. the Kantorovich-Rubinstein norm. For finite-dimensional $Y$, the manuscript \cite{Flinth} exploits the connection between accelerated \texttt{GCG} and exchange-type methods, \cite{hettich}, applied to the predual problem, which is constituted by a semi-infinite program as pointed out in \cite{Eftekhari}. Variants allowing for point moving also go back to \cite{BrediesInverseroblems} and encompass, e.g., the alternating descent algorithm, \cite{Schiebinger}, the hybrid approach in \cite{Flinth}, or the sliding Frank-Wolfe ansatz, \cite{DenoyelleQuentin2020TsFa}, all of which provide finite-step convergence. An extension for inverse problems with Poisson noise, albeit without fast convergence results, is discussed in~\cite{Calatroni25}. 

Common to all of these approaches is that the proofs of improved convergence behavior critically depend on the exact maximization of the dual variable as well as the computation of critical points in the arising subproblems, leveraging information provided by the respective optimality conditions. To the best of our knowledge, the only ansatz relaxing these requirements is found in \cite{Flinth25}, where the authors replace $\Omega$ by a finite, adaptively refined grid. However, in contrast to the present work, linear convergence guarantees still require exact coefficient minimization. For the treatment of inexactness in semi-infinite programming, we refer, e.g., to \cite{oustry}, which considers a blackbox oracle guaranteeing a multiplicative error estimate, or \cite{mitsos17} as well as the related literature discussed therein.   

For completeness, we also mention philosophically different approaches based on overparametrization, \cite{ChizatLénaïc2022Soom}, trading off small support sizes for simple closed-form update steps, as well as prox-like methods, \cite{Tuomo1,Tuomo2}, which are able to deal with inexactness but so far lack improved convergence results.

The \texttt{LPDAP} method presented in this manuscript is directly inspired by the constructions employed in the linear convergence proofs of \cite{PieperKonstantin2021Lcoa}. Similarly, \texttt{NLGCG} is closely related to the hybrid approach of \cite{Flinth} but does neither require the computation of all local maximizers of $|p_k|$ to update the active set, nor exact coefficient minimization in order to achieve improved convergence rates.

While promising, the proposed lazy ansatz is of course not without limitations. First and foremost, lazy \texttt{GCG} steps do not fully remove the need for exact maximization of the dual variable, since the latter is occasionally required to update the lazy threshold. On the one hand, for \texttt{LPDAP}, our experiments suggest that these exact updates happen in regular intervals, but their overall number is small compared to the original method from \cite{PieperKonstantin2021Lcoa}, leading to a significant speed-up in practice. On the other hand, for \texttt{NLGCG}, exact updates predominantly happen in the asymptotic regime, i.e., once the correct number of support points is identified and the algorithm exhibits the local quadratic convergence behavior of Newton's method. In this case, our analysis suggests that the global maximizers of $|p_k|$ lie in the vicinity of the active set $\A_k$, which alleviates the exact update tremendously by providing a good warmstart.

Second, the presented algorithms heavily rely on hyperparameters that estimate problem-specific constants such as Lipschitz and curvature parameters, as well as the separation distance between optimal points. However, we emphasize that a parameter-free, adaptive version can be analyzed mutatis mutandis at the cost of additional technicalities in the proofs and computational effort to estimate relevant quantities on the fly. While we do not pursue this route in this paper in order to strike a balance between readability and technical details, the adaptive algorithm will be presented in a follow-up paper, together with more challenging numerical experiments.

\paragraph{Outline} \label{subsec:outline}

This paper is structured as follows. After introducing the relevant notation in Section~\ref{sec:notation}, we state the problem setting, the necessary assumptions, and use them to derive immediate properties of primal and dual variables in Section~\ref{sec:optimality}. In  Sections~\ref{section:lazification}, \ref{section:lpdap}, and \ref{section: Newton}, we present the \texttt{LGCG}, \texttt{LPDAP}, and \texttt{NLGCG} algorithms, respectively, and derive their convergence properties. Lastly, in Section~\ref{section: numerics}, we discuss a numerical implementation of the algorithms in the settings of PDE-constrained optimization and signal processing. We analyze the observed convergence behavior and compare it to the theory.

\section{Notation} \label{sec:notation}
Throughout the following, let~$\Omega \subset \R^d $,~$d \geq 1$, be a compact set and let $Y$ be a Hilbert space with inner product $(\cdot,\cdot)_Y$. The associated induced norm on $Y$ is denoted by $\Vert\cdot\Vert_Y= \sqrt{(\cdot,\cdot)_Y}$, while $\Vert\cdot\Vert$ refers to the euclidean norm on $\R^n$, $n >1$.

For a set $\Omega'\subseteq\Omega$ let $\C(\bar \Omega')$ and $\C^{0,\nu}(\bar \Omega')$ denote the space of continuous and $\nu$-Hölder continuous functions on $\bar \Omega'$. We equip $\C(\bar \Omega')$ with the canonical norm $\|\cdot\|_{\C(\bar \Omega')}$. Moreover, if $\Omega'$ is open, we denote the spaces of $n$-times continuously differentiable functions on $\Omega'$ whose derivatives can be continuously extended to $\bar{\Omega}'$ by $\C^n(\bar \Omega')$. The spaces $\C^{n,\nu}(\bar \Omega')$ of functions with $\nu$-Hölder continuous $n$-th derivative are defined analogously. In both cases, the respective spaces are equipped with the canonical norm. Mutatis mutandis, we define the corresponding spaces $\C(\bar \Omega';H)$, $\C^{0,\nu}(\bar \Omega';H)$, $\C^{n}(\bar \Omega';H)$, and $\C^{n,\nu}(\bar \Omega';H)$ for functions taking values in a separable Hilbert space $H$.

Abbreviating $\C\coloneqq\C(\Omega)$ and $\|\cdot\|_{\C} \coloneqq\|\cdot\|_{\C( \Omega)}$, we introduce the space of Radon measures $\Mm$ on $\Omega$ as the topological dual space of $\C$ with duality pairing $\dual{\cdot}{\cdot}$ and induced norm
\begin{equation}
    \dual{\varphi}{u}=\int_\Omega \varphi(x)~\mathrm{d}u(x)\quad\text{for all}\quad\varphi\in\C,  u\in\Mm, \quad  \mnorm{u}= \sup_{\|\varphi\|_{\C}\leq 1} \dual{\varphi}{u},
\end{equation}
making it a Banach space. The \textit{support} of a measure $u \in \Mm$ is denoted by $\supp(u)$. Given $x \in \Omega$, $\delta_x$ denotes the associated Dirac-Delta functional, i.e. $\dual{\varphi}{\delta_x}=\varphi(x)$ for all $\varphi \in \C$. Throughout this paper, we call $u \in \Mm$ \textit{sparse} if there is a finite, ordered set of distinct points $\A_u=\{x^j\}^N_{j=1}$  as well as nonzero coefficients $\{\lambda^j\}^N_{j=1}$  such that
\begin{equation*}
    u= \sum^N_{j=1} \lambda^j \delta_{x^j}, \quad \text{where} \quad \A_u= \supp(u), \quad \mnorm{u}= |\lambda|_{\ell_1}.
\end{equation*}
For a finite set $\A$ denote by $\M{\A}$ the linear subspace of sparse measures $u$ with $\A_u \subset \A$.
 
Given a Borel set $\Omega' \subset \Omega $, the restriction of $u \in \Mm$ to $\Omega'$ is denoted by $u\mres\Omega'= u(\cdot \cap \Omega')$.

Throughout this paper, sequences are written as indexed elements inside parentheses $\seq{\cdot}$, where the index is not further specified unless necessary. We also write $( \cdot )_+ \coloneqq \max\{ \cdot ,0\}$. Finally, $B_R(\bar{x})$ denotes the (open) ball of radius $R>0$ around $\bar{x} \in \R^d$.

\section{A primer on sparse minimization problems} \label{sec:optimality}

In the following two sections, we collect pertinent results on minimization problems of the form~\eqref{eq: problem setting}
\begin{equation}
    \min_{u\in\Mm}J(u)= \left \lbrack F(Ku)+\alpha\mnorm{u} \right \rbrack, \quad \text{where} \quad Ku=\int_\Omega \kappa(x)~\mathrm{d}u(x).
\end{equation}
The following standing assumptions are made throughout the paper:
\begin{assumption} \label{ass:functions}
    We assume that:
    \begin{enumerate}[label=\textbf{A\arabic*}]
       \item The kernel~$\kappa \colon \Omega \to Y$ satisfies~$\kappa \in \mathcal{C}^{0,\nu}(\Omega;Y)$ for some~$\nu >0$.
        \item The diligence measure $F:Y\rightarrow\mathbb{R}_+$ is strictly convex and continuously Fr\'echet differentiable with gradient~$\nabla F \colon Y \to Y $.
        \item There is an $L_{\nabla F}>0$ such that
        \begin{align*}
            \|\nabla F(y_1)-\nabla F(y_2)\|_Y \leq L_{\nabla F} \|y_1-y_2\|_Y \quad \text{for all} \quad y_1, y_2 \in Y.
        \end{align*}
    \end{enumerate}
    
\end{assumption}

The following lemma follows immediately, cf. also \cite[Lemma 3.2]{wachsmuthwalter}.

\begin{lemma} \label{lem:propofK}
    Let Assumption~\ref{ass:functions} hold. Then the operator~$K \in \mathcal{L}(\Mm;Y)$ is weak*-to-strong continuous. Moreover, we have
    \begin{align*}
    (Ku, y)_Y = \langle K_* y, u  \rangle \quad \text{for all}\quad u \in \Mm,~y \in Y    ,
    \end{align*}
    where~$K_* \in \L(Y;\C)$ satisfies
    \begin{align*}
        \lbrack K_* y(x) \rbrack(x)= (\kappa(x),  y)_Y \quad \text{for all}\quad x \in \Omega, y \in Y .
    \end{align*}
\end{lemma}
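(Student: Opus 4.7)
The plan is to proceed in three steps: first, verify that $K_*$, defined by the given pointwise formula, is a well-defined bounded operator $Y \to \mathcal{C}$; second, establish the adjoint identity relating $K$ and $K_*$ by interchanging the inner product with the vector-valued integral; and third, upgrade the resulting weak$^*$-to-weak continuity of $K$ to strong continuity via a compactness argument.

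For the first two steps, fix $y \in Y$. Cauchy--Schwarz together with Assumption A1 yields $|(\kappa(x),y)_Y - (\kappa(x'),y)_Y| \leq \|\kappa(x)-\kappa(x')\|_Y \|y\|_Y$, so $K_* y \in \mathcal{C}$ with $\|K_* y\|_{\mathcal{C}} \leq \|\kappa\|_{\mathcal{C}(\Omega;Y)} \|y\|_Y$; linearity in $y$ is immediate, giving $K_* \in \L(Y,\mathcal{C})$. Next, the integral $Ku = \int_\Omega \kappa(x) \dd u(x)$ is well-defined as a Bochner-type integral against the finite signed measure $u$, since $\kappa$ is continuous and bounded on the compact set $\Omega$. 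The bounded linear functional $(\cdot,y)_Y$ commutes with this vector-valued integral, so
\begin{align*}
(Ku,y)_Y = \int_\Omega (\kappa(x),y)_Y \dd u(x) = \langle K_* y, u \rangle
\end{align*}
for all $u \in \Mm$ and $y \in Y$. This identifies $K$ as the Banach-space adjoint of $K_*$ (after identifying $Y^* \cong Y$), so $K \in \L(\Mm,Y)$ and $K$ is automatically weak$^*$-to-weak continuous.

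The main obstacle is upgrading to strong continuity. I would argue that $K$ is a compact operator: since $\Omega$ is compact and $\kappa$ is continuous, $\kappa(\Omega)$ is norm-compact in $Y$, whence its closed absolutely convex hull $C \subset Y$ is also compact by a standard Mazur-type argument in Banach spaces. For any atomic measure $u = \sum_j \lambda_j \delta_{x_j}$ with $\sum_j |\lambda_j| \leq 1$, one directly computes $Ku = \sum_j \lambda_j \kappa(x_j) \in C$. Since such atomic measures are weak$^*$-dense in the unit ball of $\Mm$ and $K$ is weak$^*$-to-weak continuous, every $Ku$ with $\|u\|_{\Mm} \leq 1$ lies in the weak closure of $C$, which coincides with $C$ itself by Mazur's theorem. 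Thus $K$ sends bounded sets to relatively norm-compact sets. Finally, if $u_n \wksto u$ in $\Mm$, then $\seq{u_n}$ is norm-bounded by uniform boundedness, $\seq{Ku_n}$ is relatively compact in $Y$, and every subsequential norm-limit must coincide with the already-established weak limit $Ku$; hence the whole sequence converges strongly to $Ku$, as required.
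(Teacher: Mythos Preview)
Your proof is correct. The paper itself does not give a proof of this lemma; it simply states that the result ``follows immediately'' and refers to \cite[Lemma~3.2]{wachsmuthwalter}. Your argument fills in exactly the kind of details one would expect: the adjoint identity via commuting $(\cdot,y)_Y$ with the Bochner integral, and the upgrade from weak$^*$-to-weak to weak$^*$-to-strong continuity via compactness of $K$ (using that $\kappa(\Omega)$ is norm-compact and hence so is its closed absolutely convex hull).

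One small remark: the phrase ``weak$^*$-to-strong continuous'' here should be read as \emph{sequential} continuity (equivalently, continuity on bounded sets), which is what you prove and what the paper actually uses throughout (e.g.\ in the proof of Proposition~\ref{prop: sublevel general}). Full topological weak$^*$-to-norm continuity would force $K$ to have finite rank, which is not assumed. Your subsequence-plus-uniqueness-of-limit argument establishes precisely the sequential statement, so nothing is missing.
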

Note that Lemma~\ref{lem:propofK}, together with the differentiability requirements in Assumption~\ref{ass:functions}, guarantees that~$f= F \circ K$ is Fr\'echet-differentiable and the directional derivative in a direction~$\delta u \in \Mm$ satisfies   
\begin{align*}
    f'(u)(\delta u)=\langle K_* \nabla F(K u), \delta u  \rangle \quad \text{for all} \quad u \in \Mm.
\end{align*}

Furthermore, the weak* lower semicontinuity of $\mnorm{\cdot}$ also implies that $J$ is weak* lower semicontinuous.

\paragraph{Existence of minimizers \& first order optimality conditions} \label{para:existenceandopt}

Lemma \ref{lem:propofK}, together with Assumption~\ref{ass:functions}, allows to conclude the existence of minimizers to~\eqref{eq: problem setting}, as well as the derivation of usable first-order necessary and sufficient conditions. While these are crucial for the remainder of the paper, their proofs are standard and are thus omitted for the sake of brevity.  
\begin{proposition} \label{prop:existence}
    There exists at least one solution~$\bar{u} \in \Mm$ to Problem~\eqref{eq: problem setting} and for two minimizers~$\Bar{u}_1, \bar{u}_2 \in \Mm$ there holds~$K\bar{u}_1=K\bar{u}_2$. Moreover, for every~$u \in \Mm$, the sublevel set
    \begin{align*}
        E_{J}(u)=\left\{ v \in \Mm\;|\;J(v)\leq J(u) \right\}
    \end{align*}
    is weak*-compact.
\end{proposition}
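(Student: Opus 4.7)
The plan is to follow the direct method of the calculus of variations: the three claims will be obtained in the logical order (c) $\Rightarrow$ (a), with (b) derived separately from strict convexity of $F$. The only real ingredients are the coercivity of $J$ with respect to $\mnorm{\cdot}$, the Banach--Alaoglu theorem applied to the predual $\mathcal{C}$, and the weak*-lower-semicontinuity of $J$ noted just before the statement.

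For the weak*-compactness of $E_J(u)$, first I would establish boundedness. Since $F \geq 0$ (it maps into $\mathbb{R}_+$ by Assumption A2), any $v \in E_J(u)$ satisfies $\alpha \mnorm{v} \leq J(v) \leq J(u)$, so $E_J(u)$ is contained in a closed ball of $\Mm$. Because $\Omega$ is compact metric, $\mathcal{C}= \mathcal{C}(\Omega)$ is separable, hence norm-bounded subsets of $\Mm = \mathcal{C}^*$ are weak*-metrizable and weak*-precompact by Banach--Alaoglu. Weak*-closedness then follows from $J$ being weak*-lower-semicontinuous: the weak*-continuity of $K$ from Lemma~\ref{lem:propofK} combined with the continuity of $F$ gives weak*-continuity of $F \circ K$, and weak*-l.s.c.\ of $\mnorm{\cdot}$ is classical; summing yields the desired l.s.c.\ of $J$, so every weak*-limit of a net in $E_J(u)$ still lies in $E_J(u)$.

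The existence claim (a) is then immediate. I would pick any $u_0 \in \Mm$ (e.g.\ $u_0 = 0$), consider a minimizing sequence $\seq{u_n} \subset E_J(u_0)$ and extract a weak*-convergent subsequence $u_{n_k} \wksto \bar{u}$ by the just-established weak*-compactness. Weak*-l.s.c.\ of $J$ then gives
\begin{equation*}
    J(\bar u) \leq \liminf_{k\to\infty} J(u_{n_k}) = \inf_{u \in \Mm} J(u),
\end{equation*}
so $\bar u$ is a minimizer.

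For (b), the fact that two minimizers yield the same image under $K$ relies on the strict convexity of $F$ built into Assumption A2. If $\bar u_1, \bar u_2$ both attain the minimum $\bar J$, then convexity of $J$ forces $J(\tfrac12 \bar u_1 + \tfrac12 \bar u_2) = \bar J$, while
\begin{equation*}
    J\!\left(\tfrac12 \bar u_1 + \tfrac12 \bar u_2\right) \leq F\!\left(\tfrac12 K\bar u_1 + \tfrac12 K\bar u_2\right) + \tfrac{\alpha}{2}\mnorm{\bar u_1} + \tfrac{\alpha}{2}\mnorm{\bar u_2},
\end{equation*}
and strict convexity of $F$ would produce a strict inequality with the average $\tfrac12 J(\bar u_1) + \tfrac12 J(\bar u_2) = \bar J$ unless $K\bar u_1 = K \bar u_2$. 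Hence equality of the images is forced. I expect no real obstacle here; the only point requiring mild care is justifying weak*-metrizability to replace nets by sequences, which is standard given the separability of $\mathcal{C}(\Omega)$.
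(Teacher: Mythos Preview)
Your argument is correct and is precisely the standard direct-method proof the paper alludes to when it writes that ``their proofs are standard and are thus omitted for the sake of brevity.'' There is nothing to compare against, and each of your three steps (coercivity plus Banach--Alaoglu and weak*-l.s.c.\ for the sublevel sets, extraction of a weak*-convergent minimizing subsequence for existence, and the strict convexity contradiction for $K\bar u_1=K\bar u_2$) is exactly what the authors have in mind.
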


In view of this, we define the \textit{residual}
\begin{align*}
 r_J(u)\coloneqq J(u)- \min_{v \in \Mm}J(v)   
\end{align*}
of a measure $u\in\Mm$. Furthermore, we refer to~$\bar{y}=K \bar{u}$ as the unique \textit{optimal observation} associated with Problem~\eqref{eq: problem setting}. For~$u \in \Mm$, we further define the associated \textit{dual variable} as
\begin{align*}
    p_u=-K_* \nabla F(K u) \in \C .
\end{align*}

\begin{proposition} \label{prop:firstorder}
Let~$\bar{u} \in \Mm$ with~$J(\bar{u})<\infty$ be given and set~$\bar{p}=p_{\bar{u}}$. Then~$\bar{u}$ is a minimizer of Problem~\eqref{eq: problem setting} if and only if $\cnorm{\bar{p}} \leq \alpha$
and one of the following (equivalent) conditions holds:
\begin{itemize}
    \item There holds~$\langle \bar{p}, \bar{u} \rangle= \alpha \mnorm{\bar{u}}$.
    \item The Jordan-decomposition~$\bar{u}=\bar{u}_+ -\bar{u}_{-}$ satisfies
    \begin{align*}
        \supp(\bar{u}_{\pm}) \subset \left\{ x \in \Omega\;|\;\bar{p}(x)=\pm \alpha \right\}.
    \end{align*}
\end{itemize}
\end{proposition}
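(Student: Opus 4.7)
My plan is to base the proof on the convex subdifferential calculus, since $J$ is a sum of a differentiable convex term $f = F \circ K$ and a positively homogeneous convex term $\alpha\mnorm{\cdot}$.

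First, I would invoke convexity of $J$ to reduce the optimality characterization to $0 \in \partial J(\bar{u})$. By the sum rule for convex subdifferentials (applicable since $f$ is everywhere finite and Fr\'echet-differentiable, thanks to Assumption~\ref{ass:functions} and Lemma~\ref{lem:propofK}), together with the chain rule applied to $f = F \circ K$, this becomes
\begin{equation*}
    0 \in -\bar{p} + \alpha\,\partial\mnorm{\cdot}(\bar{u}), \qquad \text{i.e.} \qquad \bar{p}/\alpha \in \partial\mnorm{\cdot}(\bar{u}).
\end{equation*}
The next step is to identify the subdifferential of the Radon norm, viewed as a convex function on $\Mm = \C^*$. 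A standard duality argument (using that $\mnorm{\cdot}$ is the support functional of the unit ball in $\C$) yields $\varphi \in \partial\mnorm{\cdot}(\bar{u})$ if and only if $\cnorm{\varphi}\leq 1$ and $\langle \varphi, \bar{u}\rangle = \mnorm{\bar{u}}$. Substituting $\varphi = \bar{p}/\alpha$ gives the first characterization: $\cnorm{\bar{p}} \leq \alpha$ together with $\langle \bar{p}, \bar{u}\rangle = \alpha\mnorm{\bar{u}}$.

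To obtain the Jordan-decomposition reformulation, I would then exploit that, assuming $\cnorm{\bar p}\leq\alpha$ is already in force, and using $\bar{u} = \bar{u}_+ - \bar{u}_-$ together with $\mnorm{\bar{u}} = \bar{u}_+(\Omega) + \bar{u}_-(\Omega)$, the identity $\langle \bar{p}, \bar{u}\rangle = \alpha\mnorm{\bar{u}}$ rearranges to
\begin{equation*}
    \int_\Omega (\alpha - \bar{p}(x))\,\mathrm{d}\bar{u}_+(x) + \int_\Omega (\alpha + \bar{p}(x))\,\mathrm{d}\bar{u}_-(x) = 0.
\end{equation*}
Both integrands are nonnegative by the bound $\cnorm{\bar p}\leq\alpha$, and the measures $\bar{u}_\pm$ are nonnegative, so the sum vanishes if and only if $\bar{p}=\alpha$ on $\supp(\bar{u}_+)$ and $\bar{p}=-\alpha$ on $\supp(\bar{u}_-)$, which is the stated inclusion. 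The converse implication (Jordan condition $\Rightarrow$ the pairing identity) is immediate by integrating $\bar p$ against $\bar u_+$ and $\bar u_-$ separately.

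The only genuinely delicate point is justifying the subdifferential sum rule: here it is unproblematic because $f$ has full domain and is continuous, so the standard Moreau--Rockafellar theorem applies without any constraint qualification beyond finiteness of $J(\bar u)$. Identifying $\partial\mnorm{\cdot}(\bar u) \subset \C$ correctly (rather than landing in some larger bidual) uses that $\mnorm{\cdot}$ is precisely the dual norm to $\cnorm{\cdot}$ and that we consider the subdifferential in the duality pairing $\langle \C, \Mm\rangle$; this is standard and is the only conceptual subtlety worth flagging.
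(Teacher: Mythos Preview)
Your proposal is correct and follows exactly the standard route one would expect. The paper in fact omits the proof entirely, remarking that ``their proofs are standard and are thus omitted for the sake of brevity,'' so there is nothing to compare against beyond noting that your argument is precisely the kind of standard derivation the authors had in mind.
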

As a consequence of Proposition~\ref{prop:existence}, the optimal dual variable $\bar{p}=p_{\bar{u}}$ associated with Problem~\eqref{eq: problem setting} is unique as well.

\paragraph{Second order optimality conditions} \label{para:nondegen}

Throughout the paper, we will further require additional, well-established structural assumptions on \eqref{eq: problem setting}, which, on the one hand, ensure the existence of a unique, sparse minimizer $\bar{u}$ and, on the other hand, facilitate the derivation of fast convergence rates for the presented algorithms.

\begin{assumption} \label{ass:dualvariable}
    Assume that:
    \begin{enumerate}[label=\textbf{B\arabic*}]
        \item The functional $F \colon Y \to \R$ is strongly convex with a strong convexity constant $\gamma>0$ in a neighborhood $\mathcal{N}(\bar{y})$ of $\bar{y}$, i.e.
        \begin{equation*}
            (\nabla F(y_1)-\nabla F(y_2), y_1-y_2)_Y \geq \gamma \|y_1-y_2\|^2_Y \quad \text{for all} \quad y_1,y_2 \in \mathcal{N}(\bar y) .
        \end{equation*} \label{item:strongconv}
        \label{ass:strongconv}

        \item There exists a finite set $\bar{\A}=\{\bar{x}^j\}^{\bar{N}}_{j=1}$ of cardinality $\bar{N}$ with
        \begin{align*}
            \bar{\A}\subset \operatorname{int}(\Omega) , \quad  \bar{\A}=\{x\in\Omega\st\vert\bar{p}(x)\vert=\alpha=\cnorm{\bar{p}}\}.
        \end{align*}
        Moreover, the set $\{\kappa(x)\st x\in\bar{\A}\}$ is linearly independent. \label{ass:isolated}
    
        \item There is a radius $R'>0$ and a parameter $0<\sigma'<\alpha$ such that the kernel $\kappa$ satisfies
        \begin{equation*}
            \kappa\in\mathcal{C}^{2}(\overline{\Omega_{R'}};Y) , \quad \text{where} \quad \Omega_{R'}\coloneqq\bigcup^{\bar{N}}_{j=1}B_{R'}(\bar{x}^j) \subset \operatorname{int}(\Omega)
        \end{equation*}
        and there holds $\vert\bar{p}(x)\vert\leq \alpha-\sigma'$ for all $x\in\Omega \backslash \overline{\Omega_{R'}}$. Furthermore, it holds that
        \begin{equation}
        B_{2R'}(\bar{x}^j)\cap B_{2R'}(\bar{x}^i)=\emptyset 
        \end{equation}
        for all $i,j=1,\dots,\bar{N}$, $i\neq j$.
        \label{ass:regkernel}
    
        \item We have $-\sign(\bar{p}(\bar{x}^j))\nabla^2\bar{p}(\bar{x}^j) \geq_L \theta \operatorname{Id} $ for~$j=1, \dots, \bar{N}$ and some $\theta>0$. \label{ass:curvature}

        \item We have $\supp(\bar{u})= \bar{\mathcal{A}}$.\label{ass:complementary}
    \end{enumerate}
\end{assumption} 

We briefly comment on these assumptions; a more detailed account is given in \cite{PieperKonstantin2021Lcoa} as well as in \cite{wachsmuthwalter}, where the latter formulates a bridge between these assumptions and no-gap second order conditions as well as quadratic growth of $J$ w.r.t. certain unbalanced optimal transport distances.

Assumption~\eqref{ass:isolated} ensures that the solution $\bar{u}$ to \eqref{eq: problem setting} is unique and supported on the set $\bar{\mathcal{A}}$, cf. also Proposition \ref{prop:firstorder} as well as \cite[Proposition 3.8]{PieperKonstantin2021Lcoa}. As a consequence of \eqref{ass:complementary}, we have
\begin{equation*}
    \bar{u}=\sum^{\bar{N}}_{j=1} \bar{\lambda}^j \delta_{\bar{x}^j} \quad \text{for some} \quad \bar{\lambda}^j>0. 
\end{equation*}
The additional regularity provided by Assumption \eqref{ass:regkernel} further implies $K_* y \in \C \cap \C^{2}(\overline{\Omega_{R'}})$ for $y \in Y$ as well as the continuity of $K_* \colon Y \to \C \cap \C^{2} (\overline{\Omega_{R'}})$. In particular, we also have $\bar{p}\in\C \cap \C^{2}(\overline{\Omega_{R'}})$. Given that $\bar{\A} \subset \operatorname{int}(\Omega) $, we thus get $\nabla\bar{p}(\bar{x}^j)=0$.

Using Assumption~\ref{ass:dualvariable}, we can derive properties of measures $u$ contained in sublevel sets of the residual $r_J$. Given a $\Delta>0$, this set is defined as
\begin{equation}
    \E(\Delta)=\left\{u\in\Mm\st r_J(u)\leq\Delta \right\} .
\end{equation}

\begin{proposition}\label{prop: sublevel general}
    Let Assumption~\ref{ass:dualvariable} hold. Then there exists a constant $c_\Mm>0$ and a sublevel parameter $\Delta'>0$ such that for all $u\in\E(\Delta')$ the following properties hold:
    \begin{enumerate}[label=\textbf{C\arabic*}]
        \item\label{prop: sublevel y and nebla F}
        $\Vert Ku-K\bar{u}\Vert_Y\leq\sqrt{r_J(u)/\gamma}$ and $\Vert\nabla F(Ku)-\nabla F(K\bar{u})\Vert_Y\leq L\sqrt{r_J(u)/\gamma}$.

        \item\label{prop: sublevel p convergence} 
        $\cnorm{p_u-\bar{p}}\leq \cnorm{\kappa}L\sqrt{r_J(u)/\gamma}$ and $\Vert p_u-\bar{p}\Vert_{\cC^2(\overline{\Omega_{R'}})}\leq\Vert\kappa\Vert_{\cC^2(\overline{\Omega_{R'}};Y)}L\sqrt{r_J(u)/\gamma}$.

        \item\label{prop: sublevel norm convergence}
        
        $\left\vert\mnorm{u}-\mnorm{\bar{u}}\right\vert\leq c_\Mm\sqrt{r_J(u)}$.

        \item\label{prop: sublevel coef convergence}
        If $\A_{u}\subset\Omega_{R'}$, then $\mu_u^j\coloneqq \left\vert u(B_{R'}(\bar{x}^j))\right\vert\geq\frac{1}{2}\min_{j\leq \bar{N}}\vert\bar{\lambda}^j\vert\eqqcolon\bar{\mu}$ for all $j\leq \bar{N}$.
    \end{enumerate}
\end{proposition}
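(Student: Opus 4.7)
The plan is to establish the four estimates sequentially, shrinking the threshold $\Delta'$ at each step so that the previously obtained bounds can be exploited. For \textbf{C1}, the key identity is a Bregman-type lower bound on the residual. By Proposition~\ref{prop:firstorder}, $\bar{p}\in\partial(\alpha\mnorm{\cdot})(\bar u)$, so that $\alpha\mnorm{u}-\alpha\mnorm{\bar u}\geq \langle \bar{p},u-\bar u\rangle = -(\nabla F(\bar y),Ku-\bar y)_Y$. Substituting this into the definition of $r_J(u)$ yields
\begin{equation*}
r_J(u)\geq F(Ku)-F(\bar y)-(\nabla F(\bar y),Ku-\bar y)_Y=D_F(Ku,\bar y).
\end{equation*}
Since $F$ is globally convex with Lipschitz gradient, the descent lemma furnishes the upper bound $D_F(Ku,\bar y)\leq \tfrac{L_{\nabla F}}{2}\|Ku-\bar y\|_Y^2$, and choosing $\Delta'$ small enough therefore confines $Ku$ to the neighborhood $\mathcal{N}(\bar y)$ where $\nabla F$ is strongly monotone. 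Inside $\mathcal{N}(\bar y)$, monotonicity upgrades the Bregman lower bound to $\tfrac{\gamma}{2}\|Ku-\bar y\|_Y^2\leq r_J(u)$, which yields the first half of \textbf{C1}; Lipschitz continuity of $\nabla F$ transfers the estimate to $\nabla F(Ku)$.

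For \textbf{C2}, I use $p_u-\bar p=-K_*(\nabla F(Ku)-\nabla F(\bar y))$ together with the operator bound $\|K_* y\|_\C\leq\|\kappa\|_{\C(\Omega;Y)}\|y\|_Y$ from Lemma~\ref{lem:propofK}, and the analogous inequality $\|K_* y\|_{\C^2(\overline{\Omega_{R'}})}\leq\|\kappa\|_{\C^2(\overline{\Omega_{R'}};Y)}\|y\|_Y$, obtained by differentiating under the integral thanks to the enhanced kernel regularity in Assumption~\ref{ass:regkernel}. For \textbf{C3}, I rearrange $\alpha(\mnorm{u}-\mnorm{\bar u})=r_J(u)-[F(Ku)-F(\bar y)]$. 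The descent lemma together with \textbf{C1} gives $|F(Ku)-F(\bar y)|\leq\|\nabla F(\bar y)\|_Y\|Ku-\bar y\|_Y+\tfrac{L_{\nabla F}}{2}\|Ku-\bar y\|_Y^2\lesssim\sqrt{r_J(u)}$, which provides the upper one-sided estimate. The matching lower bound $\alpha(\mnorm{u}-\mnorm{\bar u})\geq\langle\bar p,u-\bar u\rangle\geq -\|\nabla F(\bar y)\|_Y\|Ku-\bar y\|_Y$ re-uses the subgradient inequality invoked in \textbf{C1}.

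The main obstacle is \textbf{C4}. The hypothesis $\A_u\subset\Omega_{R'}$, combined with the disjointness of the balls in Assumption~\ref{ass:regkernel}, lets me decompose $u=\sum_j u\mres B_{R'}(\bar x^j)$, and I set $\lambda_u^j:=u(B_{R'}(\bar x^j))$, so that $\mu_u^j=|\lambda_u^j|$. A zeroth-order Taylor expansion of $\kappa$ around each $\bar x^j$ yields
\begin{equation*}
Ku=\sum_{j=1}^{\bar N}\lambda_u^j\kappa(\bar x^j)+E,\qquad \|E\|_Y\lesssim R'^{\nu}\mnorm{u},
\end{equation*}
where the Hölder constant of $\kappa$ enters via Assumption~\ref{ass:functions} and $\mnorm{u}$ is uniformly bounded by \textbf{C3}. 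Subtracting $\bar y=\sum_j\bar\lambda^j\kappa(\bar x^j)$, inserting~\textbf{C1}, and invoking the linear independence of $\{\kappa(\bar x^j)\}_j$ from Assumption~\ref{ass:isolated}, one arrives at $\max_j|\lambda_u^j-\bar\lambda^j|\lesssim\sqrt{r_J(u)}+R'^{\nu}$. The delicate point is that the $R'^\nu$-term is not driven to zero by $r_J(u)$; I resolve this by noting that Assumption~\ref{ass:regkernel} remains valid upon shrinking $R'$, so the radius may be chosen small enough that this residual stays below $\bar\mu/2$, after which the reverse triangle inequality delivers $|\lambda_u^j|\geq|\bar\lambda^j|-\bar\mu\geq\bar\mu$. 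The crux is coordinating the three small parameters ($R'$, $\Delta'$, and the Taylor residual constant) so that all four statements hold simultaneously.
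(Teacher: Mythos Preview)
Your arguments for \textbf{C2} and \textbf{C3} are essentially correct and parallel the paper's. There are two genuine gaps, however.

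\emph{Confinement step in \textbf{C1}.} The descent lemma gives the \emph{upper} bound $D_F(Ku,\bar y)\le\tfrac{L_{\nabla F}}{2}\|Ku-\bar y\|_Y^2$, which cannot be used to conclude $\|Ku-\bar y\|_Y$ is small from $D_F\le\Delta'$. To force $Ku\in\mathcal N(\bar y)$ you need a \emph{lower} bound on $D_F$ outside $\mathcal N(\bar y)$, and the only global hypothesis available is strict (not strong) convexity of $F$. The paper closes this gap by a weak*-compactness contradiction argument: sublevel sets $E_J$ are weak*-compact, $K$ is weak*-to-strong continuous, and any sequence with $r_J(u_k)\to 0$ but $Ku_k\notin\mathcal N(\bar y)$ would, after passing to a subsequence, converge weak* to a minimizer with $K\bar u=\bar y$, contradicting $\|Ku_k-\bar y\|_Y\ge\epsilon$.

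\emph{The approach to \textbf{C4} does not prove the stated claim.} Your Taylor estimate yields $\max_j|\lambda_u^j-\bar\lambda^j|\lesssim\sqrt{r_J(u)}+c\,(R')^\nu\mnorm{u}$, and the second term is \emph{not} controlled by $\Delta'$: it depends on the fixed radius $R'$ from Assumption~\ref{ass:regkernel}. Your proposed fix, shrinking $R'$, changes the statement: the proposition is formulated for the given $R'$, and the hypothesis ``$\A_u\subset\Omega_{R'}$'' becomes strictly stronger when $R'$ is replaced by some $R''<R'$. (Indeed, the paper later needs \textbf{C4} precisely for measures whose support lies in $\Omega_{R'}$, not in a smaller set.) You would therefore be proving a weaker result. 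The paper avoids the quantitative remainder entirely by another weak*-compactness argument: assuming the conclusion fails along a sequence with $r_J(u_k)\to 0$ and $\A_{u_k}\subset\Omega_{R'}$, one extracts a weak*-convergent subsequence with limit $\bar u$, and then tests against a continuous function equal to $1$ on $B_{R'}(\bar x^j)$ and $0$ on the other balls to obtain $u_k(B_{R'}(\bar x^j))\to\bar\lambda^j$, contradicting $\mu_{u_k}^{j_k}<\bar\mu$.
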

\begin{proof}
    See Appendix~\ref{app:optimality}.
\end{proof}

The result \eqref{prop: sublevel p convergence} can be used to derive the following properties of $p_u$:

\begin{proposition}\label{prop: sublevel p}
    Let Assumption~\ref{ass:dualvariable} hold. Then there exists a radius $0<\tilde{R}\leq R'$ such that for all radii $R\in(0,\tilde{R})$ there exist parameters $0<\Delta(R)\leq\Delta'$ and $0<\sigma(R)\leq\sigma'$ such that all $u\in\E(\Delta(R))$ satisfy:
    \begin{enumerate}[label=\textbf{D\arabic*}]        
        \item\label{prop: p local sign}
        For all $j\leq \bar{N}$, the sign of $p_u$ on $B_R(\bar{x}^j)$ is constant and satisfies
        \begin{equation}
            \sign(p_u(x))=\sign(\bar{p}(\bar{x}^j))=\sign(\bar{\lambda}^j)\quad\text{for all}\quad x\in B_R(\bar{x}^j).
        \end{equation}

        \item\label{prop: p local max} 
        For all $j\leq \bar{N}$, $\vert p_u\vert$ has a unique local maximum $\widehat{x}_u^j$ on $B_R(\bar{x}^j)$ and it holds
        \begin{equation}
            | p_u(\widehat{x}_u^j)|-| p_u(x)|\leq 2R\Vert\nabla p_u(x)\Vert\ \quad \text{for all} \quad x \in B_{R}(\bar{x}^j).
        \end{equation}

        \item\label{prop: p curvature and growth}
        The curvature and quadratic growth conditions
        \begin{equation}
            -\sign(p_u(x))\nabla^2 p_u(x) \geq (\theta/4) \operatorname{Id}
        \end{equation}
        and
        \begin{equation}
            \vert p_u(\widehat{x}_u^j)\vert-\vert p_u(x)\vert\geq\frac{\theta}{8}\Vert\widehat{x}_u^j-x\Vert^2\quad\text{for all}\quad x\in B_R(\bar{x}^j)
        \end{equation}
        hold for all $j\leq \bar{N}$.

        \item\label{prop: p sigma}
        It holds that $\vert p_u(x)\vert\leq\alpha-\sigma(R)/2$ for all $x\in\Omega \backslash \overline{\Omega_R}$.
    \end{enumerate}
\end{proposition}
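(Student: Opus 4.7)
The plan is to leverage Proposition~\ref{prop: sublevel general}\eqref{prop: sublevel p convergence}, which provides the convergence $p_u \to \bar p$ in both $\C(\Omega)$ and $\C^2(\overline{\Omega_{R'}})$ at rate $\sqrt{r_J(u)}$, combined with the structural information on $\bar p$ from Assumption~\ref{ass:dualvariable}. I would first fix $\tilde R$ using only data of $\bar p$, and then calibrate $\sigma(R)$ and $\Delta(R)$ so that all four conclusions transfer from $\bar p$ to $p_u$ on $\E(\Delta(R))$ by a direct perturbation argument.

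Using the $\C^2$-continuity of $\bar p$ on $\overline{\Omega_{R'}}$ together with Assumptions~\eqref{ass:isolated} and \eqref{ass:curvature}, I would choose $\tilde R \in (0, R']$ so small that on each $B_{\tilde R}(\bar x^j)$ one has simultaneously $|\bar p(x)| \geq 3\alpha/4$ and $-\sign(\bar p(\bar x^j))\nabla^2 \bar p(x) \geq_L (\theta/2)\operatorname{Id}$. For a fixed $R \in (0, \tilde R)$, I would set
\begin{equation*}
\sigma(R) = \min\bigl\{\,\sigma',\ \alpha - \max_{x \in \overline{\Omega_{R'}} \setminus \Omega_R}|\bar p(x)|\,\bigr\},
\end{equation*}
which is strictly positive since $\bar\A$ is disjoint from the compact set $\overline{\Omega_{R'}} \setminus \Omega_R$; together with the last clause of Assumption~\eqref{ass:regkernel}, this yields $|\bar p(x)| \leq \alpha - \sigma(R)$ on all of $\Omega \setminus \overline{\Omega_R}$. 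Finally, using \eqref{prop: sublevel p convergence}, I would pick $\Delta(R) \leq \Delta'$ small enough that every $u \in \E(\Delta(R))$ satisfies both $\|p_u - \bar p\|_{\C} \leq \min\{\sigma(R)/2,\,\alpha/4\}$ and $\|p_u - \bar p\|_{\C^2(\overline{\Omega_{R'}})} \leq \theta/4$.

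With these calibrations, D1, the curvature bound in D3, and D4 all follow from the triangle inequality. Setting $s_j = \sign(\bar\lambda^j) = \sign(\bar p(\bar x^j))$, where the equality uses Proposition~\ref{prop:firstorder} together with Assumption~\eqref{ass:complementary}, the strong concavity of $s_j p_u$ on $B_R(\bar x^j)$ implied by the curvature bound yields uniqueness of any critical point. Existence of an interior critical point $\widehat x_u^j$ follows by further shrinking $\Delta(R)$ so that $\|\nabla p_u(\bar x^j)\| = \|\nabla p_u(\bar x^j) - \nabla \bar p(\bar x^j)\|$ is much smaller than $\theta R/2$; a standard strong-monotonicity argument then confines $\widehat x_u^j$ to $B_{R/2}(\bar x^j)$. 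The inequality in D2 is then the concavity estimate $s_j p_u(\widehat x_u^j) - s_j p_u(x) \leq s_j \nabla p_u(x) \cdot (\widehat x_u^j - x)$ combined with Cauchy--Schwarz and $\|\widehat x_u^j - x\| \leq 2R$; the quadratic growth in D3 follows from a Taylor expansion of $s_j p_u$ about $\widehat x_u^j$ using $\nabla p_u(\widehat x_u^j) = 0$ and the Hessian lower bound. The main delicate point is the confinement of $\widehat x_u^j$ strictly inside $B_R(\bar x^j)$ uniformly over $u \in \E(\Delta(R))$: driving $\|\nabla p_u - \nabla \bar p\|_{\C(\overline{\Omega_{R'}})}$ below a threshold proportional to $\theta R$ is precisely what forces $\Delta(R)$ to depend on $R$, whereas all other assertions only require an $R$-independent threshold on $\sqrt{r_J(u)}$.
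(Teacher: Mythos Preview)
Your proposal is correct and follows the natural perturbation route: freeze the geometry using $\bar p$ (pick $\tilde R$ so that the sign, the curvature lower bound, and the strict sub-$\alpha$ region are inherited from Assumption~\ref{ass:dualvariable} on $\overline{\Omega_{\tilde R}}$ and its complement), then use \eqref{prop: sublevel p convergence} to transfer everything to $p_u$ by choosing $\Delta(R)$ to control $\|p_u-\bar p\|_{\C}$ and $\|p_u-\bar p\|_{\C^2(\overline{\Omega_{R'}})}$. The treatment of D2 and the growth part of D3 via strong concavity of $s_j p_u$ is the right mechanism, and your observation that the only $R$-dependent constraint on $\Delta(R)$ comes from confining $\widehat x_u^j$ to the interior (through a bound of the form $\|\nabla p_u(\bar x^j)\|\lesssim \theta R$) is exactly the reason the statement is quantified the way it is.

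As for comparison: the paper does not actually prove this proposition. It omits the argument entirely and refers the reader to \cite[Corollary~5.11, Lemma~5.12]{PieperKonstantin2021Lcoa}. Your sketch therefore supplies what the paper leaves out; the approach you take is the same one underlying those referenced results, so there is no methodological divergence to discuss.
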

\begin{proof}
    For the sake of brevity, we omit a detailed proof and point out related results, \cite[Corollary 5.11]{PieperKonstantin2021Lcoa} and \cite[Lemma 5.12]{PieperKonstantin2021Lcoa}, in the literature. 
\end{proof}

For the remainder of this work, let us fix a tuple of parameters
\begin{equation}\label{eq: parameters}
    (\gamma, \theta, R, \sigma, L, C_K, C_{K'}) \in \R^7_{++} ,
\end{equation}
where $\gamma$ and $\theta$ satisfy \eqref{item:strongconv} and \eqref{ass:curvature} respectively, the radius $R$ is as in Proposition~\ref{prop: sublevel p} with corresponding parameter and $\sigma=\sigma(R)$, and $L$, $C_K$, and $C_{K'}$ satisfy
\begin{equation}
    L_{\nabla F}\leq L ,\quad \omegaynorm{\kappa}\leq C_K ,\quad\text{and}\quad\Vert\kappa\Vert_{\cC^1(\overline{\Omega_{R'}};Y)}\leq C_{K'} .
\end{equation}

\section{A lazified generalized conditional gradient method}\label{section:lazification}

As emphasized earlier, greedy point insertion algorithms are inherently related to the \texttt{GCG} method.
Starting from a sparse initial measure~$u_1$ and given an upper bound $M>0$ on the norm of elements in~$E_J(u_1)$, the latter approximates minimizers of Problem~\eqref{eq: problem setting} by iterating 
\begin{equation}\label{eq:partiallylinearizedinoursetting}
v_k \in \argmin_{v\in\Mm,\mnorm{v}\leq M}\left\lbrack\langle -p_k,v\rangle+\alpha\mnorm{v}\right \rbrack, \quad u_{k+1}=(1-\eta_k) u_k+ \eta_k v_k ,
\end{equation}
where $v_k$ is the \texttt{GCG} direction, $p_k \coloneqq p_{u_k}$, and $\eta_k \in [0,1]$ is an appropriately chosen step size. We define the \textit{dual gap functional} associated to this problem as
    \begin{equation*}
  \Phi(u) \coloneqq \max_{v\in\Mm, \mnorm{v}\leq M}  \varphi(u,v), \quad \text{where}\quad \varphi(u,v) \coloneqq \langle p_u,v-u\rangle+\alpha\mnorm{u}-\alpha\mnorm{v},
\end{equation*}
for~$u \in E_J(u_1)$. By construction, we have 
\begin{equation*}
   v_k \in  \argmax_{v\in\Mm, \mnorm{v} \leq M}\varphi(u_k,v)\quad \Leftrightarrow\quad   v_k \in \argmin_{v\in\Mm,\mnorm{v}\leq M}\left\lbrack\langle -p_k,v\rangle+\alpha\mnorm{v}\right \rbrack.
\end{equation*}

\begin{lemma}{\cite[Proposition 5.2]{PieperKonstantin2021Lcoa}} \label{lem:resandgap}
    For every~$u \in \Mm$, we have~$\Phi(u) \geq 0$ with equality if and only if~$u$ is a minimizer of~\eqref{eq: problem setting}. Moreover, there holds~$r_J(u)  \leq \Phi(u)$.
\end{lemma}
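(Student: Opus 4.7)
The plan is to establish the three claims in order—nonnegativity, the residual bound, and then the equivalence—by combining the definition of $\varphi$ with convexity of $f = F \circ K$ and the first-order characterization in Proposition~\ref{prop:firstorder}.

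Nonnegativity is essentially by design: the choice $v = u$ is admissible in the maximization defining $\Phi(u)$, since $u \in E_J(u_1)$ gives $\mnorm{u} \leq M$, and $\varphi(u,u) = 0$, so $\Phi(u) \geq 0$. For the residual bound $r_J(u) \leq \Phi(u)$, I would fix any minimizer $\bar{u}$ of \eqref{eq: problem setting}. Because $J(\bar{u}) \leq J(u_1)$, we have $\bar{u} \in E_J(u_1)$ and hence $\mnorm{\bar{u}} \leq M$, which makes $\bar{u}$ a feasible competitor in the restricted max. Convexity of $f$ together with the identity $f'(u)(\delta u) = -\langle p_u, \delta u\rangle$ from Lemma~\ref{lem:propofK} then gives $f(u) - f(\bar{u}) \leq \langle p_u, \bar{u} - u\rangle$, and adding $\alpha(\mnorm{u} - \mnorm{\bar{u}})$ on both sides yields $r_J(u) \leq \varphi(u, \bar{u}) \leq \Phi(u)$.

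The equivalence $\Phi(u) = 0 \Leftrightarrow u$ is a minimizer can then be read off from what we have. If $u$ is a minimizer, Proposition~\ref{prop:firstorder} gives $\cnorm{p_u} \leq \alpha$ and $\langle p_u, u\rangle = \alpha \mnorm{u}$; hence for every admissible $v$ one has $\varphi(u,v) = \langle p_u, v\rangle - \alpha \mnorm{v} \leq (\cnorm{p_u} - \alpha)\mnorm{v} \leq 0$, which combined with $\Phi(u) \geq 0$ forces $\Phi(u) = 0$. Conversely, $\Phi(u) = 0$ together with $0 \leq r_J(u) \leq \Phi(u)$ gives $r_J(u) = 0$, so $u$ is a minimizer.

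The only delicate point anywhere in the argument is ensuring that $\bar{u}$ is admissible in the restricted maximization defining $\Phi(u)$—which is precisely why $M$ is chosen to dominate the norms on $E_J(u_1)$. Beyond that, everything reduces to a routine application of convex duality and the first-order conditions, so I do not anticipate any real obstacle.
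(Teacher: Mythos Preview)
Your argument is correct and complete. The paper itself does not give a proof of this lemma; it simply cites \cite[Proposition 5.2]{PieperKonstantin2021Lcoa}, so there is no in-paper proof to compare against, and your self-contained argument via convexity of $F\circ K$ and Proposition~\ref{prop:firstorder} is exactly the standard one.
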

Furthermore, we introduce a family of parametrized Dirac-Delta functions
\begin{align*}
    v_u (x)=M  \sign(p_u(x))\delta_{x}\in\Mm \quad \text{for all} \quad x \in \Omega ,
\end{align*}
as well as the following explicit characterization of a \texttt{GCG} direction~$v_k$.

\begin{lemma}{\cite[Proposition 5.3]{PieperKonstantin2021Lcoa}} \label{lem:explicitDiracsolution}
Let $u \in E_J(u_1) $ and define
\begin{align}\label{eq:solutionofPhi}
    \widehat{v}=
    \begin{cases}
        0 & \cnorm{p_{u}}<\alpha\\
        v_u (\widehat{x}) &\text{ else }
    \end{cases} , \quad \text{where} \quad \widehat{x}\in \argmax_{x\in \Omega} |p_u (x)|.
\end{align}
Then we have
\begin{align} \label{def:lineargeneral}
    \widehat{v} \in \argmax_{v\in\mathcal{M}, \mnorm{v}\leq M}\varphi(u,v) , \quad    \Phi(u)&=M\left(\cnorm{p_u}-\alpha\right)_++\alpha\mnorm{u}-\langle p_u,u\rangle.
\end{align}
\end{lemma}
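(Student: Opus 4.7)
The plan is to recognize that the maximization defining $\Phi(u)$ is essentially a one-dimensional problem once we apply the duality pairing $|\langle p_u, v\rangle| \leq \cnorm{p_u}\mnorm{v}$, and then to verify that the candidate $\widehat{v}$ saturates every inequality in turn. First, I would rewrite
\begin{equation*}
    \varphi(u,v) = \langle p_u, v\rangle - \alpha\mnorm{v} + \bigl(\alpha\mnorm{u} - \langle p_u, u\rangle\bigr),
\end{equation*}
noting that the last bracket is independent of $v$. The problem thus reduces to maximizing $g(v) \coloneqq \langle p_u, v\rangle - \alpha\mnorm{v}$ over $\{v\in\Mm : \mnorm{v}\leq M\}$, and it suffices to prove
\begin{equation*}
    \sup_{\mnorm{v}\leq M} g(v) = M\bigl(\cnorm{p_u} - \alpha\bigr)_+,
\end{equation*}
with the supremum being attained at $\widehat{v}$.

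The upper bound follows immediately by the duality estimate $\langle p_u, v\rangle \leq \cnorm{p_u}\mnorm{v}$, which yields
\begin{equation*}
    g(v) \leq \bigl(\cnorm{p_u} - \alpha\bigr)\mnorm{v}.
\end{equation*}
If $\cnorm{p_u} < \alpha$, the right-hand side is nonpositive for every feasible $v$, with equality only for $v=0$, matching the first branch of the definition of $\widehat{v}$. Otherwise, the right-hand side is nonnegative and maximized by choosing $\mnorm{v} = M$, giving the upper bound $M(\cnorm{p_u} - \alpha)$.

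It remains to check attainment in the case $\cnorm{p_u} \geq \alpha$. Here $\widehat{v} = v_u(\widehat{x}) = M\sign(p_u(\widehat{x}))\delta_{\widehat{x}}$ is admissible since $\mnorm{\widehat{v}} = M$, and by construction of $\widehat{x}$ we have
\begin{equation*}
    \langle p_u, \widehat{v}\rangle = M\sign(p_u(\widehat{x}))p_u(\widehat{x}) = M|p_u(\widehat{x})| = M\cnorm{p_u},
\end{equation*}
so that $g(\widehat{v}) = M(\cnorm{p_u} - \alpha)$, matching the upper bound. Adding the constant term yields the stated formula for $\Phi(u)$. The argument is essentially routine; the only point that deserves care is the use of Assumption~\eqref{ass:functions} together with Lemma~\ref{lem:propofK} to guarantee that $p_u \in \C$ attains its maximum on the compact set $\Omega$, so that $\widehat{x}$ is well-defined and the Dirac-Delta $\widehat{v}$ makes sense.
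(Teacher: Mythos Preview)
Your argument is correct. The paper does not supply its own proof of this lemma; it simply cites \cite[Proposition~5.3]{PieperKonstantin2021Lcoa}, so your direct verification via the duality bound $\langle p_u,v\rangle\leq\cnorm{p_u}\mnorm{v}$ and the explicit evaluation at $\widehat{v}$ is exactly the kind of elementary computation the citation stands in for.
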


Since evaluating \eqref{eq:solutionofPhi} can be expensive, we propose a lazified method. That is, instead of maximizing~$\varphi(u_k,\cdot)$, we only require that the selected direction makes it exceed a certain threshold. The specific structure in Lemma~\ref{lem:explicitDiracsolution} motivates the following definition of a lazy \texttt{GCG} direction for the problem under consideration.

\begin{definition} \label{def:lazysolution}
Given a measure~$u \in \Moc $ as well as an $\varepsilon >0$, we call
\begin{align} \label{eq:lazysolution}
    v_{\varepsilon} \in \left\{ v_u(x)\st x \in \Omega \right\} \cup \{0\} \quad \text{with} \quad    \varphi(u,v_{\varepsilon})  \geq M \varepsilon
\end{align}
a \textit{lazy direction} or \textit{lazy solution} of~\eqref{def:lineargeneral} for~$u$ at tolerance~$\varepsilon$.
\end{definition}

The resulting \texttt{GCG} method, relying on lazy solutions with adaptive tolerances~$\varepsilon=\varepsilon_k>0$ as update directions, can be found in Algorithm~\ref{alg: sublinear parameter-free}.

We make several observations. First, lazifying the insertion step allows for greater flexibility in the way of choosing~$v_{u_k}(x)$. For example, a suitable candidate point~$x$ could be found as an intermediate iterate of an optimization algorithm applied to~$|p_k|$, but also by randomly sampling points on~$\Omega$. Similarly, promising points that have been visited in earlier iterations can be cached and checked immediately for lazy optimality in the sense of Definition~\ref{def:lazysolution} in subsequent steps.

Second, in contrast to ``exact'' \texttt{GCG} directions, see Lemma~\ref{lem:explicitDiracsolution}, lazy solutions might not exist. As a consequence, the following case distinction is necessary:   
\begin{itemize}
    \item[Case 1.] If we have found a lazy solution for~$u_k$ at tolerance~$\varepsilon_k$,  we employ it as a \texttt{GCG} update direction and keep~$\varepsilon_k$ unchanged for the next iteration.  We will refer to steps of this form as ``lazy calls'' (``positive’’ calls in \cite{BraunGábor2016LCGA}). In practice, notice that we can first check whether zero is a lazy solution before considering measures of the form~$v_{u_k}(x)$ in order to further decrease the computational effort.   
    \item[Case 2.] If there is no lazy solution at the given tolerance, we perform an ``exact call'' (``negative’’ call in \cite{BraunGábor2016LCGA}). We use the update direction provided by Lemma~\ref{lem:explicitDiracsolution} in the \texttt{GCG} step. We emphasize that the computation of the latter does not entail additional effort, since the verification of the absence of lazy solutions already requires the evaluation of a supremum of $|p_k(\cdot)|$ over $\Omega$. As a by-product, we also have access to the dual gap
    \begin{align*}
        \Phi(u_k)&=M\left(\cnorm{p_k}-\alpha\right)_++\alpha\mnorm{u_k}-\langle p_k,u_k\rangle ,
    \end{align*} 
    which we use to update the tolerance~$\varepsilon_{k+1}= \Phi(u_k)/(2M)$ for the next iteration.      
\end{itemize}

This logic is presented in Algorithm~\ref{alg: lgcg step}.

Finally, we stress that the \texttt{LGCG} step~$u_{k+}$ in Algorithm~\ref{alg: sublinear parameter-free} is interpreted as an intermediate step and we only assume that the choice of the next iterate~$u_{k+1}$ satisfies~$J(u_{k+1}) \leq J(u_{k+})$. While this allows for the particular choice of~$u_{k+1}=u_{k+}$, it opens the door for the acceleration schemes introduced in the following sections.

The remainder of this section is dedicated to the proof of a sublinear rate of convergence for Algorithm~\ref{alg: sublinear parameter-free}. 

\begin{algorithm}
\caption{\textup{\texttt{LGCGStep}}}\label{alg: lgcg step}
\KwIn{Measure $u$, threshold $\varepsilon$, constant $C$}
\KwOut{Updated measure $u_+$, update direction $v$, updated threshold $\e_+$}
Find a lazy solution~$v_{\varepsilon}$ of~\eqref{def:lineargeneral} for~$u$ at tolerance~$\varepsilon$\\
\eIf{\textup{lazy call}}{$\eta\leftarrow \min\left\{1,\frac{M\varepsilon}{C}\right\}, v\leftarrow v_{\varepsilon}, \varepsilon_{+}\leftarrow\varepsilon$}
{
Update
\begin{equation*}
v\leftarrow
\begin{cases}
    0 & \cnorm{p_u}<\alpha\\
     v_{u} (\widehat{x}) &\text{ else } 
\end{cases} , \quad \text{where} \quad \widehat{x}\in \argmax_{x\in \Omega} |p_u(x)|   
\end{equation*}\\
$\Phi(u)\leftarrow \varphi(u,v)$\\
$\eta\leftarrow \min\left\{1,\frac{\Phi(u)}{C}\right\}, \varepsilon_{+}\leftarrow \Phi(u)/(2M)$
}
$u_{+}\leftarrow(1-\eta)u+ \eta v$\\
\Return{$u_{+}, v, \varepsilon_{+}$}
\end{algorithm}

\begin{algorithm}
\caption{Lazified Generalized Conditional Gradient (\textup{\texttt{LGCG}})}
\label{alg: sublinear parameter-free}
\KwIn{Initial iterate $u_1$, initial threshold $\varepsilon_1$, constant $C=4LM^2C^2_K$}
\For{$k=1,2,\cdots$}{
$u_{k+}, v_k, \varepsilon_{k+1}\leftarrow\textup{\texttt{LGCGStep}}(u_k,\varepsilon_k,C)$\\
\If{$\varepsilon_{k+1}=0$}{Terminate with~$u_k$ a minimizer of \eqref{eq: problem setting}}
Find $u_{k+1}\in\Mm$ with $J(u_{k+1})\leq J(u_{k+})$\label{line: improve}
}
\end{algorithm}

We begin by showing a few useful properties of Algorithm~\ref{alg: lgcg step}.

\begin{lemma}\label{lemma: LGCG in Newton setting}
    Consider some measure $u\in\Mm$, a corresponding dual variable $p_u$, some threshold $\e$, and the constant $C=4LM^2C^2_K$. Let $(u_+, v, \e_+)$ be the output of~\textup{\texttt{LGCGStep}}$(u,\e,p_u,C)$. Then it holds that
    \begin{equation}\label{eq: LGCG descent}
        J(u_+)-J(u)\leq
        \begin{cases}
            -\frac{M^2\e_+^2}{2C} &,\quad M\e_+\leq C\\
            \frac{C}{2}-M\e_+ &,\quad\text{ else }
        \end{cases} .
    \end{equation}
    In particular, we have $J(u_+)\leq J(u)$. Furthermore, if $\e$ is such that $r_J(u)\leq 2M\e$, then it holds that
    \begin{equation}
        r_J(u_+)\leq r_J(u)\leq 2M\e_+\leq2M\e.
    \end{equation}
\end{lemma}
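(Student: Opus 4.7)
The plan is to first establish a universal descent inequality $J(u_+)-J(u)\le -\eta\,\varphi(u,v)+\tfrac{\eta^2 C}{2}$ using smoothness of $f = F\circ K$, and then to exhaust the branches of \texttt{LGCGStep} to turn this into the piecewise bound in the statement. Since $\nabla F$ is $L$-Lipschitz, $\|K(v-u)\|_Y \le C_K\mnorm{v-u} \le 2MC_K$ (using $\mnorm{u},\mnorm{v}\le M$, which is preserved by convex combinations since lazy and exact directions have $\mnorm{v}\in\{0,M\}$), and $f'(u) = -p_u$ by Lemma~\ref{lem:propofK}, the standard descent lemma combined with $\mnorm{u_+}\le(1-\eta)\mnorm{u}+\eta\mnorm{v}$ yields
\begin{equation*}
    J(u_+)-J(u) \le \eta\bigl[-\langle p_u, v-u\rangle + \alpha\mnorm{v}-\alpha\mnorm{u}\bigr] + \tfrac{\eta^2}{2}\cdot 4LM^2C_K^2 = -\eta\,\varphi(u,v) + \tfrac{\eta^2 C}{2}.
\end{equation*}

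Next I would carry out the case analysis. In a lazy call, $\varepsilon_+=\varepsilon$, the direction satisfies $\varphi(u,v)\ge M\varepsilon_+$ by Definition~\ref{def:lazysolution}, and $\eta = \min\{1, M\varepsilon_+/C\}$. Plugging these into the descent inequality and splitting on whether $\eta$ is clipped to $1$ reproduces the two branches of the claimed bound verbatim. In an exact call, $\varphi(u,v)=\Phi(u)=2M\varepsilon_+$ by construction and $\eta = \min\{1, 2M\varepsilon_+/C\}$; one checks that the three resulting sub-cases ($2M\varepsilon_+\le C$, $C/2<M\varepsilon_+\le C$, and $M\varepsilon_+>C$) each produce a stronger bound than the one claimed by elementary algebra, with the only mildly tedious check being that $-2t+C/2 \le -t^2/(2C)$ for $t=M\varepsilon_+\in(C/2,C]$, which reduces to a quadratic inequality whose roots $(2\pm\sqrt{3})C$ bracket the range of interest. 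Both branches of the bound are non-positive (trivially for the first, and because $M\varepsilon_+>C$ forces $C/2-M\varepsilon_+<-C/2<0$ in the second), giving $J(u_+)\le J(u)$.

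For the residual transfer, monotonicity of $J$ immediately yields $r_J(u_+)\le r_J(u)$. In a lazy call, $\varepsilon_+=\varepsilon$, so the remaining inequalities $r_J(u) \le 2M\varepsilon_+ \le 2M\varepsilon$ reduce to the hypothesis. In an exact call, Lemma~\ref{lem:resandgap} gives $r_J(u) \le \Phi(u) = 2M\varepsilon_+$, which is the middle inequality. The final inequality $\varepsilon_+\le\varepsilon$ rests on the observation that the exact branch is entered precisely when no element of $\{v_u(x)\st x\in\Omega\}\cup\{0\}$ satisfies the lazy test; since Lemma~\ref{lem:explicitDiracsolution} shows that $\Phi(u)$ is attained on exactly this set, failure of the lazy search forces $\Phi(u)<M\varepsilon$, hence $\varepsilon_+ = \Phi(u)/(2M)<\varepsilon/2\le\varepsilon$.

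The genuinely content-bearing step, and the main obstacle I anticipate, is the last inequality of the residual transfer: translating the purely algorithmic event ``no lazy solution found'' into the analytic statement $\Phi(u)<M\varepsilon$. This translation is possible only because the lazy oracle searches over the very set on which the dual gap is attained, a structural feature provided by the explicit form of \texttt{GCG} directions in Lemma~\ref{lem:explicitDiracsolution}. Everything else is book-keeping: the descent lemma and the branch-by-branch algebra of Step~2.
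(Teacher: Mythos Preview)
Your proof is correct and follows essentially the same route as the paper: establish the descent inequality $J(u_+)-J(u)\le -\eta\,\varphi(u,v)+\tfrac{\eta^2 C}{2}$ via the Lipschitz bound on $\nabla F$, then split into lazy and exact branches. The one place where you work harder than necessary is the exact call. You propose a three-subcase analysis culminating in the quadratic check $-2t+C/2 \le -t^2/(2C)$; the paper instead observes that since $\Phi(u)=2M\e_+\ge M\e_+$, one has
\[
\min_{\eta\in[0,1]}\Bigl[-2\eta M\e_+ + \tfrac{C}{2}\eta^2\Bigr] \le \min_{\eta\in[0,1]}\Bigl[-\eta M\e_+ + \tfrac{C}{2}\eta^2\Bigr],
\]
which immediately reduces the exact case to the lazy computation already done. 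This monotonicity shortcut replaces your three subcases with one line. Everything else---the residual transfer via Lemma~\ref{lem:resandgap} and the observation that failure of the lazy search forces $\Phi(u)<M\e$ because the gap is attained on the lazy search set---matches the paper exactly.
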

\begin{proof}
    Using Taylor expansion we obtain
    \begin{equation}
        J(u_+)-J(u)\leq \eta \left( \dual{p_u}{v-u}+ \alpha \mnorm{u}- \alpha \mnorm{v} \right)+ C \frac{\eta^2}{2} \notag= -\eta \varphi(u,v)+ C \frac{\eta^2}{2} .
    \end{equation}
    First, consider the case of a lazy call. In this case, $v=v_\e$ is a lazy solution, which yields
    \begin{equation}
        J(u_+)-J(u)\leq -\eta M\varepsilon+ C \frac{\eta^2}{2} .
    \end{equation}
    Notice that the step size, given by $\eta=\min\left\{1,\frac{M\varepsilon}{C}\right\}$, is in fact a minimizer over $[0,1]$ of the quadratic equation on the right-hand side of the above inequality. With direct computation, we obtain
    \begin{equation}\label{eq: quadratic cases}
        J(u_+)-J(u)\leq\min_{\eta\in[0,1]}\left[-\eta M\varepsilon+ C \frac{\eta^2}{2}\right]=
        \begin{cases}
            -\frac{M^2\e^2}{2C} &,\quad M\e\leq C\\
            \frac{C}{2}-M\e &,\quad\text{ else }
        \end{cases} .
    \end{equation}
    Noticing that $\e=\e_+$ after a lazy call concludes the proof of this case.

    In the case of an exact call, $v$ is such that $\varphi(u,v)=\Phi(u)$. We obtain
    \begin{equation}
        J(u_{+})-J(u) \leq -\eta \Phi(u) + C\frac{\eta^2}{2} .
    \end{equation}
    Once again, the choice of $\eta$ minimizes the quadratic equation on the right-hand side, so we can write, substituting the definition of $\e_+$,
    \begin{equation}
        J(u_+)-J(u)\leq\min_{\eta \in [0,1]} \left \lbrack -\eta\Phi(u)+ C \frac{\eta^2}{2} \right \rbrack=\min_{\eta \in [0,1]} \left \lbrack -2\eta M\e_++ C \frac{\eta^2}{2} \right \rbrack\leq\min_{\eta \in [0,1]} \left \lbrack -\eta M\e_++ C \frac{\eta^2}{2} \right \rbrack .
    \end{equation}
    The same computation as in \eqref{eq: quadratic cases} concludes the proof of the first statement.

    As for the second statement, notice that the above implies $r_J(u_+)\leq r_J(u)$. In the case of a lazy call, the inequality follows from $\e=\e_+$. In the case of an exact call, we can write, using Lemma~\ref{lem:resandgap},
    \begin{equation}
        r_J(u)\leq\Phi(u)=2M\e_+\leq M\e,
    \end{equation}
    where the last inequality follows from the property $\Phi(u)\leq M\e$ implied by the inexistence of a lazy solution.
\end{proof}

\begin{theorem}\label{theorem: sublinear}
    Let~$\epsilon>0$ be arbitrary but fixed. Assume that Algorithm~\ref{alg: sublinear parameter-free} generates an infinite sequence $(u_k)$ of iterates. If the initial tolerance $\varepsilon_1$ satisfies $r_J(u_1)\leq2M\varepsilon_1$, then we have
   \begin{align*}
       J(u_{k+1})\leq J(u_k) \quad \text{as well as} \quad r_J(u_k) \leq 2 M \varepsilon_k  
   \end{align*} 
    for all~$k\in\N$. Moreover, there holds $r_J (u_k)\leq\epsilon$ for all~$k \geq \bar{k}(\epsilon)$, where~$ \bar{k}(\epsilon)$ satisfies
    \begin{equation*}
        \bar{k}(\epsilon)\leq \left\lceil\log_2\frac{M\varepsilon_1}{\epsilon}\right\rceil+1+4\left\lceil\log_2\frac{M\varepsilon_1}{C}\right\rceil+64 \frac{C}{\epsilon}.
    \end{equation*}
    In particular, we have $r_J(u_k)= \mathcal{O}(1/k)$ and, if Assumption~\ref{ass:dualvariable} holds, also $u_k\rightharpoonup^*\bar{u}$.
\end{theorem}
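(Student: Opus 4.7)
The plan is to establish the two invariants $J(u_{k+1}) \leq J(u_k)$ and $r_J(u_k) \leq 2M\varepsilon_k$ by induction on $k$, then to count iterations to drive the residual below a prescribed tolerance $\epsilon$, and finally to conclude weak-$\ast$ convergence by a compactness argument. The base case is the hypothesis on $\varepsilon_1$. For the inductive step I would invoke Lemma~\ref{lemma: LGCG in Newton setting}, which under the inductive hypothesis $r_J(u_k) \leq 2M\varepsilon_k$ delivers both $J(u_{k+}) \leq J(u_k)$ and $r_J(u_{k+}) \leq 2M\varepsilon_{k+1}$; the acceptance rule $J(u_{k+1}) \leq J(u_{k+})$ baked into Line~\ref{line: improve} of Algorithm~\ref{alg: sublinear parameter-free} then transfers both bounds to $u_{k+1}$.

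For the iteration complexity I would partition the indices into \emph{epochs} on which $\varepsilon_k$ is constant. Only exact calls change the threshold, and in that case the absence of a lazy solution forces $\Phi(u_k) < M\varepsilon_k$, hence $\varepsilon_{k+1} = \Phi(u_k)/(2M) < \varepsilon_k/2$. Consequently at most $J^\ast = \lceil \log_2(M\varepsilon_1/\epsilon)\rceil + 1$ exact calls (one per epoch) can occur before the invariant forces $r_J(u_k) \leq 2M\varepsilon_k \leq \epsilon$, producing the first two terms of the complexity estimate. Within an epoch of threshold $\varepsilon_{(j)}$, the cumulative $J$-decrease is bounded by the initial residual in that epoch, which is at most $2M\varepsilon_{(j)}$; combining this with the per-iteration lower bound from Lemma~\ref{lemma: LGCG in Newton setting} caps the number of lazy calls at $4$ when $M\varepsilon_{(j)} > C$ (contributing the $4\lceil \log_2(M\varepsilon_1/C)\rceil$ term) and at $4C/(M\varepsilon_{(j)})$ when $M\varepsilon_{(j)} \leq C$. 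The tail sum $\sum_j 4C/(M\varepsilon_{(j)})$ over epochs in the second regime is controlled via the recursive lower bound $\varepsilon_{(j)} > 2^{J^\ast - 1 - j}\varepsilon_{(J^\ast - 1)}$ inherited from strict halving, together with $\varepsilon_{(J^\ast - 1)} > \epsilon/(2M)$ (which holds because epoch $J^\ast - 1$ does not yet satisfy the stopping criterion); this produces a geometric series dominated by its last term and, after chasing constants, the $64C/\epsilon$ contribution. Inverting the resulting bound for large $k$ then gives the asymptotic rate $r_J(u_k) = \mathcal{O}(1/k)$.

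Finally, weak-$\ast$ convergence $u_k \wksto \bar{u}$ under Assumption~\ref{ass:dualvariable} is a soft corollary: the iterates lie in the weak-$\ast$ compact sublevel set $E_J(u_1)$ by Proposition~\ref{prop:existence}, every weak-$\ast$ cluster point minimizes $J$ by weak-$\ast$ lower semicontinuity of $J$ and the fact that $r_J(u_k) \to 0$, and Assumption~\ref{ass:dualvariable} (via \ref{ass:isolated} and \ref{ass:complementary}) forces the minimizer $\bar{u}$ to be unique, so the whole sequence converges. The main technical obstacle I anticipate is the bookkeeping in the second regime of the epoch count: since exact calls may drop $\varepsilon_k$ arbitrarily far below the halving worst case, one has to argue that the harmonic-like sum $\sum_j 1/\varepsilon_{(j)}$ stays dominated by its final term, which is precisely what the chained halving $\varepsilon_{(j)} > 2\varepsilon_{(j+1)}$ together with the non-termination bound at epoch $J^\ast - 1$ delivers.
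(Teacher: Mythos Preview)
Your proposal is correct and follows essentially the same route as the paper: the invariants come from Lemma~\ref{lemma: LGCG in Newton setting} by induction, the exact-call count from the strict halving $\varepsilon_{k+1}<\varepsilon_k/2$, and the lazy-call count per epoch from the telescoping descent bound $2M\varepsilon_{(j)}\geq k'\cdot M^2\varepsilon_{(j)}^2/(2C)$ split into the two regimes $M\varepsilon_{(j)}\gtrless C$. The only cosmetic difference is in the second regime: the paper assigns each epoch a dyadic level $\ell$ with $2^{-\ell-1}C\leq M\varepsilon_{(j)}\leq 2^{-\ell}C$, observes that consecutive epochs occupy distinct levels, and sums $\sum_\ell 2^{\ell+3}$ directly; you instead bound $\sum_j 4C/(M\varepsilon_{(j)})$ as a geometric series via the chained halving and the lower bound $M\varepsilon_{(J^\ast-1)}>\epsilon/2$ at the last pre-terminal epoch. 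Both arguments are equivalent (yours would in fact yield a slightly sharper constant), and your sketch of the $\mathcal{O}(1/k)$ inversion and the weak-$\ast$ compactness argument matches the paper as well.
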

\begin{proof}
The first statement follows inductively from Lemma~\ref{lemma: LGCG in Newton setting} and $J(u_{k+1})\leq J(u_{k+})$.

Let us now prove the complexity estimate. The threshold $\e_k$ only changes during an exact call. In such a case, it holds that $\Phi(u_k) < M \varepsilon_k$  and thus, by definition,~$\varepsilon_{k+1} < \varepsilon_k /2$. In particular, $\seq{\varepsilon_k}$ is a decreasing sequence. Furthermore, $\varepsilon_k >0$ for all~$k \geq 1$, since Algorithm~\ref{alg: sublinear parameter-free} does not converge in finitely many steps by assumption. Using $r_J(u_{k})\leq 2M \varepsilon_{k}$, we conclude that at most $\lceil\log_2\frac{M\varepsilon_1}{\epsilon}\rceil+1$ exact calls are encountered until we have~$r_J(u_k)\leq \epsilon$.
 
It remains to count the number~$k'$ of lazy calls following an exact one. The initial number of lazy calls at the start of the iteration can be bounded analogously. For this purpose, let~$k,k' \in \N$ be such that iteration~$k$ corresponds to an exact call and the following~$k'$ iterations are lazy calls. Then we have~$\varepsilon_{k+1}=\varepsilon_{k+1}=\cdots=\varepsilon_{k+k'}$. With Lemma~\ref{lemma: LGCG in Newton setting}, we can write
\begin{align} \label{eq:lowerestcasedist}
     2M \varepsilon_{k+1} \geq  r_J(u_{k+1}) \geq J(u_{k+1})-J(u_{k+k'+1}) &= \sum_{i=k+1}^{k+k'}(J(u_i)-J(u_{i+1})) \notag\\ &\geq 
            \begin{cases}
                k'\frac{M^2\varepsilon_{k+1}^2}{2C}&,\quad M\varepsilon_{k+1}\leq C\\
                k'(M\varepsilon_{k+1}-\frac{C}{2})&,\quad\text{else}
            \end{cases} ,
\end{align}
We make a case distinction:
\begin{itemize}
    \item[Case 1:] If~$M\varepsilon_{k+1} > C$, we use~\eqref{eq:lowerestcasedist} to conclude
       \begin{align*}
              k' \leq \frac{2M\varepsilon_{k+1}}{M\varepsilon_{k+1}-\frac{C}{2}}=\frac{4M\varepsilon_{k+1}}{2M\varepsilon_{k+1}-C}\leq\frac{4M\varepsilon_{k+1}}{2M\varepsilon_{k+1}-M\varepsilon_{k+1}}=4.
       \end{align*}
       Moreover, since the update rule for exact calls at least halves the tolerance, this case can only happen at most~$\lceil\log_2 ((M\varepsilon_1)/C)\rceil$ times, yielding in the worst-case $4\lceil\log_2 ((M\varepsilon_1)/C)\rceil$ iterations.
    \item[Case 2:] If~$M\varepsilon_{k+1} \leq C$, we recall that
       \begin{align*}
           r_J(u_{k})\leq2M \varepsilon_{k+1} \leq 2C 
       \end{align*}
       Since we are interested in the worst-case behavior, we can further assume that~$2M \varepsilon_{k+1} > \epsilon$. The latter  implies that there is an $\ell_k \in \N$ with
       \begin{align*}
       2^{-\ell_k-1} C \leq M\varepsilon_{k+1} \leq 2^{-\ell_k} C  \quad \text{as well as} \quad \ell_k \leq \lceil \log_2 (C/\epsilon) \rceil +1.
       \end{align*}
       Thus,~\eqref{eq:lowerestcasedist} implies that $k' \leq 2^{\ell_k +3}$.
       Moreover, if~$k_1, k_2 \in \N$ are two indices corresponding to exact calls with~$M\varepsilon_{k_1+1} \leq C$ and $M\varepsilon_{k_2+1} \leq C$, respectively, as well as~$k_1 < k_2$, we conclude~$\ell_{k_1} +1 \leq \ell_{k_2}$ since consecutive exact calls at least halve the tolerance. As a consequence, the combined number of iterations in this case is bounded by
       \begin{align*}
           \sum^{\lceil \log_2 (C/\epsilon) \rceil +1}_{j=0} 2^{j +3} \leq 2^{\lceil \log_2 (C/\epsilon) \rceil +5} \leq 2^{ \log_2 (C/\epsilon)   +6}= 64 \frac{C}{\epsilon}. 
       \end{align*} 
\end{itemize}
Combining both cases with the number of potential exact calls yields the desired statement.

In order to see the sublinear rate of convergence, we set
\begin{align*}
    c_1=M \varepsilon_1+ 64C , \quad c_2 =2+4\left\lceil\log_2\frac{M\varepsilon_1}{C}\right\rceil ,
\end{align*}
and let~$k \geq c_2 +1$ be arbitrary but fixed. Setting~$\epsilon(k)= c_1/(k-c_2)$, we note that
\begin{equation}
    \bar{k}(\epsilon(k))\leq \frac{M\e_1+64C}{\epsilon(k)}+2+4\left\lceil\log_2\frac{M\varepsilon_1}{C}\right\rceil=\frac{c_1}{\epsilon(k)}+c_2=k .
\end{equation}
Thus, by the definition of $\bar{k}(\epsilon)$,
\begin{align*}
    r_J(u_k)\leq r_J(u_{\bar{k}(\epsilon(k))})\leq \epsilon(k)= \frac{c_1}{k-c_2}.  
\end{align*}

Finally, the weak* convergence follows from $r_J(u_k)\rightarrow0$ like in the proof of Proposition~\ref{prop: sublevel general}.
\end{proof}

\section{Lazifying Primal-Dual Active Point methods}\label{section:lpdap}

Following the program established in the previous section, our interest now lies in relaxing the Primal-Dual-Active Point method (\texttt{PDAP}), proposed in  \cite{PieperKonstantin2021Lcoa}, which can be stated as
\begin{equation} \label{def:pdapactualsection}
    u_{k+1} \in \argmin_{u \in \mathcal{M}(\A_k \cup\{\widehat{x}_k\}) } J(u) \quad \text{with} \quad \A_k= \A_{u_k} \quad \text{and} \quad \widehat{x}_k \in \argmax_{x \in \Omega} |p_k(x)| ,
\end{equation}
where we replace the spatial domain $\Omega$ by a finite set of distinct points $\A_k \cup\{\widehat{x}_k\}$ in the update of the iterate. While \texttt{PDAP} retains the worst-case convergence guarantees of \texttt{GCG}, it also ensures that both the support of the iterate $u_k$ as well as the new candidate point $\widehat{x}_k$ cluster around the support of $\bar{u}$ provided that Assumption \ref{ass:dualvariable} holds. In the following, we show that these favorable properties are retained, and can be exploited, for a lazified version of \eqref{def:pdapactualsection}, eventually leading to an asymptotic linear rate of convergence.

For this purpose, and for a finite set of distinct points $\mathcal{A}=\{x^j\}^N_{j=1}$, consider the coefficient update problem
\begin{equation}\tag{$\mathcal{P}_\A$}\label{eq: low-dimensional problem}
    \min_{u\in\mathcal{M}(\A)}J(u),
\end{equation}
noting that
\begin{equation*}
\M{\A}= \left\{ u_\lambda\;|\;u_\lambda=\sum^N_{j=1}\lambda^j \delta_{x^j},~\lambda \in \R^N \right\}, \quad J(u_\lambda)= F \left(\sum^N_{j=1} \lambda^j \kappa(x^j) \right)+ \alpha \Vert\lambda\Vert_{\ell_1}.
\end{equation*}
As a consequence, \eqref{eq: low-dimensional problem} corresponds to to a finite-dimensional, convex but nonsmooth minimization problem for which we have
\begin{align*}
    J(u)- \min_{v \in \mathcal{M}(\A)} J(v) \leq \Phi_{\A}(u), \quad \Phi_{\A}(u)   \coloneqq \max_{v\in\mathcal{M}(\A), \mnorm{v} \leq M}\varphi(u,v) ,
\end{align*}
as well as
\begin{align} \label{eq:combinedgap}
\Phi_{\A}(u)&=M\left(\max_{x\in\A}\vert p_u(x)\vert-\alpha\right)_++\alpha\mnorm{u}-\langle p_u,u\rangle, \\  \Phi(u)&= M\left(\cnorm{p_u}-\max\left\{\max_{x\in\A}\vert p_u(x)\vert , \alpha \right\}\right)_++\Phi_\A(u) 
\end{align}
in view of Lemmas \ref{lem:resandgap} and \ref{lem:explicitDiracsolution}, respectively. Note that, in contrast to $\Phi(u)$, $\Phi_{\A}(u)$ is exactly computable in $\# \A$ operations. 

In order to increase readability, we focus on the main results in the following exposition and move the proofs of necessary auxiliary results to Appendix \ref{app:lpdap}.

Loosely speaking, we lazify \eqref{def:pdapactualsection} by replacing the exact computation of $\widehat{x}_k$ by a lazy update step in the spirit of Section \ref{section:lazification} and allowing for an inexact resolution of the coefficient update problem \eqref{eq: low-dimensional problem}, which is controlled by the gap functional $\Phi_{\mathcal{A}_k}(u_k)$, where $\A_k=\A_{u_k}$. The former is further augmented by the \textit{Local Support Improver} (\texttt{LSI}), which exploits the local strong concavity of $p_k$, while the latter is facilitated by \textit{Drop Steps}, removing Dirac-Delta functionals far away from $\bar{\A}$. Furthermore, the coefficient update problem \eqref{eq: low-dimensional problem} is modified to asymptotically optimize only over measures with the desired sign on $\Omega_R$.

For the remainder of this work, let Assumptions~\ref{ass:functions} and \ref{ass:dualvariable} hold and use the parameters defined in \eqref{eq: parameters}. Let $M$ be as in Section~\ref{section:lazification}.


We start by describing the drop step. For this, consider the set
\begin{align}\label{eq: drop set}
    \mathcal{D}_u \coloneqq \left\{ x \in \A_u \;|\; \sign(p_u(x)) \neq \sign(u(\{x\})) \lor |p_u(x)| \leq \alpha- \sigma/2   \right\}
\end{align}
and define the drop measure associated to $u$ as $ u^{\text{drop}} \coloneqq u \mres (\Omega\setminus \mathcal{D}_u)$.

\begin{lemma}\label{lemma: drop valid}
Let $\Delta(R)$ be as in Proposition~\ref{prop: sublevel p}. Then there exists a $0<\Delta\leq\Delta(R)$ such that for all sparse $u\in\Mm$ with $r_J(u)\leq\Delta$ it holds $J(u^{\textup{drop}}) \leq J(u)$, as well as 
\begin{align}\label{eq: drop properties}
    \A_{u^\textup{drop}}\cap B_R(\bar{x}^j)\neq\emptyset\quad\text{for all}\quad j\leq\bar{N},\quad \A_{u^\textup{drop}}\subset\Omega_R,\quad\text{and}\\\sign(p_{u^\textup{drop}}(x))=\sign(u^\textup{drop}(\{x\}))\quad\text{for all}\quad x\in\A_{u^{\textup{drop}}}.
\end{align}
\end{lemma}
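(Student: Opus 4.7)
The plan is first to establish the descent inequality $J(u^{\textup{drop}}) \leq J(u)$, and then to derive the structural properties of $\A_{u^{\textup{drop}}}$ by invoking Propositions~\ref{prop: sublevel general} and~\ref{prop: sublevel p} applied to $u^{\textup{drop}}$, which will then lie in the same sublevel set of $J$ as $u$. Writing $u = u^{\textup{drop}} + u^{\textup{rest}}$ with $u^{\textup{rest}} \coloneqq u \mres \mathcal{D}_u$, the disjoint supports give $\mnorm{u} = \mnorm{u^{\textup{drop}}} + \mnorm{u^{\textup{rest}}}$. The descent lemma for $F$ (available thanks to the $L$-Lipschitz gradient in Assumption~\ref{ass:functions}) applied at $Ku$ and evaluated at $Ku^{\textup{drop}} = Ku - Ku^{\textup{rest}}$ yields
\begin{equation*}
    J(u^{\textup{drop}}) - J(u) \leq \langle p_u, u^{\textup{rest}} \rangle - \alpha \mnorm{u^{\textup{rest}}} + \frac{L C_K^2}{2} \mnorm{u^{\textup{rest}}}^2.
\end{equation*}
Splitting each $x \in \mathcal{D}_u$ according to which of its two defining conditions holds, one obtains in both cases $u(\{x\}) p_u(x) - \alpha|u(\{x\})| \leq -(\sigma/2)|u(\{x\})|$ (using $\sigma < \alpha$, and the contrapositive of~\ref{prop: p sigma} to place points with $|p_u(x)| > \alpha-\sigma/2$ inside $\Omega_R$, where the wrong-sign alternative must then be the active one). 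Consequently the first two terms reduce to at most $-(\sigma/2)\mnorm{u^{\textup{rest}}}$, and descent follows as soon as $\mnorm{u^{\textup{rest}}} \leq \sigma/(L C_K^2)$.

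The main obstacle is therefore to bound $\mnorm{u^{\textup{rest}}}$ linearly in $\Delta$. To this end, I combine strong convexity of $F$ at $\bar y$ (\ref{ass:strongconv}) with the first-order optimality $\langle \bar p, \bar u\rangle = \alpha \mnorm{\bar u}$ (Proposition~\ref{prop:firstorder}) to derive the clean inequality
\begin{equation*}
    \alpha \mnorm{u} - \langle \bar p, u \rangle \leq r_J(u) \leq \Delta .
\end{equation*}
For each $x \in \mathcal{D}_u$, I then show $\alpha |u(\{x\})| - u(\{x\}) \bar p(x) \geq (\sigma/4) |u(\{x\})|$ by another case split: when $|p_u(x)| > \alpha-\sigma/2$, two applications of~\ref{prop: p local sign} (to both $u$ and $\bar u$) match the sign of $\bar p(x)$ to that of $p_u(x)$, so the wrong-sign condition forces $u(\{x\})\bar p(x) \leq 0$ and the summand is at least $\alpha|u(\{x\})|$; when $|p_u(x)| \leq \alpha-\sigma/2$, the sublevel bound $\|p_u - \bar p\|_{\C} \leq C_K L \sqrt{\Delta/\gamma}$ from~\ref{prop: sublevel p convergence}, combined with $\Delta$ chosen so that this is at most $\sigma/4$, yields $|\bar p(x)| \leq \alpha - \sigma/4$ and hence the summand is at least $(\sigma/4)|u(\{x\})|$. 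Summing over $\mathcal{D}_u$ gives $\mnorm{u^{\textup{rest}}} \leq 4\Delta/\sigma$, so fixing $\Delta$ as the minimum of $\Delta(R)$, $\sigma^2/(4 L C_K^2)$ and $\gamma\sigma^2/(16 C_K^2 L^2)$ closes the descent argument.

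Once $J(u^{\textup{drop}}) \leq J(u)$ is established, $r_J(u^{\textup{drop}}) \leq r_J(u) \leq \Delta \leq \Delta(R)$, so Propositions~\ref{prop: sublevel general} and~\ref{prop: sublevel p} also apply to $u^{\textup{drop}}$. The inclusion $\A_{u^{\textup{drop}}} \subset \Omega_R$ is immediate: any $x \in \A_u \setminus \Omega_R$ satisfies $|p_u(x)| \leq \alpha-\sigma/2$ by~\ref{prop: p sigma} and is therefore dropped. Then~\ref{prop: sublevel coef convergence} applied to $u^{\textup{drop}}$ yields $|u^{\textup{drop}}(B_R(\bar x^j))| \geq \bar\mu > 0$, forcing $\A_{u^{\textup{drop}}} \cap B_R(\bar x^j) \neq \emptyset$ for every $j$. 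Finally, for $x \in \A_{u^{\textup{drop}}} \subset \Omega_R$, $x$ lies in some $B_R(\bar x^j)$, and since $x \notin \mathcal{D}_u$ we have $\sign(u^{\textup{drop}}(\{x\})) = \sign(u(\{x\})) = \sign(p_u(x)) = \sign(\bar\lambda^j)$ by~\ref{prop: p local sign} applied to $u$; one last invocation of~\ref{prop: p local sign} to $u^{\textup{drop}}$ gives $\sign(p_{u^{\textup{drop}}}(x)) = \sign(\bar\lambda^j)$, completing the sign-matching property.
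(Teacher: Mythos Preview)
Your proof is correct, and the overall architecture—Taylor expansion, bound on the dropped mass, then sublevel-set properties for the structural conclusions—matches the paper's. However, your route to the mass bound $\mnorm{u^{\textup{rest}}}$ is genuinely different and in fact sharper. The paper works entirely with the current dual variable $p_u$: it sandwiches $\alpha\mnorm{u}-\langle p_u,u\rangle$ and $\alpha\mnorm{u^{\textup{drop}}}-\langle p_u,u^{\textup{drop}}\rangle$ between quantities of order $\sqrt{r_J(u)}$ (via \ref{prop: sublevel p convergence}, \ref{prop: sublevel norm convergence}, and $\|\bar p\|_\C\leq\alpha$), and then reads off $\mnorm{u^{\textup{rest}}}\leq c_4\sqrt{r_J(u)}$ from the inequality $\alpha\mnorm{u}-\langle p_u,u\rangle \geq \alpha\mnorm{u^{\textup{drop}}}-\langle p_u,u^{\textup{drop}}\rangle + \tfrac{\sigma}{2}\mnorm{\tilde u^1}+\alpha\mnorm{\tilde u^2}$. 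You instead pass directly to the \emph{optimal} dual variable $\bar p$, use the convexity inequality $\alpha\mnorm{u}-\langle\bar p,u\rangle\leq r_J(u)$ together with the nonnegativity of each summand $\alpha|u(\{x\})|-u(\{x\})\bar p(x)$, and a pointwise lower bound on the $\mathcal{D}_u$-summands to obtain the \emph{linear} estimate $\mnorm{u^{\textup{rest}}}\leq 4r_J(u)/\sigma$. Your argument is shorter, yields an explicit $\Delta$, and avoids the detour through \ref{prop: sublevel norm convergence}; the paper's version has the minor advantage of keeping all estimates in terms of $p_u$ alone. Two small remarks: the inequality $\alpha\mnorm{u}-\langle\bar p,u\rangle\leq r_J(u)$ needs only convexity of $F$ and Proposition~\ref{prop:firstorder}, not strong convexity; and your first invocation of \ref{prop: p sigma} (in the bound $\langle p_u,u^{\textup{rest}}\rangle-\alpha\mnorm{u^{\textup{rest}}}\leq -(\sigma/2)\mnorm{u^{\textup{rest}}}$) is unnecessary, since $|p_u(x)|>\alpha-\sigma/2$ already forces the wrong-sign alternative by the definition of $\mathcal{D}_u$ without any localization.
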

\begin{proof}
    See Appendix~\ref{app:lpdap}.
\end{proof}

This motivates Algorithm~\ref{alg: drop step}.

\begin{algorithm} 
    \caption{\textup{\texttt{DropStep}}}
    \label{alg: drop step}
    \KwIn{Measure $u$}
    \KwOut{Improved measure $u_+$}
    $\D_u\leftarrow\left\{ x \in \A_u \;|\; \sign(p_u(x)) \neq \sign(u(\{x\})) \lor |p_u(x)| \leq \alpha- \sigma/2   \right\}$\\
    $u^\text{drop}\leftarrow u\mres (\Omega\setminus \mathcal{D}_u)$\\
    \eIf{$J(u^\textup{drop})\leq J(u)$}{
        $u_+\leftarrow u^\text{drop}$
    }
    {$u_+\leftarrow u$}
    \Return{$u_+$}
\end{algorithm}

Thus, for sparse measures $u$ with small enough objective functional value, Lemma~\ref{lemma: drop valid} ensures that the output $u_+$ of \texttt{DropStep}$(u)$ satisfies \eqref{eq: drop properties}.

Next, we carefully relax the exact resolution of the coefficient update problem \eqref{eq: low-dimensional problem} with a particular focus on guaranteeing the compatibility condition on the sign from \eqref{eq: drop properties}. 

For some nonzero sparse measure $u\in\Mm$, let its support be given by $\A_u=\{x^j\}_{j=1}^N$, $N\in\N$. Consider the modified problem
\begin{equation}\tag{$\mathcal{P}^u$}\label{eq: positive coefficient update}
    \min_{w\in\Mm_+(\A_u)}J^u(w) ,\quad\text{where}\quad J^u(w)=F\left(K^uw\right)+\alpha\mnorm{w}
\end{equation}
and the linear operator $K^u:\Mm(\A_u)\rightarrow Y$ is given by
\begin{equation}
    K^uw=\sum_{j=1}^N\kappa(x^j)\sign(u(\{x^j\}))w(\{x^j\})\,.
\end{equation}
Notice that we minimize over the space of positive measures $\Mm_+(\A_u)$ with support contained in $\A_u$ while the effective sign of each Dirac Delta is fixed by definition of $K^u$.

More in detail, for $w\in\Mm_+(\A_u)$, it holds
\begin{equation}\label{eq: definition of v^u_i}
    J^u(w)=J(v^u_w) ,\quad\text{where}\quad v^u_w=\sum_{j=1}^N\sign(u(\{x^j\}))w(\{x^j\})\delta_{x^j}\in\Mm(\A_u).
\end{equation}
The associated dual variable $p_w^u\in\cC(\A_u)$ is given by
\begin{equation}
    p_{w}^u(x^j)=-K^u_*\nabla F\left(K^uw\right)(x^j)
\end{equation}
for $j\leq N$. Similarly, we obtain the primal-dual gap
\begin{equation}
    \Phi^u(w)=\max_{v\in\Mm_+(\A_u),\mnorm{v}\leq M}\varphi^u(w,v) ,\quad\text{where}\quad\varphi^u(w,v)=\langle p^u_w , v-w\rangle+\alpha\mnorm{w}-\alpha\mnorm{v}.
\end{equation}
which we can rewrite as
\begin{equation}\label{eq: explicit Phi u}
    \Phi^u(w)=M\left(\max_{j\leq N} p_{w}^u(x^j)-\alpha\right)_++\alpha\mnorm{w}-\langle p_w^u ,w\rangle
\end{equation}
for all $w\in\Mm_+(\A_u)$.

\begin{lemma}\label{lemma: coefficient same Phi}
    For all sparse $u\in\Mm$ with $\A_u\subset\Omega_R$, $\sign(u(\{x\}))=\sign(p_u(x))$ for all $x\in\A_u$, and $r_J(u)$ small enough it holds that $\Phi^u(w)=\Phi_{\A_u}(v^u_w)$ for all $w\in\Mm_+(\A_u)$ with $J(v^u_w)\leq J(u)$.
\end{lemma}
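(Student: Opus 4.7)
The plan is to expand both sides of the identity using the explicit formulas \eqref{eq: explicit Phi u} and \eqref{eq:combinedgap} and match their three constituents (the $(\cdot)_+$-term, the $\alpha$-term, and the linear term) one by one. The first two are essentially bookkeeping; the only real work lies in handling the $(\cdot)_+$-term, for which we need to quantify what ``$r_J(u)$ small enough'' means.

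First I would observe that by the definition in \eqref{eq: definition of v^u_i}, we have $\mnorm{v^u_w} = \mnorm{w}$ (since $w \geq 0$) and $K v^u_w = K^u w$. Using Lemma~\ref{lem:propofK}, this yields for each $x^j \in \A_u$ the pointwise identity
\begin{equation*}
    p^u_w(x^j) = \bigl(\sign(u(\{x^j\}))\kappa(x^j),\, -\nabla F(K^u w)\bigr)_Y = \sign(u(\{x^j\}))\, p_{v^u_w}(x^j).
\end{equation*}
Consequently, summing against $w(\{x^j\})\geq 0$ gives $\langle p^u_w, w\rangle = \langle p_{v^u_w}, v^u_w\rangle$, and the $\alpha\mnorm{\cdot}$-terms agree automatically.

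The remaining task is to show that
\begin{equation*}
    \max_{j\leq N} p^u_w(x^j) = \max_{x \in \A_u}|p_{v^u_w}(x)|,
\end{equation*}
which, via the identity above, is equivalent to showing that $\sign(u(\{x^j\})) = \sign(p_{v^u_w}(x^j))$ at every $x^j \in \A_u$. This is where the smallness hypothesis enters. Choose $\Delta \leq \Delta(R)$ from Proposition~\ref{prop: sublevel p} so that every measure with residual at most $\Delta$ satisfies \eqref{prop: p local sign}. Assuming $r_J(u) \leq \Delta$, we also have $r_J(v^u_w) \leq J(v^u_w) - J(\bar u) \leq J(u)-J(\bar u) = r_J(u) \leq \Delta$ by hypothesis. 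Fix $x^j \in \A_u$; since $\A_u \subset \Omega_R$, there exists $i \leq \bar N$ with $x^j \in B_R(\bar x^i)$. Applying \eqref{prop: p local sign} to both $u$ and $v^u_w$, we get $\sign(p_u(x^j)) = \sign(\bar\lambda^i) = \sign(p_{v^u_w}(x^j))$. Combined with the standing assumption $\sign(u(\{x^j\})) = \sign(p_u(x^j))$, this gives the desired equality of signs, so $p^u_w(x^j) = |p_{v^u_w}(x^j)|$ for each $j$ and the $(\cdot)_+$-terms coincide.

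The main obstacle, as alluded to above, is locating the sign-matching step and recognizing that the residual bound must be transferred from $u$ to $v^u_w$ via the hypothesis $J(v^u_w) \leq J(u)$; once the right $\Delta$ is fixed via Proposition~\ref{prop: sublevel p}, the rest is straightforward algebraic manipulation. I would therefore phrase the lemma's ``$r_J(u)$ small enough'' as $r_J(u) \leq \Delta$ with this $\Delta$, and structure the proof as (i) the algebraic reduction of $\Phi^u(w)$ to the expression with $p_{v^u_w}$ and $v^u_w$, followed by (ii) the sign argument that turns $\max_j p^u_w(x^j)$ into $\max_j |p_{v^u_w}(x^j)|$.
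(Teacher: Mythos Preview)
Your proposal is correct and follows essentially the same approach as the paper: both proofs establish the pointwise identity $p^u_w(x^j)=\sign(u(\{x^j\}))\,p_{v^u_w}(x^j)$, invoke \eqref{prop: p local sign} (applied to both $u$ and $v^u_w$ via the residual transfer $r_J(v^u_w)\leq r_J(u)$) to turn this into $p^u_w(x^j)=|p_{v^u_w}(x^j)|$, and then match the explicit formulas \eqref{eq: explicit Phi u} and \eqref{eq:combinedgap}. Your version is somewhat more explicit about the residual transfer and the $\mnorm{\cdot}$-bookkeeping, but the argument is the same.
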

\begin{proof}
    See Appendix~\ref{app:lpdap}
\end{proof}

\begin{algorithm} 
    \caption{\textup{\texttt{CoefficientStep}}}
    \label{alg: low-dimensional problem}
    \KwIn{Measure $u$, accuracy $\Psi>0$}
    \KwOut{Improved measure $u_+$, positive measure $w_+$}
    $w_0\leftarrow\sum_{x\in\A_u}\bigl \vert u(\{x\})\vert\delta_x\in\Mm_+(\A_u)$\\
    Find a $w_+\in\Mm_+(\A_u)$ such that $J^u(w_+)\leq J^u(w_0)$ and $\Phi^u(w_+)\leq\Psi$\label{line: cefficient problem in algo}\\
    $u_+\leftarrow v_{w_+}^u$\\
    \Return{$u_+,w_+$}
\end{algorithm}

Consider Algorithm~\ref{alg: low-dimensional problem}. For all sparse $u$ that satisfy the conditions of Lemma~\ref{lemma: coefficient same Phi}, this algorithm returns measures that solve both \eqref{eq: low-dimensional problem} and \eqref{eq: positive coefficient update} up to the given accuracy $\Psi$.

\begin{lemma}\label{lemma: output of coefficient step}
    For all sparse $u\in\Mm$ with $\A_u\subset\Omega_R$, $\sign(u(\{x\}))=\sign(p_u(x))$ for all $x\in\A_u$, and $r_J(u)$ small enough it holds that the output $u_+,w_+$ of \textup{\texttt{CoefficientStep}}$(u,\Psi)$ satisfies $\sign(u_+(\{x\}))=\sign(p_{u_+}(x))$ for all $x\in\A_{u_+}$ as well as $\Phi_{\A_{u_+}}(u_+)\leq\Phi^{u}(w_+)\leq\Psi$.
\end{lemma}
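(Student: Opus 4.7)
The plan is to reduce the statement to the sign propagation from Proposition~\ref{prop: sublevel p}\eqref{prop: p local sign} and the gap identity from Lemma~\ref{lemma: coefficient same Phi}, once the update in Algorithm~\ref{alg: low-dimensional problem} has been unpacked. Throughout, I would fix the smallness threshold on $r_J(u)$ so that Proposition~\ref{prop: sublevel p} and Lemma~\ref{lemma: coefficient same Phi} are applicable both to $u$ and to every measure with residual not exceeding $r_J(u)$; this is harmless, since both results are phrased in terms of sublevel-set membership.

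First, I would unpack the construction. Because $w_+ \in \Mm_+(\A_u)$ and $u_+ = v^u_{w_+}$, one has $\A_{u_+} \subseteq \A_u \subseteq \Omega_R$ and, for every $x \in \A_{u_+}$,
\begin{equation*}
\sign(u_+(\{x\})) = \sign(u(\{x\}))\sign(w_+(\{x\})) = \sign(u(\{x\})).
\end{equation*}
Moreover, the choice $w_0 = \sum_{x\in\A_u} |u(\{x\})|\delta_x$ together with the sign hypothesis on $u$ gives $v^u_{w_0} = u$, hence
\begin{equation*}
J(u_+) = J(v^u_{w_+}) = J^u(w_+) \leq J^u(w_0) = J(u),
\end{equation*}
so that $r_J(u_+) \leq r_J(u)$ and Proposition~\ref{prop: sublevel p} remains applicable to $u_+$ as well.

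Applying Proposition~\ref{prop: sublevel p}\eqref{prop: p local sign} to both $u$ and $u_+$, the signs of $p_u$ and $p_{u_+}$ are constant on each $B_R(\bar{x}^j)$ and coincide with $\sign(\bar{\lambda}^j)$. The assumption $\sign(u(\{x\})) = \sign(p_u(x))$ on $\A_u$ therefore propagates to $\sign(u(\{x\})) = \sign(p_{u_+}(x))$ for every $x \in \A_{u_+} \subseteq \Omega_R$, which, combined with the first display above, yields the required identity $\sign(u_+(\{x\})) = \sign(p_{u_+}(x))$ on $\A_{u_+}$.

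For the primal-dual gap estimate, I would invoke Lemma~\ref{lemma: coefficient same Phi} at $w_+$, which is admissible because $J(v^u_{w_+}) = J(u_+) \leq J(u)$, yielding $\Phi^u(w_+) = \Phi_{\A_u}(u_+)$. The inclusion $\mathcal{M}(\A_{u_+}) \subseteq \mathcal{M}(\A_u)$ then gives $\Phi_{\A_{u_+}}(u_+) \leq \Phi_{\A_u}(u_+)$, and line~\ref{line: cefficient problem in algo} of Algorithm~\ref{alg: low-dimensional problem} enforces $\Phi^u(w_+) \leq \Psi$, closing the chain. The argument is essentially bookkeeping; the only point that requires minor care is to select a single smallness threshold on $r_J(u)$ that simultaneously serves the hypotheses of Proposition~\ref{prop: sublevel p} and Lemma~\ref{lemma: coefficient same Phi} for both $u$ and $u_+$, and I do not anticipate a serious obstacle beyond this.
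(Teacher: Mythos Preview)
Your proposal is correct and follows essentially the same route as the paper's proof: both establish $v^u_{w_0}=u$ to get $J(u_+)\le J(u)$, use the construction of $v^u_{w_+}$ to obtain $\sign(u_+(\{x\}))=\sign(u(\{x\}))$, apply \eqref{prop: p local sign} to both $u$ and $u_+$ for the sign conclusion, and close with Lemma~\ref{lemma: coefficient same Phi} together with $\A_{u_+}\subseteq\A_u$ for the gap inequality. One minor remark: the identity $v^u_{w_0}=u$ holds directly from the definitions and does not actually require the sign hypothesis on $u$.
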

\begin{proof}
    See Appendix~\ref{app:lpdap}.
\end{proof}

In the following, we want to exploit the structure of $p_u$ given by Propositions~\ref{prop: sublevel general} and \ref{prop: sublevel p} to locally improve support points $x\in\A_u$ in a way that allows for the construction of refined descent directions and facilitates the computation of lazy solutions to \eqref{eq:partiallylinearizedinoursetting}. More in detail, given an $x\in\A_u$, we look for a point $x_{\text{LSI}} \in B_{2R}(x)$ with
\begin{equation}
    \label{eq: LSI sigma}
    \vert p_u(x_{LSI})\vert>\alpha-\sigma/2 ,
\end{equation}
\begin{equation}\label{eq: LSI suboptimality ineuqality}    
    \vert p_u(x_{\text{LSI}})\vert-\max_{z\in \A_u\cap B_{2R}(x)}\vert p_u(z)\vert\geq 2R\Vert\nabla p_u(x_{\text{LSI}})\Vert ,
\end{equation}
and
\begin{align}\label{eq: LSI Phi inequality}
    \Vert\nabla &p_u(x_{\text{LSI}})\Vert\leq\Phi_{\A_u}(u) ,
\end{align}
which reflect our desire to compute local maximizers of $|p_u|$ as potential candidates for lazy update directions. In this context, the enlarged balls $B_{2R}(x)$ serve as a proxy for the unknown neighborhoods $B_{R}(\bar{x}^j)$, noting that $\A \subset \Omega_R$ implies
\begin{equation*}
    \A\cap B_{2R}(x)= \A \cap B_{R}(\bar{x}^j) \quad \text{for all} \quad x \in B_{R}(\bar{x}^j)
\end{equation*}
by \eqref{ass:regkernel} if $r_J(u)$ is small enough. The resulting subroutine, called the Local Support Improver (\texttt{LSI}), is summarized in Algorithm \ref{alg: local improver}. Note that the described procedure is not applied to every $x \in \A_u$, but instead we successively construct a covering of $\A_u$ by balls of radius $2R$, owing to the fact that support points of $u$ can cluster.

\begin{algorithm}
\caption{Local Support Improver (\texttt{LSI})}
\label{alg: local improver}
\KwIn{Measure $u$}
\KwOut{Sets of improved points $\mathcal{B}$}
$\A\leftarrow\A_u$\\
$\mathcal{B}\leftarrow\emptyset$\\
\While{$\A\neq\emptyset$}{
  Choose $x \in \argmax_{z\in\A}\vert p_u(z)\vert$\label{line: start point selection}\\
    Find, if one exists, an $x_{\text{LSI}}\in B_{2R}(x)$ satisfying
    \begin{align*}
     \vert p_u(x_{LSI})\vert>\alpha-\sigma/2 ,\quad\Vert\nabla p_u(x_{\text{LSI}})\Vert\leq\Phi_{\A_u}(u) ,
    \end{align*}
    as well as
    \begin{align*}
        |p_u(x_{\text{LSI}})|-\max_{z\in \A_u\cap B_{2R}(x)}\vert p_u(z)\vert\geq 2R\Vert\nabla p_u(x_{\text{LSI}})\Vert .
    \end{align*}\\
    $\mathcal{B}\leftarrow \mathcal{B} \cup \{x_{\text{LSI}}\}$\\
    $\A\leftarrow\A \backslash  B_{2R}(x)$
}
\Return{$\mathcal{B}$}
\end{algorithm}

The following lemma shows that Algorithm~\ref{alg: local improver} is well defined.

\begin{lemma}\label{lemma: LSI valid}
    If the radius $R$ is as in \eqref{eq: parameters}, then for all $u\in\Mm$ with $\A_u\subset\Omega_R$ and $r_J(u)$ small enough Algorithm \ref{alg: local improver} produces a set $\mathcal{B}_u=\{x^{u,j}_{\text{LSI}}\}^{\bar{N}}_{j=1}$ with $x^{u,j}_{\text{LSI}} \in B_R(\bar{x}^j)$ for all $j\leq \bar{N}$.

    In particular, it also holds that
    \begin{equation*}
        \A_u\cap B_{2R}(x^{u,j}_{\text{LSI}})=  \A_u\cap B_R(\bar{x}^j)
    \end{equation*}
    for all $j\leq \bar{N}$.
\end{lemma}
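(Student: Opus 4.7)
The plan is to show that in each iteration of the while loop one can choose $x_{\text{LSI}}$ to be the unique local maximizer $\widehat{x}_u^j$ of $|p_u|$ on the cluster ball $B_R(\bar{x}^j)$ which contains the starting point $x$, and that the update $\A \leftarrow \A \setminus B_{2R}(x)$ then removes exactly the points of $\A_u$ lying in that one cluster.

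First, I would collect two geometric facts. The separation condition in \eqref{ass:regkernel} yields $\|\bar{x}^i - \bar{x}^j\| \geq 4R' \geq 4R$, so that for any $x \in B_R(\bar{x}^j)$ one has $B_R(\bar{x}^j) \subset B_{2R}(x)$ together with $B_{2R}(x) \cap B_R(\bar{x}^i) = \emptyset$ for $i \neq j$; combined with $\A_u \subset \Omega_R$ this gives the identity
\begin{equation*}
    \A_u \cap B_{2R}(x) = \A_u \cap B_R(\bar{x}^j),
\end{equation*}
which simultaneously furnishes the ``in particular'' part of the claim once $x_{\text{LSI}}^{u,j} \in B_R(\bar{x}^j)$ is established. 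Second, for $r_J(u)$ below the threshold $\Delta(R)$ from Proposition~\ref{prop: sublevel p}, item \eqref{prop: sublevel coef convergence} of Proposition~\ref{prop: sublevel general} applied with $\A_u \subset \Omega_R \subset \Omega_{R'}$ ensures that each cluster contains at least one point of $\A_u$, since $u(B_{R'}(\bar{x}^j))= u(B_R(\bar{x}^j))$ under this inclusion.

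Next, I would verify that $\widehat{x}_u^j$ meets the three requirements imposed on $x_{\text{LSI}}$ in Algorithm~\ref{alg: local improver}. Sign constancy of $p_u$ on $B_R(\bar{x}^j)$, cf.~\eqref{prop: p local sign}, turns the interior local maximum of $|p_u|$ at $\widehat{x}_u^j$ into a critical point of $p_u$ itself, yielding $\nabla p_u(\widehat{x}_u^j) = 0$ and the gradient inequality trivially. Shrinking $r_J(u)$ further, \eqref{prop: sublevel p convergence} and $|\bar{p}(\bar{x}^j)| = \alpha$ give $|p_u(\widehat{x}_u^j)| \geq |p_u(\bar{x}^j)| > \alpha - \sigma/2$. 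Finally, the quadratic growth in \eqref{prop: p curvature and growth} identifies $\widehat{x}_u^j$ as the \emph{global} maximizer of $|p_u|$ on the whole ball $B_R(\bar{x}^j)$, so by the displayed identity above and $\nabla p_u(\widehat{x}_u^j)=0$ the suboptimality inequality reduces to $0 \geq 0$.

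A short induction on the iterations of the while loop then finishes the proof: the selected $x \in \argmax_{z\in\A}|p_u(z)| \subset \Omega_R$ always lies in some $B_R(\bar{x}^{j_k})$, the update removes exactly $\A_u \cap B_R(\bar{x}^{j_k})$, and by the first paragraph each of the $\bar{N}$ clusters is visited in exactly one iteration. Relabeling outputs by cluster index produces $\mathcal{B}_u = \{x_{\text{LSI}}^{u,j}\}_{j=1}^{\bar{N}}$ with $x_{\text{LSI}}^{u,j} \in B_R(\bar{x}^j)$. The only delicacy will be the bookkeeping of how small $r_J(u)$ must be taken: we need Propositions~\ref{prop: sublevel general} and \ref{prop: sublevel p} to apply and in addition $\|p_u - \bar{p}\|_{\C} < \sigma/4$, which is a routine intersection of finitely many sublevel thresholds.
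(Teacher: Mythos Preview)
Your argument establishes that $\widehat{x}_u^j$ is a valid choice for $x_{\text{LSI}}$, but the algorithm is nondeterministic: it may return \emph{any} point $x_{\text{LSI}}\in B_{2R}(x)$ satisfying the three displayed conditions. The lemma asserts that the output lands in $B_R(\bar{x}^j)$ regardless of which admissible point is picked, so exhibiting one admissible point that happens to lie there is not sufficient. As stated, your plan proves only that there \emph{exists} an execution of Algorithm~\ref{alg: local improver} producing the claimed output, not that every execution does.

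The missing ingredient is short: the first condition $|p_u(x_{\text{LSI}})|>\alpha-\sigma/2$ together with \eqref{prop: p sigma} forces $x_{\text{LSI}}\in\overline{\Omega_R}$, and your geometric fact (which in fact gives $B_{2R}(x)\cap\Omega_R=B_R(\bar{x}^j)$, not merely the intersection with $\A_u$) then yields $x_{\text{LSI}}\in B_R(\bar{x}^j)$. This is exactly how the paper closes the argument. Once this is added, your induction and the ``in particular'' conclusion go through as written.
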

\begin{proof}
    See Appendix~\ref{app:lpdap}.
\end{proof}
 
Let $\mathcal{B}_u$ be the output of \texttt{LSI}$(u)$. By construction, elements in $\mathcal{B}_u$ allow for a tight estimation of the suboptimality of points in $\A_u$. For this purpose, recall that, for all $u$ such that $r_J(u)$ is small enough, there is an index $\bar{\jmath}_u \in \{1,\dots,\bar{N}\}$ such that $\widehat{x}_u \coloneqq \widehat{x}_u^{\bar{\jmath}_k}$ is a global maximizer of $|p_u|$, see Propositions~\ref{prop: sublevel p} and \ref{prop: sublevel general}. 

\begin{lemma}\label{lemma: LSI suboptimal}
   For all $u\in \Mm$ with $\A_u\subset\Omega_R$ and $r_J(u)$ small enough it holds 
    \begin{equation}
        \label{eq:maximum bound}
        \vert p_u(\widehat{x}^u_{\text{LSI}})\vert-\max_{x\in \A_u\cap B_{2R}(\widehat{x}^u_{\text{LSI}})}\vert p_u(x)\vert\geq\frac{1}{2}\left(\vert p_u(\widehat{x}_u)\vert-\max_{x\in\A_u\cap B_R(\bar{x}^{\bar{\jmath}u})}\vert p_u(x)\vert \right),
    \end{equation}
    where
    \begin{equation*}
        \widehat{x}^u_{\text{LSI}}\in\argmax_{x\in\mathcal{B}_u}\left\lbrack\vert p_u(x)\vert-\max_{z\in \A_u\cap B_{2R}(x)}\vert p_u(z)\vert\right\rbrack.
    \end{equation*}
\end{lemma}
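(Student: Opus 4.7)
The plan is to read off two tight estimates at the LSI point $x^{u,\bar{\jmath}_u}_{\text{LSI}}$ belonging to the cluster $B_R(\bar{x}^{\bar{\jmath}_u})$ that hosts the global maximizer $\widehat{x}_u$: the acceptance condition built into Algorithm~\ref{alg: local improver}, and the local control of the suboptimality of $|p_u|$ around its maximizer provided by Proposition~\ref{prop: sublevel p}~\eqref{prop: p local max}. Adding these will yield the factor-$1/2$ estimate at $x^{u,\bar{\jmath}_u}_{\text{LSI}}$, and the argmax definition of $\widehat{x}^u_{\text{LSI}}$ then transfers it to the claimed point.

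First I would invoke Lemma~\ref{lemma: LSI valid} for $r_J(u)$ sufficiently small: it delivers $\mathcal{B}_u=\{x^{u,j}_{\text{LSI}}\}_{j=1}^{\bar{N}}$ with one representative per cluster, i.e.\ $x^{u,j}_{\text{LSI}}\in B_R(\bar{x}^j)$, together with the crucial cluster-identification identity $\A_u\cap B_{2R}(x^{u,j}_{\text{LSI}})=\A_u\cap B_R(\bar{x}^j)$. By the same token, the starting point $x$ chosen in line~\ref{line: start point selection} at the iteration producing $x^{u,j}_{\text{LSI}}$ also lies in $B_R(\bar{x}^j)$ (it cannot escape via $x_{\text{LSI}}\in B_{2R}(x)$ because the separation in \eqref{ass:regkernel} isolates the clusters once $R$ is small enough), so the acceptance inequality from the algorithm, originally formulated with $B_{2R}(x)$, rewrites as
\begin{equation*}
    |p_u(x^{u,j}_{\text{LSI}})|-\max_{z\in\A_u\cap B_{2R}(x^{u,j}_{\text{LSI}})}|p_u(z)|\geq 2R\|\nabla p_u(x^{u,j}_{\text{LSI}})\|.
\end{equation*}
Specializing to $j=\bar{\jmath}_u$ and noting that $x^{u,\bar{\jmath}_u}_{\text{LSI}}\in B_R(\bar{x}^{\bar{\jmath}_u})$, Proposition~\ref{prop: sublevel p}~\eqref{prop: p local max} provides
\begin{equation*}
    |p_u(\widehat{x}_u)|-|p_u(x^{u,\bar{\jmath}_u}_{\text{LSI}})|\leq 2R\|\nabla p_u(x^{u,\bar{\jmath}_u}_{\text{LSI}})\|.
\end{equation*}

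Summing the two inequalities eliminates the gradient term and, using once more $\A_u\cap B_{2R}(x^{u,\bar{\jmath}_u}_{\text{LSI}})=\A_u\cap B_R(\bar{x}^{\bar{\jmath}_u})$, yields
\begin{equation*}
    |p_u(\widehat{x}_u)|-\max_{z\in\A_u\cap B_R(\bar{x}^{\bar{\jmath}_u})}|p_u(z)|\leq 2\left(|p_u(x^{u,\bar{\jmath}_u}_{\text{LSI}})|-\max_{z\in\A_u\cap B_{2R}(x^{u,\bar{\jmath}_u}_{\text{LSI}})}|p_u(z)|\right).
\end{equation*}
Since $\widehat{x}^u_{\text{LSI}}$ is, by construction, the element of $\mathcal{B}_u$ that maximizes exactly the right-hand-side differential, the right-hand side is bounded above by $2(|p_u(\widehat{x}^u_{\text{LSI}})|-\max_{z\in\A_u\cap B_{2R}(\widehat{x}^u_{\text{LSI}})}|p_u(z)|)$, and dividing by two delivers \eqref{eq:maximum bound}. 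The only delicate point in the argument is ensuring that $B_{2R}(x)$, $B_{2R}(x^{u,j}_{\text{LSI}})$, and $B_R(\bar{x}^j)$ all carve out the same portion of $\A_u$, so that the three different indexing sets can be freely swapped; this is precisely the content of the ``In particular'' clause in Lemma~\ref{lemma: LSI valid} and amounts to taking $R$ small enough compared with $R'$ so that the separation \eqref{ass:regkernel} forbids cross-cluster contamination.
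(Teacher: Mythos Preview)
Your proof is correct and follows essentially the same route as the paper: both arguments combine the acceptance condition \eqref{eq: LSI suboptimality ineuqality} for $x^{u,\bar\jmath_u}_{\text{LSI}}$ with the local suboptimality bound from Proposition~\ref{prop: sublevel p}~\eqref{prop: p local max} to absorb the gradient term and obtain the factor $1/2$, then invoke the argmax definition of $\widehat{x}^u_{\text{LSI}}$. The paper phrases the first part pointwise (fixing an arbitrary $x\in\A_u\cap B_R(\bar{x}^{\bar\jmath_u})$ and then specializing to the maximizer), whereas you work directly with the maxima, but this is purely cosmetic; the cluster-identification step you single out as ``the only delicate point'' is exactly what the paper uses implicitly when applying \eqref{eq: LSI suboptimality ineuqality}.
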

\begin{proof}
    See Appendix~\ref{app:lpdap}.
\end{proof}

Once $\mathcal{B}_u$ is computed, we use the improved support points to construct a new update direction $\tilde{v}_u$ by lumping the mass of $u$ around elements of $\mathcal{B}_u$. In view of Lemma \ref{lemma: LSI valid}, we have
\begin{align*}
    \tilde{v}_u=\sum^{\#\B_u}_{j=1} u(B_{2R}(x^{u,j}_{\text{LSI}})) \delta_{x^{u,j}_{\text{LSI}}}= \sum^{\bar{N}}_{j=1} u(B_R(\bar{x}^j)) \delta_{x^{u,j}_{\text{LSI}}}
\end{align*}
for all $u\in \Mm$ with $\A_u\subset\Omega_R$ and $r_J(u)$ small enough. The following results show that using $\tilde{v}_u$ as an alternative to the lazy update direction leads to a linear decrease of the residual, provided that the local dual gap $\Phi_{\A_u}(u)$ and residual $r_J(u)$ are small enough.

\begin{lemma}\label{lemma:bound on K}
For all $u\in\Mm$ with $\A_u\subset\Omega_R$ and $r_J(u)$ small enough we have  
    \begin{equation}\label{eq: K inequality}
        \Vert K(\tilde{v}_u-u)\Vert_Y\leq \tilde{C}\sqrt{\Phi(u)},
    \end{equation}
    where
    \begin{equation} \label{eq:cconstant}
        \tilde{C}=2C_{K'}\left(2M \sqrt{\frac{R}{\theta}}+\frac{2M C_{K'}L}{\theta\sqrt{\gamma}}+\sqrt{\frac{M}{\theta}}\right).
    \end{equation}
\end{lemma}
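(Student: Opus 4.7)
The plan is to reduce $K(\tilde{v}_u - u)$ to scalar integrals controlled by $\Phi(u)$, combining the local structure of $|p_u|$ around the optimal support with the $\mathcal{C}^{1}$-regularity of $\kappa$. Exploiting $\mathcal{A}_u \subset \Omega_R = \bigcup_{j=1}^{\bar{N}} B_R(\bar{x}^j)$ and the pairwise disjointness of these balls from Assumption~\eqref{ass:regkernel}, the measure splits as $u = \sum_j u \mres B_R(\bar{x}^j)$, giving
\begin{equation*}
\|K(\tilde{v}_u - u)\|_Y \leq \sum_j \left\|\int_{B_R(\bar{x}^j)} (\kappa(x^{u,j}_{\text{LSI}})-\kappa(x))\,du(x)\right\|_Y \leq C_{K'} \sum_j \int_{B_R(\bar{x}^j)} \|x^{u,j}_{\text{LSI}} - x\|\,d|u|(x),
\end{equation*}
using the Lipschitz bound $\|\kappa\|_{\mathcal{C}^{1}(\overline{\Omega_{R'}};Y)} \leq C_{K'}$. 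I then apply the triangle inequality through both $\widehat{x}_u^j$ and $\bar{x}^j$, producing three separately controllable error contributions.

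For the first contribution $\|x^{u,j}_{\text{LSI}} - \widehat{x}_u^j\|$, property~\eqref{prop: p local max} gives $|p_u(\widehat{x}_u^j)| - |p_u(x^{u,j}_{\text{LSI}})| \leq 2R\|\nabla p_u(x^{u,j}_{\text{LSI}})\|$, which combined with the LSI gradient bound $\|\nabla p_u(x^{u,j}_{\text{LSI}})\| \leq \Phi_{\mathcal{A}_u}(u) \leq \Phi(u)$ and the quadratic growth from~\eqref{prop: p curvature and growth} yields $\|x^{u,j}_{\text{LSI}} - \widehat{x}_u^j\| \lesssim \sqrt{R\,\Phi(u)/\theta}$. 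The second contribution $\|\widehat{x}_u^j - \bar{x}^j\|$ uses $\nabla p_u(\widehat{x}_u^j) = 0 = \nabla\bar{p}(\bar{x}^j)$ together with the mean value theorem to write $\nabla p_u(\bar{x}^j) = H_j(\bar{x}^j - \widehat{x}_u^j)$; the curvature bound $-\sign(p_u)\nabla^2 p_u \geq (\theta/4)\Id$ from~\eqref{prop: p curvature and growth} makes $H_j$ invertible with $\|H_j^{-1}\|_{\mathrm{op}} \leq 4/\theta$, while~\eqref{prop: sublevel p convergence} gives $\|\nabla(p_u-\bar{p})(\bar{x}^j)\| \leq C_{K'}L\sqrt{\Phi(u)/\gamma}$, hence $\|\widehat{x}_u^j - \bar{x}^j\| \lesssim C_{K'}L/(\theta\sqrt{\gamma})\,\sqrt{\Phi(u)}$. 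The third contribution $\|\widehat{x}_u^j - x\|$ follows from the same quadratic growth, $\|\widehat{x}_u^j - x\|^2 \leq (8/\theta)(|p_u(\widehat{x}_u^j)| - |p_u(x)|)$.

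The first two contributions are constants in $x$ and are paid by the total mass $\sum_j |u|(B_R(\bar{x}^j)) \leq \mnorm{u} \leq M$, with $\|\widehat{x}_u^j - \bar{x}^j\|$ entering twice through the triangle inequality, which explains the factor $2M$ accompanying $C_{K'}L/(\theta\sqrt{\gamma})$ in $\tilde{C}$. The third contribution is handled by Cauchy--Schwarz over $j$ once the aggregate bound
\begin{equation*}
\sum_j \int_{B_R(\bar{x}^j)} \bigl(|p_u(\widehat{x}_u^j)| - |p_u(x)|\bigr)\,d|u|(x) \leq 2\Phi(u)
\end{equation*}
is established, which follows by combining $\sum_j |p_u(\widehat{x}_u^j)|\,|u|(B_R(\bar{x}^j)) \leq \cnorm{p_u}\,\mnorm{u} \leq \alpha\mnorm{u} + \Phi(u)$ (using $M(\cnorm{p_u}-\alpha)_+ \leq \Phi(u)$ and $\mnorm{u} \leq M$) with $\langle |p_u|,|u|\rangle \geq \langle p_u, u\rangle \geq \alpha\mnorm{u} - \Phi(u)$. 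The main obstacle is the careful bookkeeping required to recover the precise constant $\tilde{C}$; the only nontrivial analytic step is the invertibility argument transferring the curvature of $\bar{p}$ to $p_u$, which relies on~\eqref{prop: p curvature and growth} being valid on $B_R(\bar{x}^j)$ for $r_J(u)$ sufficiently small.
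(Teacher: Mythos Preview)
Your approach is essentially the same as the paper's: bound $\|K(\tilde v_u-u)\|_Y$ via the $\C^1$ Lipschitz constant $C_{K'}$, then split each distance $\|x^{u,j}_{\mathrm{LSI}}-x\|$ by the triangle inequality and control the pieces using \eqref{prop: p local max}--\eqref{prop: p curvature and growth}, a perturbation bound for $\|\widehat x_u^j-\bar x^j\|$, and a mass--curvature estimate for the remaining piece.

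There is, however, an internal inconsistency in your decomposition. The three ``contributions'' you list are $\|x^{u,j}_{\mathrm{LSI}}-\widehat x_u^j\|$, $\|\widehat x_u^j-\bar x^j\|$, and $\|\widehat x_u^j-x\|$, but these do not chain to dominate $\|x^{u,j}_{\mathrm{LSI}}-x\|$. The paper uses the chain through $\widehat x_u^j$ and $\bar x^j$, with third term $\|\bar x^j-x\|$ (bounded along the lines of \cite[Proposition~6.8]{BrediesCarioniFanzonWalter}), not $\|\widehat x_u^j-x\|$. Your explanation that $\|\widehat x_u^j-\bar x^j\|$ ``enters twice'' to account for the factor $2M$ is also off: the prefactor $2$ in $\tilde C$ is an overall factor arising because each of the three paper estimates carries a $4$ (resp.\ $2$), not from double counting one term. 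If you commit to the two-term split $\|x^{u,j}_{\mathrm{LSI}}-\widehat x_u^j\|+\|\widehat x_u^j-x\|$ and use your Cauchy--Schwarz/aggregate bound $\sum_j\int(|p_u(\widehat x_u^j)|-|p_u(x)|)\,d|u|\le 2\Phi(u)$ (which is correct), the second contribution disappears altogether and the third yields $4\sqrt{M\Phi(u)/\theta}$ rather than $2\sqrt{M\Phi(u)/\theta}$, so you do not recover the stated $\tilde C$. Fixing the chain to match the paper's (third term $\|\bar x^j-x\|$) resolves this; the rest of your sketch is fine.
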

\begin{proof}
    See Appendix~\ref{app:lpdap}.
\end{proof}

\begin{theorem}\label{thm:local linear}
There exists a $\zeta \in (0,1)$ such that for all sparse $u\in\Mm$ with $\A_u\subset\Omega_R$, $\sign(u(\{x\}))=\sign(p_u(x))$ for all $x\in\A_u$, and $r_J(u)$ small enough there is a $\tilde{\eta}_u\in [0,1]$ such that $\tilde{u}_{+} \coloneqq u+ \tilde{\eta}_u(\tilde{v}_u-u)$ satisfies
    \begin{equation*}
        r_J(\tilde{u}_{+})\leq\zeta r_J(u)
    \end{equation*}
whenever $\Phi_{\A_u}(u) \leq \Phi(u)/2$.
\end{theorem}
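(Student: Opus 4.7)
The plan is to apply a descent-lemma style argument to $J$ along the direction $\tilde{v}_u - u$, where the hypothesis $\Phi_{\A_u}(u) \leq \Phi(u)/2$ is used to produce a lower bound of order $\Phi(u)$ on $\varphi(u,\tilde{v}_u)$, while Lemma~\ref{lemma:bound on K} controls the quadratic remainder; minimization of the resulting quadratic in $\eta\in[0,1]$ then yields a linear decrease in $\Phi(u) \geq r_J(u)$. To start, combining $\Phi_{\A_u}(u) \leq \Phi(u)/2$ with the decomposition~\eqref{eq:combinedgap} gives $\cnorm{p_u} - \max_{x\in\A_u}\vert p_u(x)\vert \geq \Phi(u)/(2M)$. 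For $r_J(u)$ small enough, Propositions~\ref{prop: sublevel general} and~\ref{prop: sublevel p} imply that $\cnorm{p_u} = \vert p_u(\widehat{x}_u)\vert$ is attained at one of the $\bar{N}$ local maxima of $\vert p_u\vert$ near $\bar{\A}$, so Lemma~\ref{lemma: LSI suboptimal} transfers this gap to the \texttt{LSI} output as
\begin{equation*}
    \vert p_u(\widehat{x}^u_{\text{LSI}})\vert - \max_{x\in\A_u\cap B_{2R}(\widehat{x}^u_{\text{LSI}})}\vert p_u(x)\vert \geq \Phi(u)/(4M) .
\end{equation*}

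Next I would unpack $\varphi(u,\tilde{v}_u)$. The sign hypothesis, together with~\ref{prop: p local sign} and the disjointness of the balls $B_R(\bar{x}^j)$ inherited from~\ref{ass:regkernel}, ensures that all atoms of $u$ in any single $B_R(\bar{x}^j)$ share the sign of $\bar{\lambda}^j$; consequently $\mnorm{\tilde{v}_u} = \sum_{j=1}^{\bar{N}} \vert u(B_R(\bar{x}^j))\vert = \mnorm{u}$, so the Radon-norm terms in $\varphi$ cancel. Since the signs of $p_u$ on $B_R(\bar{x}^j)$ and at $x^{u,j}_{\text{LSI}}$ also agree with this common sign by~\ref{prop: p local sign} and Lemma~\ref{lemma: LSI valid},
\begin{equation*}
    \varphi(u,\tilde{v}_u) = \langle p_u,\tilde{v}_u - u\rangle = \sum_{j=1}^{\bar{N}} \sum_{x\in\A_u\cap B_R(\bar{x}^j)} \bigl(\vert p_u(x^{u,j}_{\text{LSI}})\vert - \vert p_u(x)\vert\bigr)\,\vert u(\{x\})\vert \geq 0 .
\end{equation*}
Retaining only the contribution of the index $j^*$ with $x^{u,j^*}_{\text{LSI}} = \widehat{x}^u_{\text{LSI}}$, using Lemma~\ref{lemma: LSI valid} to identify $\A_u \cap B_{2R}(\widehat{x}^u_{\text{LSI}})$ with $\A_u \cap B_R(\bar{x}^{j^*})$, and invoking the mass estimate $\vert u(B_R(\bar{x}^{j^*}))\vert \geq \bar{\mu}$ from~\ref{prop: sublevel coef convergence}, I obtain $\varphi(u,\tilde{v}_u) \geq \bar{\mu}\,\Phi(u)/(4M)$.

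The standard Lipschitz-gradient bound on $F\circ K$ combined with convexity of $\mnorm{\cdot}$ then gives, for any $\eta \in [0,1]$,
\begin{equation*}
    J\bigl(u+\eta(\tilde{v}_u-u)\bigr) - J(u) \leq -\eta\,\varphi(u,\tilde{v}_u) + \frac{L}{2}\eta^2\Vert K(\tilde{v}_u-u)\Vert_Y^2 \leq -\eta\,\frac{\bar{\mu}\Phi(u)}{4M} + \frac{L\tilde{C}^2}{2}\eta^2\Phi(u) ,
\end{equation*}
where the last step uses Lemma~\ref{lemma:bound on K}. Choosing $\tilde{\eta}_u = \min\{1,\bar{\mu}/(4ML\tilde{C}^2)\}$ optimizes the right-hand side over $[0,1]$, and invoking $\Phi(u) \geq r_J(u)$ from Lemma~\ref{lem:resandgap} yields $r_J(\tilde{u}_+) \leq \zeta\, r_J(u)$ for some $\zeta \in (0,1)$ depending only on the problem parameters (shrinking the constant if necessary to stay in $(0,1)$ when the unconstrained minimizer would give a negative bound).

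The main obstacle is the intermediate transfer of the ``global'' quantity $\Phi(u)$ (which involves the supremum of $\vert p_u\vert$ over all of $\Omega$) to a ``local'' improvement at the \texttt{LSI} point $\widehat{x}^u_{\text{LSI}}$ near $\bar{\A}$: this is precisely what Lemma~\ref{lemma: LSI suboptimal} accomplishes, while the purpose of the hypothesis $\Phi_{\A_u}(u) \leq \Phi(u)/2$ is to guarantee that the ``outside $\A_u$'' portion of $\Phi(u)$ is itself of order $\Phi(u)$, so that the lumped direction $\tilde{v}_u$ provides an honest linear-in-$r_J(u)$ descent rather than a vanishing one.
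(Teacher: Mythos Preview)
Your proposal is correct and follows essentially the same approach as the paper: Taylor expansion along $\tilde v_u-u$, the $K$-bound from Lemma~\ref{lemma:bound on K} for the quadratic term, the sign structure together with Lemma~\ref{lemma: LSI suboptimal} and the decomposition~\eqref{eq:combinedgap} to obtain $\langle p_u,\tilde v_u-u\rangle\geq \tfrac{\bar\mu}{4M}\Phi(u)$, and then minimization in $\eta$ plus $\Phi(u)\geq r_J(u)$. The only cosmetic difference is that the paper keeps the iterate-dependent mass $\widehat\mu_u$ in the step size and bounds it by $\bar\mu$ afterwards, whereas you pass to $\bar\mu$ immediately; both yield the same $\zeta$.
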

\begin{proof}
    Let $\eta \in [0,1]$ be arbitrary but fixed. A Taylor expansion reveals 
    \begin{equation}\label{eq: first linear decomposition}
        r_J(u+ \eta (\tilde{v}_u-u))\leq r_J(u)+\eta\langle p_u , u-\tilde{v}_u\rangle+\frac{L}{2}\eta^2\Vert K(\tilde{v}_u-u)\Vert_Y^2.
    \end{equation}
    Since $u$ is not optimal, we have $0 \leq \Phi_{\A_u}(u)<\Phi(u)$ by assumption and thus
    \begin{equation}\label{eq:simple Phi}
        \Phi(u)=M\left(\cnorm{p_u}-\max\left\{\max_{x\in\A_u}\vert p_u(x)\vert, \alpha\right\}\right)+\Phi_{\A_u}(u)
    \end{equation}
according to \eqref{eq:combinedgap}. From Lemma \ref{lemma:bound on K}, we get
    \begin{align*}
        \frac{L}{2}\Vert&K(\tilde{v}_u-u)\Vert_Y^2\leq\frac{L \tilde{C}^2}{2} \Phi(u).
    \end{align*} 
Further recall that, for $j=1, \dots, \bar{N}$, $\A_u \cap B_{2R}(x^{u,j}_{\text{LSI}})=  \A_u \cap B_R(\bar{x}^j)$, $x^{u,j}_{\text{LSI}}\in B_R(\bar{x}^j)$, as well as that $p_u$ doesn't change sign on $B_R(\bar{x}^j)$, see Lemma~\ref{lemma: LSI valid} and Proposition~\ref{prop: sublevel p}. As a consequence, and due to $\A_u\subset\Omega_R$ and $\sign(u(\{x\}))=\sign(p_u(x))$ for all $x\in\A_u$, we have
\begin{align*}
    \langle p_u, u-\tilde{v}_u\rangle &=\sum^{\bar{N}}_{j=1} \sum_{x \in \A_u \cap B_R(\bar{x}^j)  } |u(x)| \left( |p_u(x)|-|p_u(x^{u,j}_{\text{LSI}})| \right)
    \\ & \leq \sum^{\bar{N}}_{j=1} \mu_u^j \max_{x \in \A_u \cap B_R(\bar{x}^j)} \left( |p_u(x)|-|p_u(x^{u,j}_{\text{LSI}})| \right) ,
\end{align*}
where $\mu_u^j$ is as in \eqref{prop: sublevel coef convergence}. Defining $\widehat{x}^u_{\text{LSI}}$ as in Lemma \ref{lemma: LSI suboptimal}, setting $\widehat{\mu}_u \coloneqq |u(B_{2R}(\widehat{x}^{u}_{\text{LSI}}))| $, as well as noting that the terms in the brackets are nonpositive by construction, we finally conclude
    \begin{align*}
        \langle p_u, u-\tilde{v}_u\rangle& \leq \widehat{\mu}_u\left(\max_{x\in \A_u\cap B_{2R}(\widehat{x}^{u}_{\text{LSI}})}\vert p_u(x)\vert-\vert p_u(\widehat{x}^{u}_{\text{LSI}})\vert\right)\leq\frac{\widehat{\mu}_u}{2}\left(\max_{x\in \A_u}\vert p_u(x)\vert-\cnorm{p_u}\right)\\
        &\leq\frac{\widehat{\mu}_u}{2}\left(\max\left\{\alpha , \max_{x\in\A_u}\vert p_u(x)\vert\right\}-\cnorm{p_u}\right)\\
        &=\frac{\widehat{\mu}_u}{2}\left(\max\left\{\alpha , \max_{x\in\A_u}\vert p_u(x)\vert\right\}-\cnorm{p_u}-\frac{1}{M}\Phi_{\A_u}(u)\right)+\frac{\widehat{\mu}_u}{2M}\Phi_{\A_u}(u)\\
        &=-\frac{\widehat{\mu}_u}{2M}\Phi(u)+\frac{\widehat{\mu}_u}{2M}\Phi_{\A_u}(u) \leq -\frac{\widehat{\mu}_u}{4M}\Phi(u),
    \end{align*}
where the second inequality follows from Lemma \ref{lemma: LSI suboptimal}, the equality on the third line holds due to \eqref{eq:simple Phi}, and the final inequality is due to $\Phi_{\A_u}(u) \leq \Phi(u)/2$.
In summary, we obtain
    \begin{equation}\label{eq: together bound}
        r_J\left(u+ \eta (\tilde{v}_u-u)\right)\leq r_J(u)+\left(-\frac{\widehat{\mu}_u}{4M}\eta+\frac{L\tilde{C}^2}{2}\eta^2\right)\Phi(u) \quad \text{for all} \quad \eta \in [0,1].
    \end{equation}
    where the right-hand side is minimized by 
    \begin{equation}\label{eq: linear step size}
       \tilde{\eta}_u \coloneqq \min\left\{1, \frac{\widehat \mu_u}{4ML\tilde{C}^2}\right\}.
    \end{equation}  
    Setting this value in \eqref{eq: together bound},
    \begin{align}
        -\frac{\widehat{\mu}_u}{4M}\tilde{\eta}_u+\frac{L\tilde{C}^2}{2}\tilde{\eta}_u^2&\leq-\frac{\widehat{\mu}_u}{8M}\min\left\{1, \frac{\widehat{\mu}_u}{4ML\tilde{C}^2}\right\}\\
        &\leq-\frac{\bar{\mu}}{8M}\min\left\{1, \frac{\bar{\mu}}{4ML\tilde{C}^2}\right\}\\
        &\eqqcolon\zeta-1 ,
    \end{align}
    where $\bar{\mu}$ is as in \eqref{prop: sublevel coef convergence}. Set $\tilde{u}_{+}=u+ \tilde{\eta}_u (\tilde{v}_u-u)$. Then we can write
    \begin{equation*}
        r_J(\tilde{u}_{+})\leq r_J(u)+(\zeta-1)\Phi(u) .
    \end{equation*}
    Since $r_J(u)\leq\Phi(u)$ and $\zeta-1<0$, we conclude
    \begin{equation*}
        r_J(\tilde{u}_{+})\leq r_J(u)+(\zeta-1)r_J(u)=\zeta r_J(u) .
    \end{equation*}
\end{proof}

\begin{algorithm}
\caption{\texttt{LSIStep}}
\label{alg: LSIStep}
\KwIn{Measure $u$}
\KwOut{Measure $u_+$}
$\mathcal{B}_u\leftarrow \texttt{LSI}(u)$\\
$\tilde{v}_u\leftarrow \sum_{x \in \mathcal{B}_u} u(B_{2R}(x)) \delta_x$\\
$\widehat{\mu}_u\leftarrow\vert u(B_{2R}(\widehat{x}_{\textup{LSI}}^u))\vert,\quad$where\quad$\widehat{x}^u_{\text{LSI}}\in\argmax_{x\in\mathcal{B}_u}\left\lbrack\vert p_u(x)\vert-\max_{z\in \A_u\cap B_{2R}(x)}\vert p_u(z)\vert\right\rbrack$\\
$\tilde{\eta}_u\leftarrow\min\left\{1 , \widehat{\mu}_u/\left(16MLC_{K'}^2\left(2M\sqrt{R/\theta}+2MC_{K'}L/(\theta\sqrt{\gamma})+\sqrt{M/\theta}\right)^2\right)\right\}$\\
$\tilde{u}_{+}\leftarrow (1-\tilde{\eta}_u)u+\tilde{\eta}_u\tilde{v}_u$\\
\Return{$\tilde{u}_+$}
\end{algorithm}

\begin{algorithm}
\caption{Lazified \texttt{PDAP} (\texttt{LPDAP})}
\label{alg: local-global}
\SetKw{Goto}{goto}
\SetKw{Line}{line}
\SetKw{And}{and}
\KwIn{Initial iterate $u_0$, initial lazy threshold $\varepsilon_1$ with $r_J(u_0)\leq2M\varepsilon_1$, initial finite-dimensional accuracy $\Psi_1$, constant $C=4LM^2C_K^2$}
$u_{1-}\leftarrow \textup{\texttt{DropStep}}(u_0)$\\
\For{$k=1,2,\cdots$}{
$u_k,w_k\leftarrow\textup{\texttt{CoefficientStep}}(u_{k-}, \Psi_k)$\label{line: coefficient update}\\
$\tilde{u}_{k+}\leftarrow\textup{\texttt{LSIStep}}(u_k)$\\
$\widehat{u}_{k+}, v_k, \e_{k+1}\leftarrow\textup{\texttt{LGCGStep}}(u_k, \e_k, C)$\\
\If{$\varepsilon_{k+1}=0$}{Terminate with~$u_k$ a minimizer of \eqref{eq: problem setting}}
\If{$\Phi^{u_{k-}}(w_k)>\varphi(u_k,v_k)/2$}{
    $\Psi_k\leftarrow\Psi_k/2$\\
    \Goto{\Line{\ref{line: coefficient update}}}\label{line: recompute}
}
$\Psi_{k+1}\leftarrow \Psi_k$\\
$u_{k+}\leftarrow\argmin_{u\in\{\tilde{u}_{k+}, \widehat{u}_{k+}\}}J(u)$\\
$u_{(k+1)-}\leftarrow\textup{\texttt{DropStep}}(u_{k+})$
}
\end{algorithm}

The above results motivate the solution procedure summarized in Algorithm~\ref{alg: local-global}. Let $(u_k)$ be a sequence of iterates generated by Algorithm~\ref{alg: local-global} and assume for now that this sequence is infinite. Set $\A_k\coloneqq\A_{u_k}$ and $p_k\coloneqq p_{u_k}$. This algorithm, as mentioned previously, is both an acceleration of \texttt{LGCG} and a relaxation of \texttt{PDAP}. 

To see the first point, notice that $u_{k+1}$ is always such that $J(u_{k+1})\leq J(\widehat{u}_{k+})$, which means that all of the additional steps in Algorithm~\ref{alg: local-global} can be interpreted as parts of line~\ref{line: improve} of Algorithm~\ref{alg: sublinear parameter-free}. Thus, in particular, noticing that $r_J(u_1)\leq M\e_1$ and using Theorem~\ref{theorem: sublinear}, we can conclude that $r_J(u_k)\rightarrow0$.
 
To see the second point, notice that the \texttt{LSIStep} and \texttt{LGCGStep} can be interpreted as looking for inexact solutions of the maximization problem in \eqref{def:pdapactualsection}, while the \texttt{DropStep} and \texttt{CoefficientStep} are inexact versions of a modified coefficient problem \eqref{eq: positive coefficient update}, which, under the conditions of Lemma~\ref{lemma: output of coefficient step}, also provides inexact solutions of \eqref{eq: low-dimensional problem}.

Algorithm \ref{alg: local-global} computes both lazified \texttt{GCG} directions $v_k$, by Algorithm \ref{alg: lgcg step}, as well as lumped \texttt{LSI} directions $\tilde{v}_k$ by Algorithm~\ref{alg: LSIStep}, choosing the better of the two for the \texttt{GCG} update. We emphasize that the search for locally improved support points, via \texttt{LSI}, is performed before the choice of the lazy \texttt{GCG} direction, since the former is performed locally and provides potential candidates for the latter. 

Since the \texttt{CoefficientStep} does not add any new support points to the iterates, we can use Lemma~\ref{lemma: drop valid} to conclude that $\A_k\subset\Omega_R$ for all $k$ large enough. Also, combining Lemmas~\ref{lemma: drop valid} and \ref{lemma: output of coefficient step} tells us that it holds $\sign(u_k(\{x\}))=\sign(p_k(x))$ for all $x\in\A_k$ for all $k$ large enough. We recall that $\Phi(u_k)$ is only available if an exact call occurs. Hence, we substitute it by a lower estimate $\varphi(u_k,v_k)$, motivated by the construction of lazy \texttt{GCG} steps. On line~\ref{line: recompute}, we restart each iteration with a progressively smaller $\Psi_k$, until $\Phi^{u_{k-}}(w_k) \leq \varphi(u_k,v_k)/2$ is satisfied. At that point, for large $k$, it holds $\Phi^{u_{k-}}(w_k)\geq\Phi_{\A_k}(u_k)$ by Lemma~\ref{lemma: coefficient same Phi} and all conditions of Theorem~\ref{thm:local linear} are satisfied. Thus, it holds $r_J(u_{k+1})\leq\zeta r_J(u_k)$ for all $k$ large enough.

We refer to the aforementioned iteration restarts as \textit{recompute} steps. In order to reflect this additional computational effort, we denote the total number of recompute steps by $s$ and add it as a subscript whenever necessary, e.g. $u_{k,s}$, $\A_{k,s} $, etc., and refer to the successful iterate as $u_k$.

\begin{theorem}\label{theorem:Big O linear}
    Let $u_{k,s}$ be generated by Algorithm~\ref{alg: local-global}. Let $0<\epsilon<\zeta$ be some small positive tolerance and $\zeta$ the constant from Theorem~\ref{thm:local linear}. Then there is a $\check{C}>0$ independent of $\epsilon$ such that $r_J(u_{k,s})\leq\epsilon$ holds whenever $k+s \geq \check{C} \log_\zeta(\epsilon)$.
\end{theorem}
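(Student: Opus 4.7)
The plan is to split the analysis into a global warm-up phase governed by the sublinear rate of \texttt{LGCG}, followed by an asymptotic phase in which all preconditions of Theorem~\ref{thm:local linear} hold and the residuals decay geometrically, while simultaneously bookkeeping the recompute overhead~$s$.

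First, I would note that since $u_{k+1}$ is picked with $J(u_{k+1})\leq J(\widehat{u}_{k+})$ and the \texttt{LGCGStep} is invoked exactly as in Algorithm~\ref{alg: sublinear parameter-free}, Algorithm~\ref{alg: local-global} fits into the framework of Theorem~\ref{theorem: sublinear}. Consequently $r_J(u_k)=\mathcal{O}(1/k)$, and there is an index $k_0=\mathcal{O}(1)$ (independent of $\epsilon$) after which $r_J(u_k)$ is below the joint smallness thresholds required by Lemmas~\ref{lemma: drop valid}, \ref{lemma: coefficient same Phi}, \ref{lemma: output of coefficient step}, \ref{lemma: LSI valid}, \ref{lemma: LSI suboptimal}, \ref{lemma:bound on K} and Theorem~\ref{thm:local linear}. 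Lemma~\ref{lemma: drop valid} applied to $u_{k+}$ then yields $\A_{(k+1)-}\subset\Omega_R$ together with the sign compatibility condition, and Lemma~\ref{lemma: output of coefficient step} shows that these properties are inherited by $u_{k+1}$ and yield $\Phi_{\A_{k+1}}(u_{k+1})\leq \Phi^{u_{(k+1)-}}(w_{k+1})\leq \Psi_{k+1}$. Hence from $k\geq k_0$ onward, the structural hypotheses of Theorem~\ref{thm:local linear} are always in force.

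Next, I would argue that whenever the loop in Algorithm~\ref{alg: local-global} exits the recompute branch, the accepted pair satisfies $\Phi^{u_{k-}}(w_k)\leq \varphi(u_k,v_k)/2$. Since $\varphi(u_k,v_k)\leq \Phi(u_k)$ by Lemma~\ref{lem:explicitDiracsolution}, we obtain $\Phi_{\A_k}(u_k)\leq \Phi(u_k)/2$, and Theorem~\ref{thm:local linear} delivers $r_J(u_{k+1})\leq r_J(\tilde{u}_{k+})\leq \zeta\, r_J(u_k)$ for $k\geq k_0$ (using also $J(u_{k+1})\leq J(u_{k+})\leq J(\tilde{u}_{k+})$). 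Iterating gives $r_J(u_k)\leq \zeta^{k-k_0}r_J(u_{k_0})$ for all $k\geq k_0$, so~$r_J(u_k)\leq \epsilon$ holds as soon as $k\geq k_0+\log_\zeta(\epsilon/r_J(u_{k_0}))$, which is $\mathcal{O}(\log_\zeta\epsilon)$.

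The main obstacle, and the crux of the bookkeeping, is bounding the total number of recomputes $s$. Here I would exploit that the sequence $(\Psi_k)$ is monotonically non-increasing: between successful iterations it is kept unchanged, and within an iteration it is only halved. Thus $s$ equals $\lceil\log_2(\Psi_1/\Psi_{\min})\rceil$, where $\Psi_{\min}$ is the smallest value of $\Psi$ that ever passes the test $\Phi^{u_{k-}}(w_k)\leq\varphi(u_k,v_k)/2$. In the asymptotic regime, $\varphi(u_k,v_k)\geq M\varepsilon_k$ for lazy calls and $\varphi(u_k,v_k)=\Phi(u_k)\geq r_J(u_k)$ for exact calls; combined with Theorem~\ref{theorem: sublinear} (which ensures $r_J(u_k)\leq 2M\varepsilon_k$ throughout), this yields a lower bound $\varphi(u_k,v_k)\geq c\,r_J(u_k)\geq c\,\zeta^{k-k_0}r_J(u_{k_0})$ for some constant $c>0$ depending only on problem data. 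Therefore, to reach residual $\epsilon$, it suffices to take $\Psi_{\min}\geq c'\epsilon$, so $s\leq \lceil\log_2(\Psi_1/(c'\epsilon))\rceil = \mathcal{O}(\log(1/\epsilon))$.

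Finally, combining the two estimates and using that $\log(1/\epsilon)$ and $\log_\zeta(\epsilon)$ differ only by a constant factor (since $\zeta\in(0,1)$ is fixed), we get $k+s\leq \check{C}\log_\zeta(\epsilon)$ for a suitable $\check{C}>0$ independent of $\epsilon$, which is the claim.
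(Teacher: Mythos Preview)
Your overall strategy matches the paper's: establish the lower bound $\varphi(u_k,v_k)\geq c\,r_J(u_k)$, use Theorem~\ref{thm:local linear} to control the outer iteration count, and separately bound the recompute count via the halving of $\Psi$. The paper proceeds the same way (with $c=\tfrac12$, obtained exactly as you outline from the lazy/exact dichotomy together with $r_J(u_k)\leq 2M\varepsilon_k$).

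There is, however, a genuine error in your bookkeeping for the recompute count. You write
\[
\varphi(u_k,v_k)\;\geq\; c\,r_J(u_k)\;\geq\; c\,\zeta^{k-k_0}r_J(u_{k_0}),
\]
but the second inequality is backwards: the geometric decay from Theorem~\ref{thm:local linear} yields $r_J(u_k)\leq \zeta^{k-k_0}r_J(u_{k_0})$ as an \emph{upper} bound, not a lower one, so this chain does not give the lower bound on $\varphi(u_k,v_k)$ you need. The subsequent sentence (``it suffices to take $\Psi_{\min}\geq c'\epsilon$'') inherits this confusion.

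The repair is simple and is precisely how the paper argues. Whenever a recompute is triggered at $(k,s)$ one has, by the \texttt{CoefficientStep} guarantee and the failed test,
\[
\frac{\Psi_1}{2^{s}}=\Psi_{k,s}\;\geq\;\Phi^{u_{k,s-}}(w_{k,s})\;>\;\frac{\varphi(u_{k,s},v_{k,s})}{2}\;\geq\;\frac{r_J(u_{k,s})}{4},
\]
which \emph{directly} gives $r_J(u_{k,s})<4\Psi_1/2^{s}$. Hence $r_J\leq\epsilon$ after at most $s\sim\log_2(\Psi_1/\epsilon)$ recomputes, with no appeal to a lower bound on $r_J$. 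Combining this with your (correct) bound on $k$ and the change of logarithm base finishes the proof as you indicate.
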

\begin{proof}
We emphasize that the convergence behavior of Algorithm \ref{alg: local-global} does not depend on the particular choice of $\epsilon>0$ but only on its initialization. Note that
\begin{equation*}
    \varphi(u_{k,s}, v_{k,s}) \geq \min \{\Phi(u_{k,s}), M \varepsilon_k\} \geq \min\left\{r_J(u_{k,s}), \frac{1}{2} r_J(u_1), \frac{1}{2}r_J(u_{k,s}) \right\} \geq \frac{1}{2}r_J(u_{k,s})
\end{equation*}
for all occurring $(k,s)$-pairs, where the penultimate and final inequalities follow from $M \varepsilon_k=M \varepsilon_1 \geq r_J(u_1)/2$, if no exact call was encountered up to iteration $k$, and 
\begin{equation*}
    M \varepsilon_k \geq \frac{1}{2} \inf_{\tilde{k} < k} \Phi(u_{\tilde{k}}) \geq \frac{1}{2} \inf_{\tilde{k} < k} r_J(u_{\tilde{k}}))= \frac{r_J(u_{k-1})}{2} \geq \frac{r_J(u_{k,s})}{2},
\end{equation*}
due to monotonicity of Algorithm \ref{alg: local-global}, otherwise. Now, first assume that infinitely many recompute steps occur throughout a run of Algorithm \ref{alg: local-global}, i.e. there is a nondecreasing sequence $(k_i)_{i=1}^\infty\subset\N$ such that $\Phi^{u_{k_i,i-}}(w_{k_i,i})>\varphi(u_{k_i,i}, v_{k_i,i})/2$ for all $i\in \N$. By construction, we then have
\begin{equation*}
    \frac{\Psi_1}{2^{i}}=\Psi_{k_i,i} \geq \Phi^{u_{k_i,i-}}(u_{k_i,i}) >\frac{\varphi(u_{k_i,i}, v_{k_i,i})}{2} \geq  \frac{r_J(u_{k_i,i})}{4} .
\end{equation*}
for all $i$ large enough. Consequently, $r_J(u_{k,s}) \leq \epsilon $ holds whenever
\begin{equation*}
  s\geq\left\lceil\log_\frac{1}{2}\left(\frac{1}{4\Psi_1}\right)+\log_\zeta(\epsilon)\right\rceil \geq\left\lceil\log_\frac{1}{2}\left(\frac{1}{4 \Psi_1}\right)+\log_\frac{1}{2}(\epsilon)\right\rceil,
\end{equation*}
where the second inequality is a consequence of $\zeta>1/2$, which can be directly seen in the proof of Theorem~\ref{thm:local linear}. 

It remains to derive a worst-case estimate on the number of outer iterations, i.e. the number of $k$ updates. Thus, w.l.o.g., assume that Algorithm \ref{alg: local-global} performs infinitely many $k$ updates. Since $(r_J(u_k))$ is monotonically decreasing, and in view of Theorem \ref{thm:local linear}, there is a $\bar{k} \in \N$ independent of $\epsilon$ such that
\begin{equation*}
    r_J(u_k)\leq\frac{r_J(u_{\bar{k}})}{\zeta^{\bar{k}}}\zeta^k \leq \epsilon \quad \text{for all} \quad k \geq \left\lceil\log_\zeta\left(\frac{\zeta^{\bar{k}}}{r_J(u_{\bar{k}})}\right)+\log_\zeta(\epsilon)\right\rceil.
\end{equation*}
Adding both estimates yields the desired claim.
\end{proof}

\begin{corollary}\label{corollary: linear lpdap}
    For all pairs $(k,s)$ with $k+s$ large enough, there holds
    \begin{equation*}
        r_J(u_{k,s})\leq \zeta^{\frac{1}{3}(k+s)} .
    \end{equation*}
\end{corollary}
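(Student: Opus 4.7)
The plan is to derive the claim as a direct consequence of Theorem \ref{theorem:Big O linear} by plugging in the specific tolerance $\epsilon \coloneqq \zeta^{(k+s)/3}$ and checking that the associated iteration-complexity condition is automatically fulfilled for $k+s$ sufficiently large. With this choice, $\log_\zeta(\epsilon) = (k+s)/3$, so the sufficient condition $k+s \geq \check{C}\log_\zeta(\epsilon) + C_0$, where $C_0 \geq 0$ collects the additive, initialization-dependent terms implicit in the bound of Theorem \ref{theorem:Big O linear}, rearranges to $(1 - \check{C}/3)(k+s) \geq C_0$.

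The essential observation is that $\check{C}$ can be chosen strictly smaller than $3$. Indeed, revisiting the proof of Theorem \ref{theorem:Big O linear}, the worst-case number of recompute steps needed to achieve residual at most $\epsilon$ is bounded by a constant plus $\log_{1/2}(\epsilon)$, while the worst-case number of outer iterations is bounded by a constant plus $\log_\zeta(\epsilon)$. Summing these contributions yields a combined bound of the form $(1 + c)\log_\zeta(\epsilon) + C_0$, where $c = 1/\log_\zeta(1/2) \in (0,1)$ thanks to $\zeta > 1/2$, a fact used at the end of the proof of Theorem \ref{theorem:Big O linear}. Hence $\check{C}$ may be taken equal to $1+c < 2$, and in particular $1 - \check{C}/3 > 1/3 > 0$.

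With this in hand, the inequality $(1 - \check{C}/3)(k+s) \geq C_0$ holds for every pair $(k,s)$ whose sum exceeds a threshold depending only on the problem constants and the initial iterate; this is exactly what is meant by the phrase \emph{$k+s$ large enough}. Invoking Theorem \ref{theorem:Big O linear} with the chosen $\epsilon$ then yields $r_J(u_{k,s}) \leq \zeta^{(k+s)/3}$, as claimed. The only nonroutine piece of the argument is the bookkeeping needed to extract the specific bound $\check{C} < 3$ from the proof of Theorem \ref{theorem:Big O linear} rather than treating it as an unspecified positive constant; once this is done, the corollary follows by direct substitution.
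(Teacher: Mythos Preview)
Your proposal is correct and follows essentially the same approach as the paper: both arguments substitute $\epsilon=\zeta^{(k+s)/3}$ into the complexity bound obtained in the proof of Theorem~\ref{theorem:Big O linear}, observe that the effective multiplicative constant in front of $\log_\zeta(\epsilon)$ is strictly below $3$ (you sharpen this to $1+c<2$ using $\zeta>1/2$, the paper simply uses the bound $2$ and absorbs the additive constants by taking $\epsilon$ small), and conclude that the sufficient condition is automatically met once $k+s$ is large enough. The only cosmetic difference is that you separate out the additive initialization-dependent constant $C_0$ explicitly, whereas the paper phrases the same slack as ``for $\epsilon$ small enough''.
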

\begin{proof}
    See Appendix~\ref{app:lpdap}.
\end{proof}

\section{Lazifying point-moving approaches}\label{section: Newton}

Finally, we turn to sliding variants i.e. methods allowing to move support points in addition to coefficient optimization via approximately solving
\begin{equation} \label{def:problemPN}
  \min_{z=(\mathbf{x}, \lambda) \in \mathcal{Z}^N =\Omega^N \times \R^N}   \J_N(z) \coloneqq \left\lbrack F(K \mathcal{U}(z)) +\alpha |\lambda|_{\ell_1} \right \rbrack , \quad \text{where} \quad \mathcal{U}(z)= \sum^N_{j=1} \lambda_j \delta_{x_j} \tag{$\mathcal{P}_N$}
\end{equation}
and $\mathbf{x}=(x^1, \dots, x^N)$, $\lambda=(\lambda^1,\dots,\lambda^N)$ are interpreted as elements of $\R^{dN}$ and $\R^N$, respectively. For this problem, we define the residual of $\J_N$ as
\begin{equation*}
    r_{\J_N}(z)= \J_N(z)-\min_{\tilde{z}\in \mathcal{Z}^N} \J_N(\tilde{z}) \quad \text{for all} \quad z \in \mathcal{Z}^N.
\end{equation*}
Throughout this section, we again silently assume that Assumptions~\ref{ass:functions} and \ref{ass:dualvariable} hold and, for the sake of simplicity, assume that $\kappa \in \C^{2,1}(\Omega, Y) $ and $F$ is twice continuously Fréchet differentiable. As a consequence, $\J_N$ is of class $\mathcal{C}^{2}$ on $ \mathring{\mathcal{Z}}^N=\operatorname{int}(\Omega)^N \times (\R \setminus \{0\})^N $ with Lipschitz-continuous derivatives on compact subsets which can be readily calculated via the chain rule. 

We one again move auxiliary proofs to the Appendix~\ref{app: Newton} to improve readability.

Given a sparse measure $u$, we call $z \in \mathcal{Z}^N $, $N=\# \A_u$, with $\mathcal{U}(z)=u$ a \textit{minimal representer} of $u$, abbreviated by $z= \texttt{MR}(u)$. Note that minimal representers are unique up to suitable permutations of their components. By definition, we have $\J_N(z) \geq J(\mathcal{U}(z)) $ for all $z \in \mathcal{Z}^N$ and $\J_N(z) = J(\mathcal{U}(z))$ for minimal representers. 

For $N \geq\bar{N}$, we readily verify that the set of minimizers of \eqref{def:problemPN} consists of all admissible $\tilde{z} \in \mathcal{Z}^N$ with $\mathcal{U}(\tilde{z})=\bar{u}$. In particular, for $N=\bar{N}$, \eqref{def:problemPN} admits exactly $N$ minimizers which are obtained by permutations of
\begin{equation*}
    \bar{z}=(\bar{\mathbf{x}}, \bar{\lambda}) \in \mathring{\mathcal{Z}}^N \quad \text{with} \quad \bar{\mathbf{x}}=(\bar{x}^1,\dots, \bar{x}^{\bar{N}}),~ \bar{\lambda}=(\bar{\lambda}^1,\dots, \bar{\lambda}^{\bar{N}})
\end{equation*}
where we consider the same numbering as in Assumption \ref{ass:dualvariable}. In this case, we set $\bar{\mathcal{Z}}= \argmin \eqref{def:problemPN}$ and define the distance
\begin{equation*}
    \operatorname{dist}(z, \bar{\mathcal{Z}})\coloneqq\min_{\tilde{z}\in \bar{\mathcal{Z}}} \|z-\tilde{z}\| \quad \text{for all} \quad z \in \mathcal{Z}^N.
\end{equation*}
If $u= \mathcal{U}(z_1)=\mathcal{U}(z_2)$, $z_1,z_2 \in \mathcal{Z}^N$, note that $\operatorname{dist}(z_1, \bar{\mathcal{Z}})=\operatorname{dist}(z_2, \bar{\mathcal{Z}})$.

For $N>\bar{N}$, we similarly conclude that \eqref{def:problemPN} admits infinitely many minimizers. 

Despite its finite-dimensionality, we emphasize that \eqref{def:problemPN} is significantly more challenging than coefficient optimization, first, due to its nonconvexity, caused by the nonlinearity of the kernel $\kappa$, as well as, second, the potentially complicated geometry of $\Omega$. However, for $N=\bar{N}$, \eqref{def:problemPN} satisfies a second-order sufficient optimality condition in its global minimizers.  
\begin{proposition}
    \label{lemma: Hess j positive definite}
   Set $N= \bar{N}$ and let $\bar{z} \in \mathring{\mathcal{Z}}^N $ be a global minimizer of \eqref{def:problemPN}. Then $\nabla \J_N(\bar{z})=0$ and $\nabla^2 \J_N(\bar{z}) $ is positive definite.
\end{proposition}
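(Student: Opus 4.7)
The plan is to exploit that $\bar{z}$ lies in the interior $\mathring{\mathcal{Z}}^{\bar{N}}$ to render $\J_{\bar{N}}$ smooth locally, and then to split the Hessian into a Gauss--Newton piece and a parameterization-curvature piece that are each controlled by Assumption~\ref{ass:dualvariable}.

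First, since $\bar{\lambda}^j \neq 0$ and $\bar{x}^j \in \operatorname{int}(\Omega)$ for every $j$, the signs of the $\lambda^j$ remain constant on a neighborhood of $\bar{\lambda}$, so the $\ell^1$-term reduces to the linear functional $\lambda \mapsto \sum_j \sign(\bar{\lambda}^j)\lambda^j$ and $\J_{\bar{N}}$ is $\C^2$ there. Writing $\Phi(z) = K\mathcal{U}(z)$ and $\bar{y} = K\bar{u}$, a direct chain rule computation, together with the pointwise identity $(\nabla F(\bar{y}),\kappa(x))_Y = -\bar{p}(x)$, yields
\begin{equation*}
\partial_{\lambda^j}\J_{\bar{N}}(\bar{z}) = -\bar{p}(\bar{x}^j) + \alpha \sign(\bar{\lambda}^j), \qquad \partial_{x^j}\J_{\bar{N}}(\bar{z}) = -\bar{\lambda}^j \nabla\bar{p}(\bar{x}^j).
\end{equation*}
Both expressions vanish: the first by Proposition~\ref{prop:firstorder} combined with \eqref{ass:complementary}, which forces $\bar{p}(\bar{x}^j) = \alpha\sign(\bar{\lambda}^j)$; the second because $\bar{x}^j$ is an interior maximizer of $|\bar{p}|$, so $\nabla\bar{p}(\bar{x}^j) = 0$ in view of \eqref{ass:regkernel}.

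For the Hessian, I would apply the second-order chain rule. With $h = (\delta\mathbf{x},\delta\lambda) \in \R^{d\bar{N}} \times \R^{\bar{N}}$ and $\delta y \coloneqq D\Phi(\bar{z})h = \sum_j \bigl(\delta\lambda^j \kappa(\bar{x}^j) + \bar{\lambda}^j \nabla\kappa(\bar{x}^j)\delta x^j\bigr)$, the local linearity of the $\ell^1$-term gives
\begin{equation*}
h^{\top} \nabla^2 \J_{\bar{N}}(\bar{z})\,h = (\nabla^2 F(\bar{y})\,\delta y,\delta y)_Y + (\nabla F(\bar{y}), D^2\Phi(\bar{z})(h,h))_Y.
\end{equation*}
Using the separable structure $\Phi(z) = \sum_j \lambda^j \kappa(x^j)$, the mixed $\delta\lambda^j\delta x^j$ cross terms in $D^2\Phi(\bar{z})(h,h)$ drop out upon pairing with $\nabla F(\bar{y})$ thanks to $\nabla\bar{p}(\bar{x}^j) = 0$, while the pure position contributions reduce to $-\sum_j \bar{\lambda}^j (\nabla^2\bar{p}(\bar{x}^j) \delta x^j,\delta x^j)$ via differentiating the identity that defines $\bar{p}$ twice at $\bar{x}^j$.

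To conclude positive definiteness, I would then invoke three ingredients from Assumption~\ref{ass:dualvariable}. Local strong convexity \eqref{ass:strongconv} and $F \in \C^2$ yield $\nabla^2 F(\bar{y}) \geq_L \gamma \operatorname{Id}$, controlling the Gauss--Newton part by $\gamma \|\delta y\|_Y^2$. The curvature condition \eqref{ass:curvature}, together with $\sign(\bar{p}(\bar{x}^j)) = \sign(\bar{\lambda}^j)$, recasts the second contribution as at least $\theta \sum_j |\bar{\lambda}^j|\,\|\delta x^j\|^2$. Therefore
\begin{equation*}
h^{\top} \nabla^2 \J_{\bar{N}}(\bar{z})\,h \geq \gamma \|\delta y\|_Y^2 + \theta \sum_{j=1}^{\bar{N}} |\bar{\lambda}^j|\,\|\delta x^j\|^2 \geq 0,
\end{equation*}
and if this vanishes, the second summand forces $\delta x^j = 0$ for all $j$, which reduces $\delta y$ to $\sum_j \delta\lambda^j \kappa(\bar{x}^j)$; the linear independence of $\{\kappa(\bar{x}^j)\}_{j=1}^{\bar{N}}$ asserted in \eqref{ass:isolated} then yields $\delta\lambda = 0$, hence $h = 0$. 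The main obstacle is the careful bookkeeping in the second-order chain rule ensuring that the mixed $(\delta\lambda^j,\delta x^j)$ blocks of the Hessian do not destroy the sum-of-nonnegative-forms structure; once the cross terms are seen to collapse through the first-order condition $\nabla\bar{p}(\bar{x}^j) = 0$, the three structural pieces of Assumption~\ref{ass:dualvariable} (non-degeneracy, local strong convexity, and kernel independence) enter in a modular fashion and close the argument.
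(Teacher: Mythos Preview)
Your argument is correct and constitutes precisely the detailed computation that the paper omits by deferring to \cite[Theorem 4.41]{dissertationWalter}: the gradient vanishes by interior optimality, and the Hessian splits into a Gauss--Newton block controlled by \eqref{ass:strongconv} plus a parameterization-curvature block controlled by \eqref{ass:curvature}, with the mixed $(\delta\lambda^j,\delta x^j)$ contributions collapsing via $\nabla\bar p(\bar x^j)=0$ and the kernel of the resulting form trivialized by the linear independence in \eqref{ass:isolated}. This is the standard route for such second-order sufficiency results, so your proof and the paper's cited argument agree in substance.
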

\begin{proof}
The statement on the gradient follows immediately since $\bar{z}$ is a minimizer of \eqref{def:problemPN} and $\J_N$ is smooth in the vicinity of $\bar{z}$. The definiteness of the Hessian follows by similar arguments as in \cite[Theorem 4.41]{dissertationWalter}. 
\end{proof}
Hence, given a sufficiently close initial guess, \eqref{def:problemPN} can be efficiently solved via Newton-type methods, which is the main idea pursued throughout this section.
Note that this result is not true for $N>\bar{N}$. In this case, for any minimizer $\tilde{z}=(\tilde{\mathbf{x}}, \tilde{\lambda}) \in \mathring{\mathcal{Z}}_N  $ of \eqref{def:problemPN}, the block matrix $\nabla^2_{\lambda \lambda} \J_N(\tilde{z})$, characterized by
\begin{equation*}
    \delta \lambda^\top \nabla^2_{\lambda \lambda} \J_N(\tilde{z}) \delta \nu=( K\mathcal{U}((\tilde{\mathbf{x}}, \delta \lambda)) ,   \nabla^2 F(K \mathcal{U}(\tilde{z}) ) K\mathcal{U}((\tilde{\mathbf{x}}, \delta \nu)) )_Y \quad \text{for all} \quad \delta \lambda, \delta \nu \in \R^N,
\end{equation*} 
is singular. As a consequence, the proposed algorithm will depend on three building blocks:
\begin{enumerate}
    \item An outer loop consisting of \texttt{LGCG} steps, see Algorithm \ref{alg: lgcg step}, approximate coefficient minimization, as well as drop steps to ensure the global convergence of $u_k$ towards $\bar{u}$ as well as a localization of $\A_k$ around $\bar{\A}$. 
    \item Merging steps to eventually identify the correct number of support points. 
    \item An inner loop performing Newton steps on \eqref{def:problemPN} starting from a minimal representer of the current iterate.
\end{enumerate}

In order to avoid getting stuck prematurely inside of the inner loop, we start by deriving local descent properties of Newton's method in the vicinity of a global minimizer $\bar{z}$ of \eqref{def:problemPN}, $N= \bar{N}$, and relate these to the per-iteration guarantees of the \texttt{LGCG} method, see Lemma \ref{lemma: LGCG in Newton setting}. For this purpose, denote by $m$ and $\bar{m}$ lower and upper estimates on the smallest and largest eigenvalue of $\nabla^2\J_{\bar{N}}(\bar{z})^{-1}$, respectively. Note that these are independent of the particular choice of the global minimizer since they are given by permutations of the $\bar{z}$. In particular, for all $z\in\mathcal{Z}^{\bar{N}}$ in a close enough neighborhood of $\bar{\mathcal{Z}}$, the eigenvalues of $\nabla^2\J_{\bar{N}}(z)^{-1}$ are bounded by $m/2$ and $2\bar{m}$.

In the following, we denote
\begin{align*}
    \texttt{Newton}(z)=\begin{cases} z- \nabla^2 \J_N(z)^{-1} \nabla \J_N(z) &,\quad \det(\nabla^2 \J_N(z))\neq 0  \\
    z &,\quad \text{else}
    \end{cases}.
\end{align*}
By Taylor-approximation as well as standard Newton arguments, we conclude the existence of a Radius $0<\nu_0< R$ as well as of a constant $c_{\text{New}}>0$ such that for all minimizers $\bar{z}$ of Problem \eqref{def:problemPN}, there holds $B_{\nu_0}(\bar{z}) \subset \mathring{\mathcal{Z}}^N$ and every $z=(\mathbf{x}, \lambda) \in B_{\nu_0}(\bar{z}) $ together with its Newton update $z_+=(\mathbf{x}_+, \lambda_+)= \texttt{Newton}(z)$ and $u= \mathcal{U}(z)$ satisfy
\begin{equation} \label{eq:Newtondrop}
    |p_u (x^j)| > \alpha-\sigma/2, \quad \sign(p_u (x^j))=\sign(\lambda^j) \quad \text{for all} \quad j \leq N 
\end{equation}
as well as
\begin{equation} \label{eq:propNewton}
    z_+ \in B_{\nu_0}(\bar{z}), \quad \|z_+-\bar{z}\| \leq c_{\text{New}} \|z-\bar{z}\|^2, \quad \frac{1}{2\bar{m}}\Vert z-\bar{z}\Vert^2\leq r_{\J}(z)\leq\frac{2}{m}\Vert z-\bar{z}\Vert^2 ,
\end{equation}
and
\begin{equation} \label{eq:descentprop1}
 |\lambda_+|_{\ell_1}\leq M, \quad    \J_N(z_+)-\J_N(z)\leq-\frac{m}{8}\Vert\nabla\J_N (z)\Vert^2, \quad \Vert\nabla\J_N(z)\Vert^2\geq\frac{1}{\bar{m}}r_{\J_N}(z). 
\end{equation}
In particular, \eqref{eq:propNewton} implies
\begin{equation} \label{eq:quadconvintro}
 r_{J}(\mathcal{U}(z_+)) \leq  r_{\J_N}(z_+) \leq C_{\text{New}} r_{J}(u)^2 \quad \text{where} \quad C_{\text{New}} = \frac{8 c_{\text{New}}\bar{m}^2}{m}.
\end{equation}
Moreover, given a tolerance $\varepsilon>0$, $u= \mathcal{U}(z)$, and the usual value of $C$, denote by $(u_+,v, \varepsilon_+ )$ the corresponding \texttt{LGCG}-step, i.e. the output of Algorithm \ref{alg: lgcg step}. Invoking Lemma \ref{lemma: LGCG in Newton setting} yields
\begin{equation} \label{eq:NewtonconnectoGCG}
     r_{\mathcal{J}_N}(z) \geq    J(u)-J(u_+)\geq
        \begin{cases}
            \frac{M^2\e_+^2}{2C} &,\quad M\e_+\leq C\\
            \frac{2M\e_+ -C}{2} &,\quad\text{else}
        \end{cases}.  
\end{equation}
Motivated by these estimates, and now for arbitrary $N \in \N$, we accept the Newton step if 
\begin{equation} \label{eq:acceptanceNewton1}
 z_+ \in \mathcal{Z}^N, \quad  |\lambda_+|_{\ell_1}\leq M, \quad    \J_N(z_+)-\J_N(z)\leq-\frac{m}{8}\Vert\nabla\J_N (z)\Vert^2,
\end{equation}
as well as
\begin{equation} \label{eq:acceptanceNewton2}
    \Vert\nabla\J_N(z)\Vert^2\geq \begin{cases}
            \frac{M^2\e^2}{2C\bar{m}} &,\quad M\e\leq C\\
            \frac{2M\e -C}{2\bar{m}} &,\quad\text{else}
        \end{cases}, 
\end{equation}
where $m$ and $\bar{m}$ are treated as hyperparameters and $\e >0$ will be adapted throughout the iterations to avoid unnecessary \texttt{LGCG} steps. Finally, Algorithm \ref{alg: local merging} describes the aforementioned merging procedure relying on the radius parameter $R>0$. The overall procedure is summarized in Algorithm \ref{alg: sublinear with parameters and newton}. To establish its convergence, we will rely on the following observation concerning the combination of drop and local merging steps.

\begin{lemma} \label{lem:genericsparse}
    Consider sequences $\seq{u_n}$, $\seq{u^{\textup{drop}}_n}$ and $\seq{\tilde{u}_n}$ with $u_n \weakstar \bar{u}$, $u^{\textup{drop}}_n=\textup{\texttt{DropStep}}(u_n)$ and
    \begin{equation*}
        J(\tilde{u}_n) \leq J(u^{\textup{drop}}_n), \quad \A_{\tilde{u}_n} \subset \A_{u^{\textup{drop}}_n}, \quad \sign(\tilde{u}_n(\{x\}))=\sign(u^{\textup{drop}}_n(\{x\})) \quad \text{for all} \quad x \in \A_{\tilde{u}_n}.
    \end{equation*}
    Set $u^{\textup{lump}}_n=\textup{\texttt{LM}}(\tilde{u}_{n})$. Then there  is $n(\nu_0)\in\N$ such that we have $\# \A_{u^{\textup{lump}}_n}=\bar{N} $ for all $n \geq n(\nu_0)$ and every minimal representer $z^{\textup{lump}}_n=\textup{\texttt{MR}}(u^{\textup{lump}}_n)$ satisfies $\operatorname{dist}(z^{\textup{lump}}_n, \bar{\mathcal{Z}})< \nu_0$.
\end{lemma}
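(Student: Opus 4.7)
The plan is to show that for $n$ large enough, the drop step and the hypotheses on $\tilde{u}_n$ force $\A_{\tilde{u}_n}$ to partition into exactly $\bar{N}$ clusters localized near $\bar{\mathcal{A}}$, and that $\texttt{LM}$ then produces a measure whose minimal representer converges to $\bar{\mathcal{Z}}$.

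First, I would translate the weak-* convergence into convergence of the associated dual quantities. Applying Lemma~\ref{lem:propofK} to $u_n\wksto\bar{u}$ yields $Ku_n\to K\bar{u}$ strongly in $Y$, hence $F(Ku_n)\to F(\bar{y})$ and $\cnorm{p_{u_n}-\bar{p}}\to 0$. Since the sequence $(u_n)$ is a minimizing one (consistent with the algorithmic construction in which this lemma is applied), $r_J(u_n)\to 0$, so for $n$ sufficiently large Lemma~\ref{lemma: drop valid} applies and gives $J(u_n^{\textup{drop}})\leq J(u_n)$, $\A_{u_n^{\textup{drop}}}\subset\Omega_R$, $\A_{u_n^{\textup{drop}}}\cap B_R(\bar{x}^j)\neq\emptyset$ for all $j\leq\bar{N}$, and sign compatibility of $u_n^{\textup{drop}}$ with $p_{u_n^{\textup{drop}}}$.

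Next, the hypotheses on $\tilde{u}_n$ transfer these properties: $\A_{\tilde{u}_n}\subset\A_{u_n^{\textup{drop}}}\subset\Omega_R$, sign compatibility holds on $\A_{\tilde{u}_n}$, and $J(\tilde{u}_n)\leq J(u_n)$ gives $r_J(\tilde{u}_n)\to 0$. Proposition~\ref{prop: sublevel general}~\ref{prop: sublevel coef convergence} then yields $|\tilde{u}_n(B_R(\bar{x}^j))|\geq\bar{\mu}>0$ for every $j$, and the pairwise disjointness $B_{2R'}(\bar{x}^j)\cap B_{2R'}(\bar{x}^i)=\emptyset$ of Assumption~\eqref{ass:regkernel} forces $\A_{\tilde{u}_n}$ to split into exactly $\bar{N}$ nonempty clusters, one per ball. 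Consequently $\texttt{LM}$ (Algorithm~\ref{alg: local merging}) collapses each cluster into a single Dirac with weight $\lambda_n^j=\tilde{u}_n(B_R(\bar{x}^j))$ and position $x_n^j\in B_R(\bar{x}^j)$, so that $\#\A_{u_n^{\textup{lump}}}=\bar{N}$.

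Finally, for convergence of the minimal representer, $r_J(\tilde{u}_n)\to 0$ together with the uniqueness of $\bar{u}$ and the weak-* compactness in Proposition~\ref{prop:existence} gives $\tilde{u}_n\wksto\bar{u}$. Testing against continuous cutoffs $\phi_j$ equal to $1$ on $\overline{B_R(\bar{x}^j)}$ and supported in $B_{2R}(\bar{x}^j)$ (disjoint by Assumption~\eqref{ass:regkernel}) yields $\lambda_n^j\to\bar{\lambda}^j$. For the positions, the $\mathcal{C}^2$-convergence of $p_{\tilde{u}_n}$ to $\bar{p}$ on $\overline{\Omega_{R'}}$ from Proposition~\ref{prop: sublevel general}~\ref{prop: sublevel p convergence}, combined with the nondegenerate curvature of $\bar{p}$ at each $\bar{x}^j$ encoded in Assumption~\eqref{ass:curvature}, ensures that the unique local maximizer of $|p_{\tilde{u}_n}|$ inside $B_R(\bar{x}^j)$, which is the natural choice for $x_n^j$ under $\texttt{LM}$, converges to $\bar{x}^j$. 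Hence, up to a permutation of components, $z_n^{\textup{lump}}\to\bar{z}\in\bar{\mathcal{Z}}$, so $\operatorname{dist}(z_n^{\textup{lump}},\bar{\mathcal{Z}})\to 0$ and the desired $n(\nu_0)$ exists.

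The main obstacle will be the convergence $x_n^j\to\bar{x}^j$ of the merging positions: this relies both on the precise rule used by $\texttt{LM}$ and, more substantially, on the $\mathcal{C}^2$-closeness of $p_{\tilde{u}_n}$ to $\bar{p}$ together with the isolated, strictly nondegenerate extremal structure of $\bar{p}$ at each $\bar{x}^j$ to guarantee uniqueness of the interior maximizer and continuous dependence on the dual variable.
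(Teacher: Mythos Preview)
Your overall structure is sound up to the last step, but the argument for $x_n^j\to\bar{x}^j$ contains a genuine gap stemming from a misreading of the \texttt{LM} procedure. In Algorithm~\ref{alg: local merging} the merging point is selected as
\[
x_n^j\in\argmax_{x\in\A_{\tilde{u}_n}\cap B_R(\bar{x}^j)}|p_{\tilde{u}_n}(x)|,
\]
i.e.\ a maximizer over the \emph{finite support} of $\tilde{u}_n$, not the continuous local maximizer $\widehat{x}_{\tilde{u}_n}^j$ of $|p_{\tilde{u}_n}|$ on $B_R(\bar{x}^j)$. The $\mathcal{C}^2$-closeness of $p_{\tilde{u}_n}$ to $\bar{p}$ does yield $\widehat{x}_{\tilde{u}_n}^j\to\bar{x}^j$, but this says nothing about the discrete point $x_n^j$, which a priori could sit anywhere in $\A_{\tilde{u}_n}\cap B_R(\bar{x}^j)$ as long as it beats the other support points there. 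Nothing you have established forces any support point of $\tilde{u}_n$ to be close to $\widehat{x}_{\tilde{u}_n}^j$ or to $\bar{x}^j$.

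The paper closes this gap by a different, first-order-optimality argument: from $r_J(\tilde{u}_n)\to0$ one obtains $\alpha\mnorm{\tilde{u}_n}-\dual{p_{\tilde{u}_n}}{\tilde{u}_n}\to0$; the defining maximization property of $x_n^j$ over the support and the sign compatibility then give $\dual{p_{\tilde{u}_n}}{\tilde{u}_n}\leq\dual{p_{\tilde{u}_n}}{u_n^{\textup{lump}}}$, whence $\alpha\mnorm{u_n^{\textup{lump}}}-\dual{\bar{p}}{u_n^{\textup{lump}}}\to0$. Since $\lambda_n^j\to\bar{\lambda}^j\neq0$, this forces $|\bar{p}(x_n^j)|\to\alpha$, and the strict inequality $|\bar{p}(x)|<\alpha$ for $x\in B_R(\bar{x}^j)\setminus\{\bar{x}^j\}$ then yields $x_n^j\to\bar{x}^j$. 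You should replace your last paragraph with an argument of this type.
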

\begin{proof}
 See Appendix~\ref{app: Newton}.
\end{proof}

The next lemma shows that the inner loop of Algorithm \ref{alg: sublinear with parameters and newton} yields quadratic convergence.

\begin{lemma} \label{lem:insideofball}
Let $u_{k,s}$ and $z_{k,s}=\textup{\texttt{MR}}(u_{k,s})$ be generated by Algorithm \ref{alg: sublinear with parameters and newton} and assume that $k$ and $s$ are such that there holds
\begin{equation*}
\#\A_{u_{k,s}}=\bar{N}, \quad \operatorname{dist}(z_{k,s}, \bar{\mathcal{Z}})< \nu_0.
\end{equation*}
Then $u_{k,s+1}$ is well-defined, there holds $u_{k,s+1}=u^{\text{New}}_{k,s}$, $\#\A_{u_{k,s+1}}=\bar{N}$ and every minimal representer $z_{k,s+1}=\textup{\texttt{MR}}(u_{k,s+1})$ satisfies $\operatorname{dist}(z_{k,s+1}, \bar{\mathcal{Z}})< \nu_0$. Moreover, there holds
\begin{equation*}
     r_{J}(u_{k,s+1}) \leq C_{\text{New}} \, r_{J}(u_{k,s})^2
\end{equation*}
\end{lemma}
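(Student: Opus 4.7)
The plan is to verify, in order, that the Newton step is well-defined, that both acceptance criteria \eqref{eq:acceptanceNewton1}--\eqref{eq:acceptanceNewton2} hold, and then to read off the invariance of the Newton basin together with the quadratic contraction. Since $\operatorname{dist}(z_{k,s}, \bar{\mathcal{Z}}) < \nu_0$, I first pick a global minimizer $\bar{z} \in \bar{\mathcal{Z}}$ with $z_{k,s} \in B_{\nu_0}(\bar{z})$. By Proposition~\ref{lemma: Hess j positive definite} combined with continuity of $\nabla^2 \J_{\bar{N}}$ and the choice of $\nu_0$, the Hessian at $z_{k,s}$ is invertible with its inverse having eigenvalues in $[m/2, 2\bar{m}]$; in particular, the output $z_+ = \texttt{Newton}(z_{k,s})$ is well-defined. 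Invoking \eqref{eq:propNewton} then yields $z_+ \in B_{\nu_0}(\bar{z}) \subset \mathring{\mathcal{Z}}^{\bar{N}}$, the quadratic contraction $\|z_+ - \bar{z}\| \leq c_{\text{New}}\|z_{k,s} - \bar{z}\|^2$, and the sandwich estimate on $r_{\J_{\bar{N}}}$ in terms of $\|z - \bar{z}\|^2$.

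Second, I check the acceptance test. The three conditions contained in \eqref{eq:acceptanceNewton1} (membership of $z_+$ in $\mathcal{Z}^{\bar{N}}$, the norm bound $|\lambda_+|_{\ell_1} \leq M$, and the sufficient decrease $\J_{\bar{N}}(z_+) - \J_{\bar{N}}(z_{k,s}) \leq -\frac{m}{8}\|\nabla \J_{\bar{N}}(z_{k,s})\|^2$) are exactly what \eqref{eq:descentprop1} provides. For the more delicate \eqref{eq:acceptanceNewton2}, I couple the gradient lower bound $\|\nabla \J_{\bar{N}}(z_{k,s})\|^2 \geq r_{\J_{\bar{N}}}(z_{k,s})/\bar{m}$ from \eqref{eq:descentprop1} with the identity $r_{\J_{\bar{N}}}(z_{k,s}) = r_J(u_{k,s})$ valid for minimal representers. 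Applied to the hypothetical \texttt{LGCG}-step at $u_{k,s}$ with the current lazy threshold $\varepsilon_k$, \eqref{eq:NewtonconnectoGCG} shows that $r_{\J_{\bar{N}}}(z_{k,s})$ dominates the very piecewise expression appearing on the right-hand side of \eqref{eq:acceptanceNewton2}, so division by $\bar{m}$ delivers the required inequality.

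With the acceptance test passed, Algorithm~\ref{alg: sublinear with parameters and newton} sets $u_{k,s+1} = u^{\text{New}}_{k,s} = \mathcal{U}(z_+)$. Since $z_+ \in \mathring{\mathcal{Z}}^{\bar{N}}$ and sits near $\bar{z}$, whose spatial components are pairwise distinct by Assumption~\ref{ass:dualvariable} (and whose coefficients are all nonzero), the entries of $z_+$ are pairwise distinct in position and nonzero in coefficient for $\nu_0$ small enough. Hence $\#\A_{u_{k,s+1}} = \bar{N}$ and every minimal representer $z_{k,s+1}$ of $u_{k,s+1}$ coincides with $z_+$ up to permutation of indices, giving $\operatorname{dist}(z_{k,s+1}, \bar{\mathcal{Z}}) \leq \|z_+ - \bar{z}\| < \nu_0$. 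The quadratic estimate $r_J(u_{k,s+1}) \leq C_{\text{New}}\, r_J(u_{k,s})^2$ is then immediate from \eqref{eq:quadconvintro} applied to $z_{k,s}$ and $z_+$.

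The main technical obstacle I foresee is the threshold matching inside \eqref{eq:acceptanceNewton2}. The lazy threshold $\varepsilon_k$ is an algorithmic variable that persists across outer iterations and is \emph{a priori} uncoupled from the size of $r_J(u_{k,s})$; once quadratic convergence sets in, $r_J$ becomes orders of magnitude smaller than $\varepsilon_k$, and some care is required to ensure that the piecewise right-hand side of \eqref{eq:acceptanceNewton2} does not exceed the Newton-side gradient lower bound. This forces one to trace through the outer \texttt{LGCG}-call immediately preceding the inner Newton sub-iteration and read off the precise update rule for $\varepsilon_k$ in order to legitimately apply \eqref{eq:NewtonconnectoGCG} with the same threshold as appears in \eqref{eq:acceptanceNewton2}. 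Once this bookkeeping is settled, the remainder of the argument is a direct assembly of the Newton-in-basin facts collected before the statement.
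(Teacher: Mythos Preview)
Your argument tracks the paper's approach closely but contains a genuine gap. You assert that ``Algorithm~\ref{alg: sublinear with parameters and newton} sets $u_{k,s+1} = u^{\text{New}}_{k,s}$'' once the acceptance tests are passed, but this is only literally true when $s \bmod S \neq 0$. When $s \bmod S = 0$, the algorithm instead applies \texttt{DropStep} followed by \texttt{LM} to $u^{\text{New}}_{k,s}$, and you must verify that both operations act as the identity. This is precisely the content of the last part of the paper's proof: \eqref{eq:Newtondrop} (applied to $z^{\text{New}}_{k,s} \in B_{\nu_0}(\bar z)$) shows that every support point of $u^{\text{New}}_{k,s}$ has the correct sign and satisfies $|p_{u^{\text{New}}_{k,s}}(x^j)| > \alpha - \sigma/2$, so the drop set is empty; and since $\nu_0 < R$ the support points lie in distinct balls $B_R(\bar x^j)$, so \texttt{LM} merges nothing. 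Without this step the conclusion $u_{k,s+1} = u^{\text{New}}_{k,s}$ is unjustified whenever the merging clock fires.

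Your treatment of the threshold issue is also slightly misdirected. The resolution is not via the \emph{outer} \texttt{LGCG} call preceding the inner loop; rather, the algorithm itself performs an \texttt{LGCGStep} \emph{inside} the inner loop whenever \eqref{eq:acceptanceNewton2} fails at $\varepsilon_{k,s}$, producing a new $\varepsilon_{k,s+1}$. Inequality \eqref{eq:NewtonconnectoGCG} is stated for the \emph{output} threshold $\varepsilon_+$ of that very step, so combining it with the gradient lower bound in \eqref{eq:descentprop1} yields \eqref{eq:acceptanceNewton2} at $\varepsilon = \varepsilon_{k,s+1}$; hence the inner loop does not break. Your final paragraph correctly identifies this as the delicate point, but the bookkeeping you describe (tracing the outer call) would not resolve it.
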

\begin{proof}
    See Appendix~\ref{app: Newton}.
\end{proof}

Iterating this argument leads to the following corollary.

\begin{corollary} \label{coroll:stuckininner}
Assume that $u_{k,s}$ and $z_{k,s}=\textup{\texttt{MR}}(u_{k,s})$ are generated by Algorithm \ref{alg: sublinear with parameters and newton} and satisfy the assumptions of Lemma \ref{lem:insideofball}. Then Algorithm \ref{alg: sublinear with parameters and newton} does not exit the inner $\textup{\texttt{for}}$ loop in iteration $k$ and yields a sequence $\seq{u_{k,s+n}}$ such that
\begin{equation*}
     r_{J}(u_{k,s+n+1}) \leq C_{\text{New}} \, r_{J}(u_{k,s+n})^2, \quad \# \A_{u_{k,s+n}}= \bar{N}.    
\end{equation*}
for all $n \geq 0$.
\end{corollary}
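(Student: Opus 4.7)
The plan is to derive this corollary by a short induction on $n \geq 0$, with Lemma~\ref{lem:insideofball} serving as both the base case and the inductive step. The core observation is that the conditions $\#\A_{u_{k,s+n}}=\bar{N}$ and $\operatorname{dist}(z_{k,s+n},\bar{\mathcal{Z}})<\nu_0$ are precisely the hypotheses of Lemma~\ref{lem:insideofball}, and that lemma in turn guarantees that these same conditions hold one step later, together with the quadratic descent bound.

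For the base case $n=0$, the hypotheses of the corollary give us exactly what Lemma~\ref{lem:insideofball} requires at iterate $(k,s)$. Applying that lemma yields three things simultaneously: first, $u_{k,s+1}$ is well-defined and satisfies $u_{k,s+1}=u^{\text{New}}_{k,s}$, which is the algorithmic statement that the Newton update passed the acceptance criteria~\eqref{eq:acceptanceNewton1}--\eqref{eq:acceptanceNewton2} and the inner \texttt{for} loop was not exited at step~$s$; second, the invariance conditions $\#\A_{u_{k,s+1}}=\bar{N}$ and $\operatorname{dist}(z_{k,s+1},\bar{\mathcal{Z}})<\nu_0$ for every minimal representer $z_{k,s+1}=\texttt{MR}(u_{k,s+1})$; and third, the desired quadratic estimate $r_J(u_{k,s+1})\leq C_{\text{New}}\,r_J(u_{k,s})^2$.

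For the inductive step, assume the statement has been verified for indices up to $n$. In particular, $u_{k,s+n}$ admits a minimal representer $z_{k,s+n}$ with $\#\A_{u_{k,s+n}}=\bar{N}$ and $\operatorname{dist}(z_{k,s+n},\bar{\mathcal{Z}})<\nu_0$. These are exactly the hypotheses of Lemma~\ref{lem:insideofball} with $(k,s)$ replaced by $(k,s+n)$, and a second invocation of the lemma produces $u_{k,s+n+1}=u^{\text{New}}_{k,s+n}$ (so the inner loop again does not exit), the persistence of $\#\A_{u_{k,s+n+1}}=\bar{N}$ and $\operatorname{dist}(z_{k,s+n+1},\bar{\mathcal{Z}})<\nu_0$, and the quadratic bound $r_J(u_{k,s+n+1})\leq C_{\text{New}}\,r_J(u_{k,s+n})^2$. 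Since minimal representers are unique up to permutation and $\bar{\mathcal{Z}}$ is permutation-invariant, the condition $\operatorname{dist}(z,\bar{\mathcal{Z}})<\nu_0$ is well-defined on the equivalence class, so there is no ambiguity in chaining the invocations of Lemma~\ref{lem:insideofball}.

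The main obstacle in this argument is conceptual rather than technical, and it has already been discharged in Lemma~\ref{lem:insideofball}: one must know that a single Newton step starting in the basin $\{(z,u)\st \#\A_u=\bar{N},\ \operatorname{dist}(\texttt{MR}(u),\bar{\mathcal{Z}})<\nu_0\}$ both lands back in that basin and is accepted by the algorithmic criteria~\eqref{eq:acceptanceNewton1}--\eqref{eq:acceptanceNewton2}. The acceptance in particular requires comparing the Newton descent to the \texttt{LGCG} per-iteration guarantee from~\eqref{eq:NewtonconnectoGCG}, an estimate that is tight only because the lower bound $\|\nabla\J_{\bar N}(z)\|^2\geq r_{\J_{\bar N}}(z)/\bar m$ in~\eqref{eq:descentprop1} holds locally at $\bar z$. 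Once this invariance and acceptance are established by Lemma~\ref{lem:insideofball}, the corollary is nothing more than the iteration of that single step, which is precisely what the induction above carries out.
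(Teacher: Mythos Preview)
Your proof is correct and is exactly the approach the paper takes: the paper introduces the corollary with the sentence ``Iterating this argument leads to the following corollary'' and gives no further proof, so your induction on $n$ using Lemma~\ref{lem:insideofball} as both the base and the inductive step simply spells out that iteration.
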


We are now ready to prove that Algorithm \ref{alg: sublinear with parameters and newton} eventually recovers the correct number of support points and exhibits an asymptotic quadratic rate of convergence. 

\begin{theorem} \label{thm:quadconvergence}
    Let $\seq{u_{k,s}}$ be generated by Algorithm \ref{alg: sublinear with parameters and newton}. Then there are a $\bar{k}$ as well as an $\bar{s}$ such that Algorithm \ref{alg: sublinear with parameters and newton} does not exit the inner $\textup{\texttt{for}}$ loop in iteration $\bar{k}$ and satisfies
\begin{equation*}
     r_{J}(u_{\bar k,\bar{s}+n+1}) \leq C_{\text{New}} \,r_{J}(u_{\bar k,\bar{s}+n})^2, \quad \# \A_{u_{\bar k,\bar{s}+n}}= \bar{N}
\end{equation*}
for all $n \geq 0$.
\end{theorem}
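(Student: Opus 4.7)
The plan is to combine three ingredients: the global sublinear behavior of the outer LGCG-type loop, the structural identification result of Lemma~\ref{lem:genericsparse}, and the self-perpetuating quadratic convergence of Corollary~\ref{coroll:stuckininner}. The overall idea is that after enough outer iterations, the iterates lie weakly$^*$-close to $\bar{u}$, the drop–merge mechanism then produces an iterate with exactly $\bar{N}$ support points whose minimal representer lies in the Newton basin of radius $\nu_0$, and once this happens the acceptance tests \eqref{eq:acceptanceNewton1}–\eqref{eq:acceptanceNewton2} will be satisfied and the algorithm locks into the inner \texttt{for} loop with quadratic contraction.

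First I would verify that each outer update produces a descent at least as good as the one guaranteed by \texttt{LGCGStep}. Since the algorithm always computes an \texttt{LGCG} candidate and only accepts alternatives that do not increase $J$, Lemma~\ref{lemma: LGCG in Newton setting} together with the template of Theorem~\ref{theorem: sublinear} applies mutatis mutandis to the outer sequence $\seq{u_{k,\cdot}}$, yielding $r_J(u_{k,\cdot})\to 0$ and, by Proposition~\ref{prop: sublevel general}, $u_{k,\cdot}\wksto\bar{u}$. I would then inspect the algorithm's intermediate iterates: writing $u_k^{\text{drop}}=\texttt{DropStep}(u_k)$ and denoting by $\tilde{u}_k$ the measure produced after the coefficient step, Lemmas~\ref{lemma: drop valid} and \ref{lemma: output of coefficient step} show that for $k$ large enough, $\A_{\tilde u_k}\subset\A_{u^{\text{drop}}_k}\subset\Omega_R$, $J(\tilde u_k)\leq J(u^{\text{drop}}_k)$, and the signs are compatible. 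Hence the hypotheses of Lemma~\ref{lem:genericsparse} are met, and for all $k\geq k(\nu_0)$ the output $u^{\text{lump}}_k=\texttt{LM}(\tilde u_k)$ satisfies $\#\A_{u^{\text{lump}}_k}=\bar N$ with $\operatorname{dist}(\texttt{MR}(u^{\text{lump}}_k),\bar{\mathcal Z})<\nu_0$.

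Second, I would show that at such an iteration, the Newton acceptance tests trigger. Let $z_k^{\text{lump}}=\texttt{MR}(u_k^{\text{lump}})$ and $z_k^{\text{New}}=\texttt{Newton}(z_k^{\text{lump}})$. Properties \eqref{eq:Newtondrop}–\eqref{eq:descentprop1} immediately give admissibility $z_k^{\text{New}}\in\mathcal{Z}^{\bar N}$ with $|\lambda|_{\ell_1}\leq M$ and the descent inequality $\mathcal J_{\bar N}(z_k^{\text{New}})-\mathcal J_{\bar N}(z_k^{\text{lump}})\leq -\tfrac{m}{8}\|\nabla\mathcal J_{\bar N}(z_k^{\text{lump}})\|^2$, so \eqref{eq:acceptanceNewton1} is clear. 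For \eqref{eq:acceptanceNewton2} I would use $\|\nabla\mathcal J_{\bar N}(z_k^{\text{lump}})\|^2\geq r_{\mathcal J_{\bar N}}(z_k^{\text{lump}})/\bar m\geq r_J(u_k^{\text{lump}})/\bar m$ and bound the right hand side of \eqref{eq:acceptanceNewton2} by the tolerance management of the outer loop: as $r_J(u_{k,\cdot})\to 0$, the adaptive $\varepsilon_k$ shrinks at a controlled rate so that eventually the squared-gradient lower bound beats the right-hand side of \eqref{eq:acceptanceNewton2}. This is the step that needs the most care, because it requires relating the outer tolerance schedule of \texttt{LGCG} to the Newton residual via \eqref{eq:NewtonconnectoGCG}. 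Concretely, I would show that the \emph{first} time both the support size is $\bar N$ and the minimal representer is inside the $\nu_0$-ball, the Newton step satisfies the tests — this defines $\bar k$ and $\bar s$.

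Third, once the Newton step is accepted inside iteration $\bar k$, Lemma~\ref{lem:insideofball} gives $u_{\bar k,\bar s+1}=u_{\bar k,\bar s}^{\text{New}}$, $\#\A_{u_{\bar k,\bar s+1}}=\bar N$, $\operatorname{dist}(\texttt{MR}(u_{\bar k,\bar s+1}),\bar{\mathcal Z})<\nu_0$, and the one-step contraction $r_J(u_{\bar k,\bar s+1})\leq C_{\text{New}}\,r_J(u_{\bar k,\bar s})^2$. Corollary~\ref{coroll:stuckininner} then extends this to all $n\geq 0$, so the algorithm remains inside the inner \texttt{for} loop with iterates $u_{\bar k,\bar s+n}$ satisfying the quadratic bound and support size $\bar N$. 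The main obstacle, as noted, is the bookkeeping around the adaptive tolerance $\varepsilon_k$ and the hyperparameters $m,\bar m$: one has to ensure that the acceptance conditions \eqref{eq:acceptanceNewton1}–\eqref{eq:acceptanceNewton2} are triggered simultaneously with the identification event of Lemma~\ref{lem:genericsparse}, and that once they trigger they cannot be ``undone'' by the outer logic — a fact which follows because the inner loop, once entered, is self-sustaining thanks to Corollary~\ref{coroll:stuckininner}.
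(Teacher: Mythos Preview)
Your outline captures the essential ingredients but has a structural gap: it presupposes that the outer loop generates an infinite sequence $\seq{u_k}$ to which Theorem~\ref{theorem: sublinear} can be applied. This is not guaranteed a priori. The inner \texttt{for} loop may fail to exit at some early outer iteration $k$ \emph{before} the iterates have reached the Newton basin---for instance, if the Newton acceptance tests \eqref{eq:acceptanceNewton1}--\eqref{eq:acceptanceNewton2} happen to be satisfied with the wrong number of support points $N\neq\bar N$. In that situation there is no further outer progress, and your argument ``$r_J(u_{k,\cdot})\to 0$ via the LGCG template'' never gets off the ground.

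The paper closes this gap by a case distinction. If there is an outer index $\bar k$ at which the inner loop does not exit, one analyzes the inner sequence $\seq{u_{\bar k,s}}$ directly: the periodic drop/merge steps force the support size to be eventually nonincreasing, so for large $s$ one has $u_{\bar k,s+1}=u^{\text{New}}_{\bar k,s}$; the accepted descent in \eqref{eq:acceptanceNewton1} together with the gradient lower bound in \eqref{eq:acceptanceNewton2} then forces $\|\nabla\J_N(z_{\bar k,s})\|\to 0$, $\varepsilon_{\bar k,s}\to 0$, and $r_J(u_{\bar k,s})\to 0$, whence $u_{\bar k,s}\wksto\bar u$ and Lemma~\ref{lem:genericsparse} plus Corollary~\ref{coroll:stuckininner} apply \emph{within the inner loop}. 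If instead the inner loop always terminates, your argument (outer LGCG convergence, then Lemma~\ref{lem:genericsparse}) works and Corollary~\ref{coroll:stuckininner} yields a contradiction. You need to supply the first case.

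A secondary point: your treatment of \eqref{eq:acceptanceNewton2} is more delicate than necessary. You try to argue that $\varepsilon_k$ shrinks fast enough relative to $r_J$, but the algorithm is designed so that this is automatic in the Newton basin: if the first check of \eqref{eq:acceptanceNewton2} with $\varepsilon=\varepsilon_{k,s}$ fails, an \texttt{LGCGStep} is performed and the check is repeated with $\varepsilon=\varepsilon_{k,s+1}$, and \eqref{eq:NewtonconnectoGCG} together with \eqref{eq:descentprop1} guarantees this second check passes. This is precisely the content of Lemma~\ref{lem:insideofball}, so once you are in the $\nu_0$-ball there is no separate bookkeeping on $\varepsilon$ to do.
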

\begin{proof}
We will show that $u_{k,s}$ and $z_{k,s}=\textup{\texttt{MR}}(u_{k,s})$ satisfy the assumptions of Lemma \ref{lem:insideofball} after a finite number of outer and inner iterations, the claimed convergence results then follow Corollary \ref{coroll:stuckininner}. For this purpose, we split the discussion into two parts:

First, assume that there is an outer iteration number $\bar{k}$ such that Algorithm \ref{alg: sublinear with parameters and newton} does not exit the inner $\textup{\texttt{for}}$ loop in iteration $\bar k$. For the sake of readability, we drop the index $ \bar k$ and consider the sequence $u_s=u_{\bar k,s}$. We start by noting that, for $s \mod S=0$, we have $\#\A_{u^{\text{drop}}_s}  \leq\# \A_{u^{\text{New}}_{s}}$ with strict inequality iff $u^{\text{drop}}_s \neq u^{\text{New}}_{s} $. Mutatis mutandis, the same holds true for the local merging step on line~\ref{line: local merging}. Since the number of support points does not increase in the remainder of the \texttt{for} loop, we can thus assume that $u_{s+1}=u^{\text{New}}_s$ holds for all $s$ large enough. Notice that, by Lemma~\ref{lemma: LGCG in Newton setting} as well as by the $\e$ update procedure in lines~\ref{line: epsilon update outer} and \ref{line: epsilon update inner} of Algorithm~\ref{alg: sublinear with parameters and newton}, it holds $r_{J}(u_{s}) \leq 2M \e_s$ for all $s$. Furthermore, it holds
\begin{align*}
\J_N(z_{s+1})-\J_N(z_s)\leq-\frac{m}{8}\Vert\nabla\J_N (z_s)\Vert^2, \quad \|\nabla \mathcal{J}_N(z_{s})\|^2 \geq \begin{cases}
            \frac{M^2\e^2_{s+1}}{2C\bar{m}} &,\quad M\e_{s+1}\leq C\\
            \frac{2M\e_{s+1} -C}{2\bar{m}} &,\quad\text{else}
        \end{cases}   
\end{align*}
for all $s$ large enough and we can conclude
\begin{equation*}
    \lim_{s \rightarrow \infty} \|\nabla \mathcal{J}_N(z_s)\| = \lim_{s\rightarrow\infty}\e_s = \lim_{s \rightarrow \infty} r_J(u_s)=0.
\end{equation*}
In particular, we have $u_s \weakstar \bar{u}$. Applying Lemma \ref{lem:genericsparse} to $u_s$, the desired convergence statement then follows from Corollary \ref{coroll:stuckininner}.

Second, assume that, for every $k$, Algorithm \ref{alg: sublinear with parameters and newton} leaves the inner loop after a finite number of steps. Noting that
\begin{equation*}
    J(u_{k+1})\leq J(u_k^\text{coef}) \leq J(u^{\text{drop}}_k) \leq J(u^{\text{GCG}}_k) \leq J(u_k),
\end{equation*}
we conclude $u_k \rightharpoonup^* \bar{u}$ from Theorem \ref{theorem: sublinear}. Hence, again invoking Lemma \ref{lem:genericsparse} (via Lemma~\ref{lemma: output of coefficient step} applied to $u_k^\text{coef}$), we conclude that Corollary \ref{coroll:stuckininner} is applicable to $u_{\bar{k},1}$ and $z_{\bar{k},1}$ for some $\bar{k}$, yielding a contradiction to the assumption that the inner loop is left after finitely many steps.
\end{proof}

Note that, although the \texttt{CoefficientStep} is not needed for the convergence of Algorithm~\ref{alg: sublinear with parameters and newton}, in practice, it greatly accelerates the initial warm-up phase.

\begin{algorithm}
\caption{Local Merging (\texttt{LM})}
\label{alg: local merging}
\KwIn{Sparse measure $u$}
\KwOut{Merged measure $u^{\textup{lump}}$}
$\A\leftarrow \A_u$, $u^{\text{lump}} \leftarrow 0$\\
\While{$\A \neq \emptyset$}{
 choose $x \in \argmax_{x \in \A} |p_u(x)|$\\
 $u^{\text{lump}} \leftarrow u^{\text{lump}}+ u(B_{2R}(x)) \delta_x $ \\
    $\A \leftarrow \A \setminus B_{2R}(x)$
}
\Return{$u^{\textup{lump}}$}
\end{algorithm}

\begin{algorithm}
\caption{Newton Lazified Generalized Conditional Gradient (\texttt{NLGCG})}
\label{alg: sublinear with parameters and newton}
\SetKw{Break}{break}
\KwIn{Initial iterate $u_1$, initial lazy threshold $\varepsilon_1$ with $r_J(u_1)\leq2M\varepsilon_1$, initial finite-dimensional accuracy $\Psi_1$, merging frequency $S$, constant $C=4LM^2C_K^2$}
\For{$k=1,2,\dots$}{
    $u_{k}^{\text{GCG}}, v_k, \e_{k+1}\leftarrow\textup{\texttt{LGCGStep}}(u_k,\e_k,C)$\\
    \If{$\varepsilon_{k+1}=0$}{Terminate with~$u_k$ a minimizer of \eqref{eq: problem setting}}
    $u^{\text{drop}}_{k}\leftarrow\textup{\texttt{DropStep}}(u_{k}^{\text{GCG}})$\\
    $u^{\textup{coef}}_k,w_k\leftarrow\textup{\texttt{CoefficientStep}}(u^{\text{drop}}_{k}, \Psi_k)$\\
    $u_{k,1}\leftarrow\textup{\texttt{LM}}(u^{\textup{coef}}_k)$\\
    $\e_{k,1}\leftarrow\e_{k+1}+\frac{J(u_{k,1})-J(u^{\textup{coef}}_k)}{2M}$\label{line: epsilon update outer}\\
    \For{$s=1,2,\dots$\label{line: for loop}}{
        $z_{k,s} \leftarrow \texttt{MR}(u_{k,s})$,\quad$z^{\text{New}}_{k,s}\leftarrow\textup{\texttt{Newton}}(z_{k,s})$\\
        $u^{\text{GCG}}_{k,s} \leftarrow u_{k,s}$,\quad$u^{\text{New}}_{k,s} \leftarrow \mathcal{U}(z^{\text{New}}_{k,s}) $ \\
        \eIf{$z_{k,s}$ does not satisfy \eqref{eq:acceptanceNewton2}, $\varepsilon=\varepsilon_{k,s}$}
        {
        $u^{\text{GCG}}_{k,s}, v_{k,s}, \e_{k,s+1}\leftarrow\textup{\texttt{LGCGStep}}(u_{k,s},\e_{k,s},C)$\\
        \If{$\varepsilon_{k,s+1}=0$}{Terminate with~$u_{k,s}$ a minimizer of \eqref{eq: problem setting}}
        \If{$z_{k,s}$ does not satisfy \eqref{eq:acceptanceNewton2}, $\varepsilon=\varepsilon_{k,s+1}$}
        {\Break{} line \ref{line: for loop}}
        }
        {
        $\e_{k,s+1}\leftarrow \e_{k,s}$
        }
        \If{$z_{k,s},z^{\textup{New}}_{k,s}$ do not satisfy \eqref{eq:acceptanceNewton1}}{
        \Break{} line \ref{line: for loop}}
        \eIf{ $s \mod S=0$}{$u^{\text{drop}}_{k,s}\leftarrow\textup{\texttt{DropStep}}(u^{\text{New}}_{k,s})$ \\$u_{k,s+1} \leftarrow \texttt{LM}(u^{\text{drop}}_{k,s}) $\label{line: local merging} \\$\e_{k,s+1}\leftarrow\e_{k,s+1}+\frac{J(u_{k,s+1})-J(u^{\text{drop}}_{k,s})}{2M}$\label{line: epsilon update inner} }
        {
        $u_{k,s+1} \leftarrow u^{\text{New}}_{k,s}$\\}
    }
    Choose $u_{k+1}\in\argmin_{u\in\{u_{k}^{\text{coef}},u_{k,1},u^{\text{New}}_{k,s},u^{\text{GCG}}_{k,s}\}}J(u)$\label{line: choice of measure}\\
    $\Psi_{k+1}\leftarrow\Psi_k/2$
}
\end{algorithm}

\begin{remark} \label{rem:othersubroutines}
We emphasize that the Newton step in the inner loop of Algorithm \ref{alg: sublinear with parameters and newton} can be replaced by any other method suitable for \eqref{def:problemPN} and which guarantees local convergence in the vicinity of stationary points with positive definite Hessian, e.g. damped or Quasi-Newton methods. In this case, Theorem \ref{thm:quadconvergence} can be adapted, mutatis mutandis, by replacing the quadratic decrease with an estimate reflecting the asymptotic convergence rate of the respective method. Moreover, we point out that the prescribed per-iteration descent, i.e. the third requirement in \eqref{eq:acceptanceNewton1}, is only needed to ensure that $\nabla \mathcal{J}_N(z_{k,s}) \rightarrow 0$ holds if Algorithm \ref{alg: sublinear with parameters and newton} does not exit the inner loop. Consequently, it can be dropped if the latter is ensured by the particular choice of the employed method, e.g. by means of globalization.
\end{remark}

\section{Numerical experiments}\label{section: numerics}

We close the manuscript with two numerical experiments demonstrating the advantages of the lazy approach towards greedy point insertion and verifying our theoretical results. The experiments are run in \texttt{Python 3.10}, with parallelization and scientific computing functionalities provided by the \texttt{numpy} module. All the code related to these experiments can be found in our GitHub repository\footnote{\url{https://github.com/arsen-hnatiuk/lazified-pdap}}. The tests are executed on a Dell OptiPlex 7060 desktop computer with an Intel Core i7-8700 CPU and 16GB of RAM.

For each problem we run the original \texttt{PDAP} as described in \eqref{def:pdapactualsection}, the proposed lazified version \texttt{LPDAP}, Algorithm~\ref{alg: local-global}, as well as the proposed sliding variant \texttt{NLGCG}, Algorithm~\ref{alg: sublinear with parameters and newton}, all starting from the zero measure. The hyperparameters for the latter two are heuristically chosen separately for each example. Moreover, since all of the considered algorithms guarantee descent, we dynamically update $M$, noting that
\begin{equation*}
    \mnorm{u_k} \leq M_k \coloneqq J(u_k)/\beta
\end{equation*}
since $F$ is nonnegative.

In all experiments, the global search for lazy and exact solutions of the maximization problem in  \eqref{def:pdapactualsection} inside the \texttt{LGCGStep} Algorithm~\ref{alg: lgcg step} is implemented using a Newton's method, initiated at the nodes of an equally spaced grid over $\Omega$, as well as the support of the current iterate. The search for local improved support points in the \texttt{LSI} Algorithm~\ref{alg: local improver} is performed using a Newton's method initiated on a subset of the support of the current iterate. In all cases, a maximum of 5 iterations of the Newton's method are performed per initialization. In the case of the \texttt{LSI}, if an iterate does not fulfill the defining properties of an improved point $x_{LSI}$ after 5 iterations, we conclude that such a point cannot be found. The \texttt{CoefficientStep} Algorithm~\ref{alg: low-dimensional problem} is implemented using a semismooth Newton method based on the normal map, as described in \cite{dissertationMilzarek,dissertationPieper}. Values below $10^{-12}$ are treated as zero.

In order to measure the progress of the considered algorithms, we use the dual gap $\Phi(u_k)$ for \texttt{PDAP}, the estimate $\varphi(u_k,v_k)$ of $\Phi(u_k)$ for \texttt{LPDAP}, and the estimate $2M\e_k$ of the residual $r_J$ for \texttt{NLGCG}. In all cases, we run the specific algorithm until we reach an iterate bringing the associated quantity below the tolerance $10^{-12}$. Finally, we approximate the residual by $r_J(u) \approx J(u)-J(\tilde{u})$ where  $\tilde{u}$ is found by running \texttt{PDAP} until we have $r_J(\tilde{u}) \leq \Phi(\tilde{u})\leq 10^{-14} $. For all three algorithms, we compare the evolution of $r_j(u_k)$ as well as the support size $\# \A_k$ throughout the iterations as well as w.r.t. computational time. In this regard, we consider the total number of inner and outer iterations for \texttt{LPDAP} and \texttt{NLGCG}. Moreover, for the two lazy algorithms we also report on the update of the lazy threshold $\varepsilon_k$ and the number of lazy and exact calls, respectively.

\subsection{Source identification}
As a first experiment, we consider the identification of the initial condition of a free-space heat equation from scarce observations of the associated state at a given time $t>0$, similar to \cite[Section~4.1]{BrediesCarioniFanzonWalter} and  \cite{LeykekhmanDmitriy2020Naos}. In our setting, we describe this by considering $\Omega=[0,1]\times[0,1]$ as well as $F$ and $Ku \in Y=\R^{16}$ via
\begin{equation*}
    F(y)= \frac{1}{2} \|y-y^\dagger\|^2, \quad [Ku]_i=\int_\Omega\frac{1}{4t\pi}e^{-\frac{\Vert x-x_i\Vert^2}{4t}}du(x),\quad i=1,\dots,16,
\end{equation*}
where $x_1,\dots, x_{16}$ are the nodes of a uniform $4\times4$ grid over $\Omega$ and $y^\dagger=Ku^\dagger$ is the observation for the ground truth 
\begin{equation*}
    u^\dagger=1\delta_{(0.28,0.71)}-0.7\delta_{(0.51,0.27)}+0.8\delta_{(0.71,0.53)},
\end{equation*}
which we try to identify.
\begin{table}[ht]
    \centering
    \begin{tabular}{||c c c c c c c c c c c||}
        \hline
        $\alpha$ & $t$ & $\theta$ & $\gamma$ & $\sigma$ & $L$ & $R$ & $m$ & $\bar{m}$ & $C_K$ & $C_{K'}$\\
        \hline 
        $0.1$ & $0.025$ & $0.1$ & $1$ & $0.002$ & $1$ & $0.01$ & $0.001$ & $0.1$ & $6.26$ & $27.13$\\
        \hline
    \end{tabular}
    \caption{Parameters used in the initial source location experiment.}
    \label{table: parameters}
\end{table}

\begin{figure}[ht]
\centering
\begin{subfigure}[b]{0.49\textwidth}
  \centering
  \includegraphics[width=\linewidth]{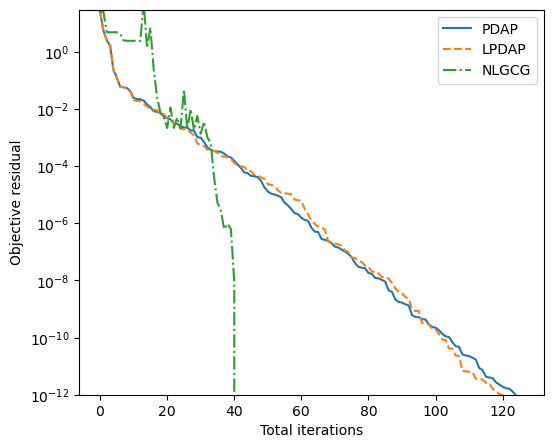}
\end{subfigure}
\hfill
\begin{subfigure}[b]{0.49\textwidth}
  \centering
  \includegraphics[width=\linewidth]{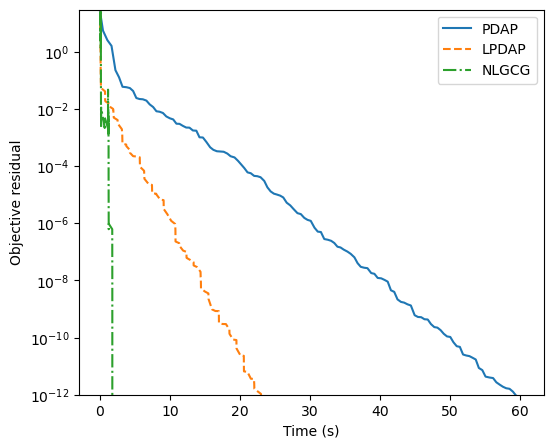}
\end{subfigure}
\caption{Convergence behavior of the tested algorithms.}
\label{fig: heat convergence}
\end{figure}

The hyperparameters for \texttt{LPDAP} and \texttt{NLGCG} are listed in Table \ref{table: parameters}. As predicted, both, \texttt{PDAP} and \texttt{LPDAP} exhibit a linear convergence behavior, while we observe a vastly improved convergence rate for \texttt{NLGCG}. For the latter, the occasional increase in the residual is both  caused by merging as well as bad Newton steps leading to ascent and thus a break of the inner loop. Subsequently, these are compensated by line~\ref{line: choice of measure} of Algorithm~\ref{alg: sublinear with parameters and newton}. The convergence behavior of \texttt{NLGCG} is further illustrated in Figure~\ref{fig: heat local routine} where inner iterations are shaded. Upon entering the first two inner loops, we observe a stagnation in the Newton process, indicating convergence towards a stationary point of \eqref{def:problemPN} with $N \neq \bar{N}$, which avoids getting stuck due to the globalization strategy in \eqref{eq:acceptanceNewton2}. Similar observations can be made in subsequent runs of the inner loop, leading to repeated (lazy) point insertions which manifest as a stepfunction in the support size plot, Figure \ref{fig: support in iterations}. However, asymptotically, excessive points are removed via the merging step and \texttt{NLGCG} enters the asymptotic quadratic convergence regime with $\#\A_k=3$, coinciding with the number of global extrema of $|\bar{p}|$ as predicted by the theory. In comparison, both \texttt{PDAP} and \texttt{LPDAP} suffer from clustering due to their lack of point moving and severely overestimates the size of the optimal support, see also Figure~\ref{fig: clustering}. This observation is most pronounced for \texttt{LPDAP} since exact coefficient minimization also helps to sparsify the weights $\lambda^k$, leading to a more aggressive point removal once $\A_k$ is updated. 

Comparing \texttt{PDAP} and \texttt{LPDAP} directly in Figure \ref{fig: heat convergence}, we point out that the rate of both algorithms is almost identical while the plot associated to \texttt{LPDAP} also includes recompute steps (around 30), i.e. \texttt{LPDAP} performs fewer outer iterations. We attribute this beneficial performance to the local \texttt{LSI}-update which potentially adds several new points instead of a single one to the active set. The benefits of including the lazy paradigm becomes most evident once we compare the computational time of the three methods. Indeed, while \texttt{PDAP} and \texttt{LPDAP} require almost the same amount of \texttt{LGCG} steps, the latter predominantly performs lazy steps, with exact calls occurring roughly on every third iteration, see Table \ref{table: heat lazy steps} and Figure \ref{fig: heat epsilon vs iterations}. Given that lazy calls are significantly cheaper than exact updates, this reduces the overall computational time by a factor of three, see Figure \ref{fig: heat convergence}. Concerning \texttt{NLGCG}, note that \texttt{LGCG} steps only occur in a small fraction of steps, i.e. the method mainly performs cheaper Newton updates. Moreover, similar to \texttt{LPDAP}, most \texttt{LGCG} steps are lazy, leading to convergence in a few seconds. Finally, exact steps only represent around $1/10$ of the overall number of iterations and are, in most cases, required for the verification of~\eqref{eq:acceptanceNewton2}. 



\begin{figure}[ht]
    \centering
    \includegraphics[width=1\textwidth]{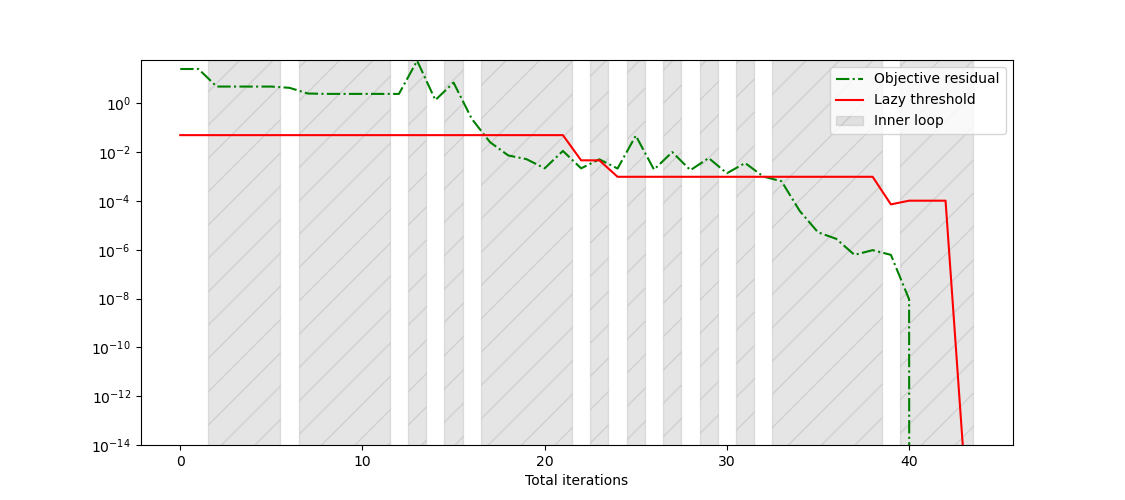}
    \caption{Progression of the \texttt{NLGCG} algorithm, dashed areas correspond to iterations within the inner loop}
    \label{fig: heat local routine}
\end{figure}

\begin{table}[ht]
    \centering
    \begin{tabular}{||c | c c||}
        \hline
        Algorithm & Lazy Calls & Exact Calls\\
        \hline
        \texttt{PDAP} & 0 & 127\\
        \texttt{LPDAP} & 80 & 43\\
        \texttt{NLGCG} & 11 & 4\\
        \hline
    \end{tabular}
    \caption{Number of lazy and exact calls performed by \texttt{LGCGStep} in every algorithm.}
    \label{table: heat lazy steps}
\end{table}

\begin{figure}[ht]
\centering
\begin{subfigure}[b]{0.49\textwidth}
  \centering
  \includegraphics[width=\linewidth]{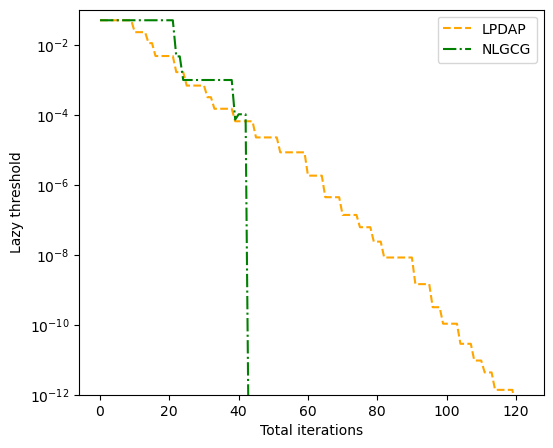}
  \caption{Evolution of the lazy threshold $\e_k$.}
  \label{fig: heat epsilon vs iterations}
\end{subfigure}
\hfill
\begin{subfigure}[b]{0.49\textwidth}
  \centering
  \includegraphics[width=\linewidth]{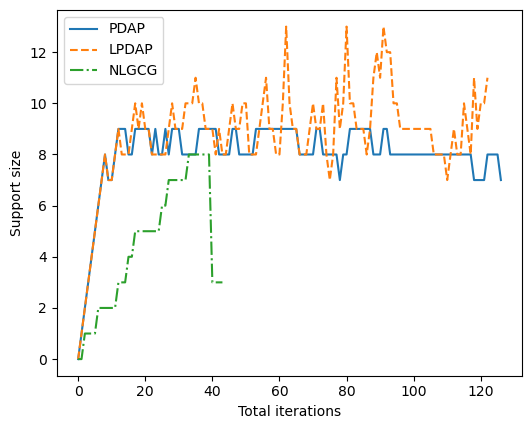}
  \caption{Support size in each iteration.}
  \label{fig: support in iterations}
\end{subfigure}
\caption{Lazy threshold $\e_k$ and support size for each algorithm.}
\end{figure}


\begin{figure}[ht]
\centering
\begin{subfigure}[b]{0.49\textwidth}
  \centering
  \includegraphics[width=\linewidth]{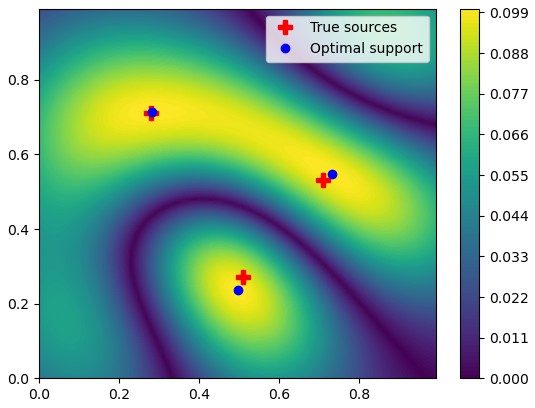}
  \caption{Contour plot of the optimal dual variable $\vert\bar{p}\vert$. Crosses represent the support of the true initial distribution $u^\dagger$ and dots represent the support of the optimal solution $\bar{u}$. Notice that $\vert\bar{p}\vert$ takes its maximum value $\cnorm{\bar{p}}=\alpha$ in the support of $\bar{u}$.}
  \label{fig: optimal dual certificate}
\end{subfigure}
\hfill
\begin{subfigure}[b]{0.49\textwidth}
  \centering
  \includegraphics[width=\linewidth]{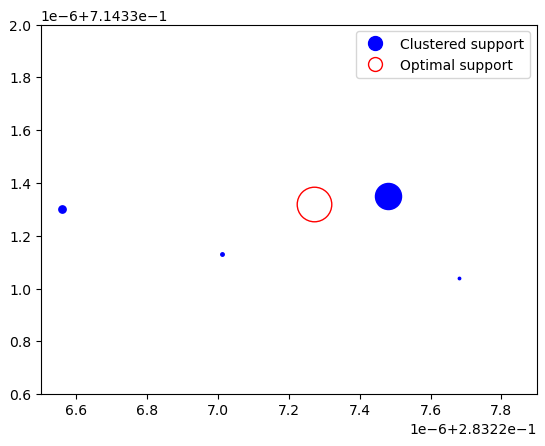}
  \caption{Zoomed-in view of one of the optimal support points (hollow dot). The full dots are the support of an iterate generated by \texttt{PDAP}. The sizes of the dots corresponds to the measure weights. Notice that the scale is of order $10^{-6}$.}
  \label{fig: clustering}
\end{subfigure}
\caption{Behavior of optimal support points}
\label{fig: optimal support}
\end{figure}


\subsection{Signal processing} \label{subsec:signal}
As a second experiment, we consider the recovery of source frequencies from an intercepted signal. The minimization problem itself is similar to the last example using the same $F$ but considering higher-dimensional observations. More in detail, we discretize the time interval $[0,1]$ into $n=120$ equidistant time points $t_i$ and set
\begin{equation*}
    [Ku]_i=\int_\Omega \sin(2\pi t_ix)~\dd u(x) ,\quad i=1,\cdots,120.
\end{equation*}
Concerning the frequency range, we choose $\Omega=[0,60]$. The measurements $y^\dagger \in \R^{120}$ are once again obtained as $y^\dagger=Ku^\dagger$, where
\begin{equation*}
u^\dagger=-1\delta_{3.125}+0.7\delta_7+0.5\delta_{\sqrt{179}}.
\end{equation*}
The resulting signal is illustrated in Figure~\ref{fig: input signal}, while the values of the required hyperparameters are found in Table \ref{table: parameters 2}. As in the previous example, we compare the three algorithms regarding their convergence and computation time, see Figure~\ref{fig: signal convergence}, the evolution of the support size, Figure~\ref{fig: signal support in iterations}, as well as the updates of the lazy threshold, Figure \ref{fig: signal epsilon vs iterations}, in \texttt{LPDAP} and \texttt{NLGCG}, respectively.  

\begin{figure}[ht]
\centering
\begin{subfigure}[b]{.32\textwidth}
  \centering
  \includegraphics[width=\linewidth]{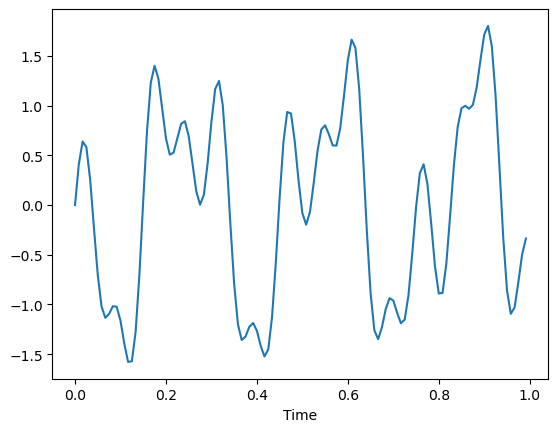}
  \caption{Input signal $y^\dagger$ generated from the true frequency distribution $u^\dagger$ and recorded at $120$ equidistant time points.}
  \label{fig: input signal}
\end{subfigure}
\hfill
\begin{subfigure}[b]{.32\textwidth}
  \centering
  \includegraphics[width=\linewidth]{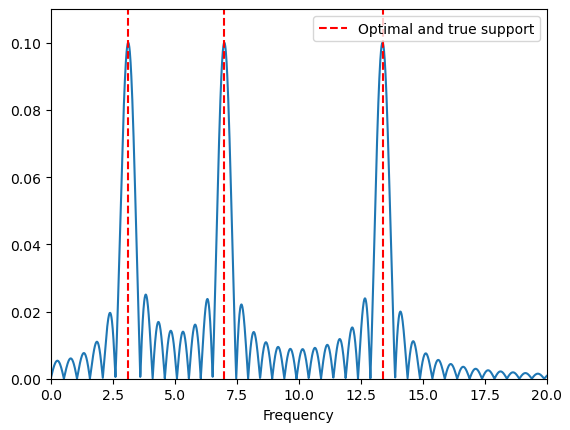}
  \caption{The optimal absolute value dual variable with the locations of the optimal and true support points, restricted to $[0,20]$.}
  \label{fig: signal dual certificate}
\end{subfigure}
\hfill
\begin{subfigure}[b]{.32\textwidth}
  \centering
  \includegraphics[width=\linewidth]{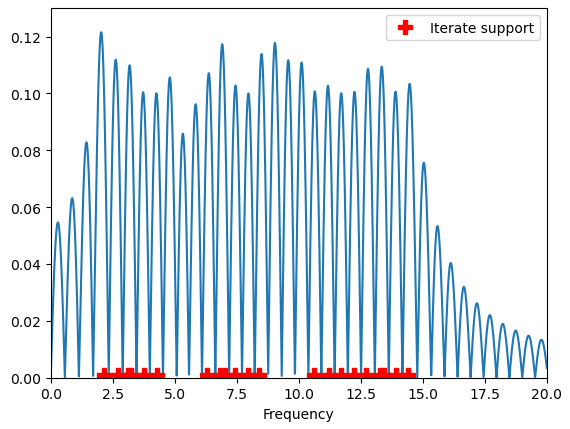}
  \caption{Absolute value dual variable of an intermediate \texttt{LPDAP} iterate with support locations, restricted to $[0,20]$.}
  \label{fig: iterate dual certificate}
\end{subfigure}
\caption{Input signal and dual variable}
\label{fig: optimal support signal}
\end{figure}

\begin{table}[ht]
    \centering
    \begin{tabular}{||c c c c c c c c c c||}
        \hline
        $\alpha$ & $\theta$ & $\gamma$ & $\sigma$ & $L$ & $R$ & $m$ & $\bar{m}$ & $C_K$ & $C_{K'}$\\
        \hline
        $0.1$ & $0.1$ & $1$ & $0.05$ & $1$ & $0.1$ & $0.001$ & $0.1$ & $8.44$ & $39.49$\\
        \hline
    \end{tabular}
    \caption{Parameters used in the signal processing experiment.}
    \label{table: parameters 2}
\end{table}
Both, qualitatively and quantitatively, we can make similar observations as in the last example. Focusing on the differences, note that the gain in computation time by lazifying \texttt{PDAP} is marginal. We attribute this, on the one hand, to the smaller spatial dimension, 1D vs. 2D, which facilitates the calculation of exact \texttt{LGCG} updates. On the other, we again observe a severe overestimation of the optimal support size by \texttt{LPDAP} due to clustering phenomena leading to ill-conditioned coefficient minimization problems and thus increased computation times, see Figure \ref{fig: iterate dual certificate}. Finally, concerning the lazy threshold, we again observe uniformly distributed updates in the case of \texttt{LPDAP}, while \texttt{NLGCG} does not update $\varepsilon_k$ until it enters the asymptotic, quadratic convergence regime, see Figure \ref{fig: signal epsilon vs iterations} and \ref{fig: signal local routine}, respectively.  

\begin{figure}[ht]
\centering
\begin{subfigure}[b]{0.49\textwidth}
  \centering
  \includegraphics[width=\linewidth]{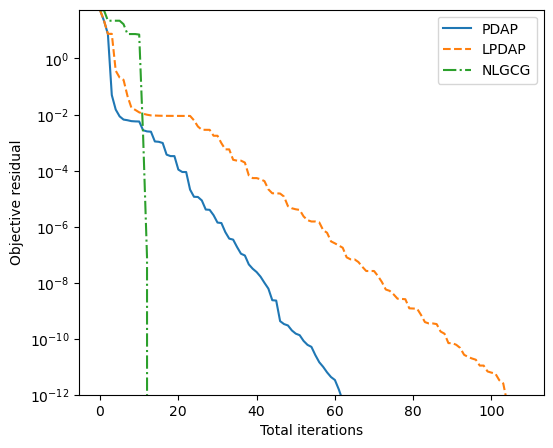}
\end{subfigure}
\hfill
\begin{subfigure}[b]{0.49\textwidth}
  \centering
  \includegraphics[width=\linewidth]{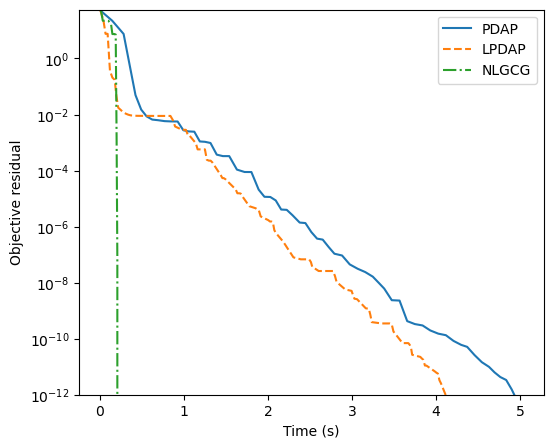}
\end{subfigure}
\caption{Convergence behavior of the tested algorithms.}
\label{fig: signal convergence}
\end{figure}


\begin{table}[ht]
    \centering
    \begin{tabular}{||c | c c||}
        \hline
        Algorithm & Lazy Calls & Exact Calls\\
        \hline
        \texttt{PDAP} & 0 & 64\\
        \texttt{LPDAP} & 79 & 30\\
        \texttt{NLGCG} & 5 & 2\\
        \hline
    \end{tabular}
    \caption{Number of lazy and exact calls performed by \texttt{LGCGStep} in every algorithm.}
    \label{table: signal lazy steps}
\end{table}

\begin{figure}[ht]
\centering
\begin{subfigure}[b]{0.49\textwidth}
  \centering
  \includegraphics[width=\linewidth]{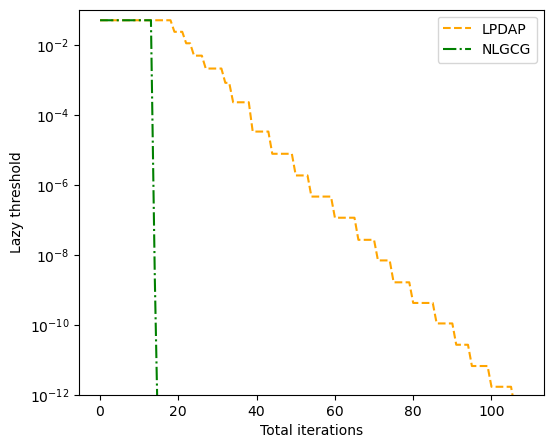}
  \caption{Evolution of the lazy threshold $\e_k$.}
  \label{fig: signal epsilon vs iterations}
\end{subfigure}
\hfill
\begin{subfigure}[b]{0.49\textwidth}
  \centering
  \includegraphics[width=\linewidth]{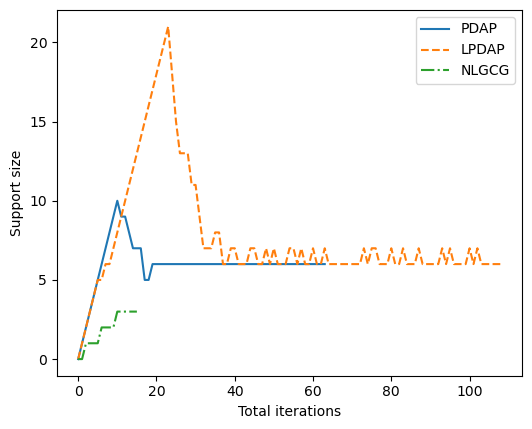}
  \caption{Support size in each iteration.}
  \label{fig: signal support in iterations}
\end{subfigure}
\caption{Lazy threshold $\e_k$ and support size for each algorithm.}
\end{figure}

\begin{figure}[ht]
    \centering
    \includegraphics[width=1\textwidth]{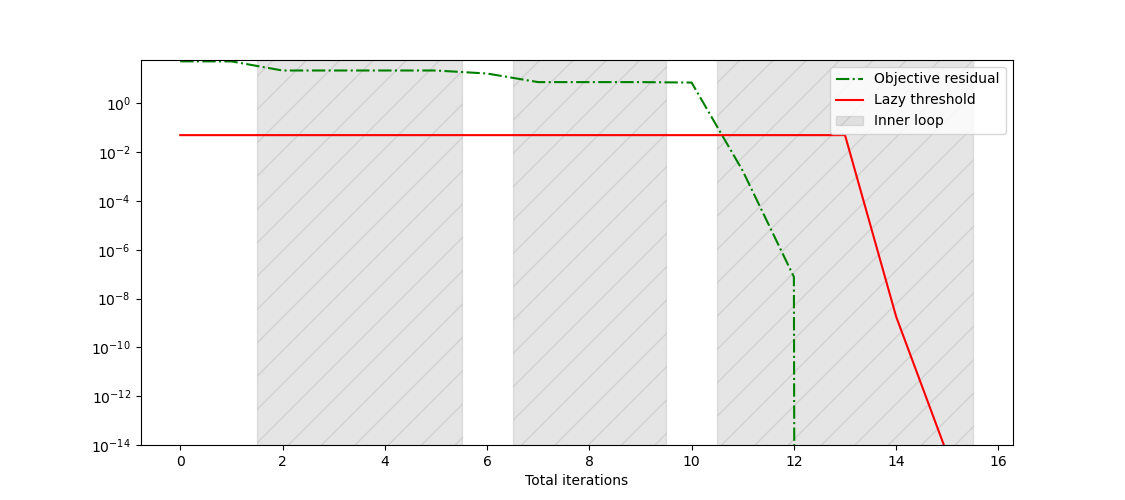}
    \caption{Progression of the Newton algorithm, with dashed areas corresponding to iterations within the inner loop.}
    \label{fig: signal local routine}
\end{figure}


Overall, both examples confirm our theoretical findings and highlight the potential of lazy updates in the considered setting.

\section*{Acknowledgments}
Both authors acknowledge funding by the Deutsche Forschungsgemeinschaft (DFG, German Research
Foundation) under Germany's Excellence Strategy – The Berlin Mathematics
Research Center MATH+ (EXC-2046/1, project ID: 390685689), sub-project AA4-14 ``Data-Driven Prediction of the Band-Gap for Perovskites''.

\bibliographystyle{plain}
\bibliography{sample}

\appendix 

\section{Technical proofs}

\subsection{Proofs for Section \ref{sec:optimality}} \label{app:optimality}

\begin{proof}[Proof of Proposition~\ref{prop: sublevel general}]
    Consider the set $\mathcal{N}(\bar{y})$ from Assumption~\eqref{ass:strongconv}. We will first show that $Ku\in\mathcal{N}(\bar{y})$ for all $u$ with $r_J(u)$ small enough. If this were not the case, there would exist an $\epsilon>0$ and a sequence $(u_k)\subset\Mm$ with $r_J(u_k)\leq1/k$ and $\Vert Ku-\bar{y}\Vert_Y>\epsilon$ for all $k$. The weak*-compactness of the sublevel sets of $r_J$, given by Proposition~\ref{prop:existence}, would imply the existence of a weak*-convergent subsequence, also denoted by $(u_k)$ for readability. The weak* lower semicontinuity of $J$ and the uniqueness of $\bar{u}$ would then yield $u_k\rightharpoonup^*\bar{u}$ and, by the weak*-to-strong continuity of $K$, also $Ku_k\rightarrow\bar{y}$. This contradicts the assumption on $\epsilon$.

    The statements \eqref{prop: sublevel y and nebla F} and \eqref{prop: sublevel p convergence} then follow directly from \cite[Lemma 5.8]{PieperKonstantin2021Lcoa}.

    Now, we prove \eqref{prop: sublevel norm convergence}. On the one hand, it holds that
    \begin{align}
        \alpha\mnorm{\bar{u}}-\alpha\mnorm{u}&=\langle\bar{p} , \bar{u}\rangle-\alpha\mnorm{u}\leq\langle\bar{p} , \bar{u}-u\rangle\\
        &=-\langle\nabla F(K\bar{u}) , K\bar{u}-Ku\rangle\leq\Vert\nabla F(K\bar{u})\Vert_Y\sqrt{r_J(u)/\gamma}
    \end{align}
    for all $u\in\Mm$ with $r_J(u)$ small enough, where the last inequality uses \eqref{prop: sublevel y and nebla F}. On the other hand, we can use the convexity of $F$ to write
    \begin{align}
        r_J(u)\geq-\langle\bar{p} , u-\bar{u}\rangle+\alpha\mnorm{u}-\alpha\mnorm{\bar{u}}\geq-\Vert\nabla F(K\bar{u})\Vert_Y\sqrt{r_J(u)/\gamma}+\alpha\mnorm{u}-\alpha\mnorm{\bar{u}} .
    \end{align}
    Putting both directions together, we conclude that there exists a constant $c_\Mm>0$ such that 
    \begin{equation}
        \left\vert\mnorm{u}-\mnorm{\bar{u}}\right\vert\leq c_\Mm\sqrt{r_J(u)}
    \end{equation}
    for all $u\in\Mm$ with $r_J(u)$ small enough.

    To show \eqref{prop: sublevel coef convergence}, we use a contradiction argument similar to the one at the beginning of the proof. We construct a sequence $(u_k)$ with $\A_{u_k}\subset\Omega_{R'}$ and $r_J(u_k)\leq1/k$ such that for all $k$ there is a $j_k\leq \bar{N}$ with $\mu_k^{j_k}<\bar{\mu}$. Then a subsequence, also denoted by $(u_k)$, satisfies $u_k\rightharpoonup^*\bar{u}$. Since $\A_{u_k}\subset\Omega_{R'}$, for all $j\leq \bar{N}$ there exists a $\phi^j\in\cC$ such that $u_k(B_{R'}(\bar{x}^j))=\langle\phi^j , u_k\rangle\rightarrow\langle\phi^j , \bar{u}\rangle=\bar{\lambda}^j$. This contradicts the definition of $j_k$ and concludes the proof.
\end{proof}

\subsection{Proofs for Section \ref{section:lpdap}} \label{app:lpdap}

\begin{proof}[Proof of Lemma~\ref{lemma: drop valid}]
We start by setting $\tilde{u} \coloneqq u-u^\text{drop}$ and write $\tilde{u}=\tilde u^1 + \tilde{u}^2$, where
\begin{equation*}
    \mathcal{D}^1_u  \coloneqq \left\{ x \in \mathcal{D}_u\;|\; |p_u(x)| \leq \alpha -\sigma/2 \right\}, \quad \mathcal{D}^2_u \coloneqq \mathcal{D}_u \setminus \mathcal{D}^1_u, \quad \tilde{u}^1 \coloneqq u \mres \mathcal{D}^1_u, \quad  \tilde{u}^2 \coloneqq u \mres \mathcal{D}^2_u .
\end{equation*}
By definition, and noting that $\mnorm{u}=\mnorm{u^{\text{drop}}}+\mnorm{\tilde{u}^1}+\mnorm{\tilde{u}^2}$, we obtain
\begin{align}\label{eq: drop proof inequality}
    \alpha \mnorm{u}- \langle p_u, u \rangle  \geq \alpha \mnorm{u^\text{drop}}- \langle p_u, u^{\text{drop}} \rangle + \frac{\sigma}{2} \mnorm{\tilde{u}^1}+\alpha \mnorm{\tilde{u}^2} . 
\end{align}
We can estimate
\begin{equation}\label{eq: drop first estimate}
    \alpha \mnorm{u^{\text{drop}}}- \langle p_u, u^{\text{drop}} \rangle \geq \langle \bar{p}- p_u, u^{\text{drop}} \rangle , \quad |\langle \bar{p}- p_u, u^{\text{drop}} \rangle| \leq c_1 M\sqrt{r_J(u)} , 
\end{equation}
where the constant $c_1$ represents that from \eqref{prop: sublevel p convergence}. A similar estimate is possible on the left-hand side of \eqref{eq: drop proof inequality}. For this, notice that
\begin{align}
    \vert\langle p_u ,  u\rangle-\alpha\mnorm{\bar{u}}\vert&=\vert\langle p_u ,  u\rangle-\langle\bar{p} , \bar{u}\rangle\vert\leq\vert \langle p_u-\bar{p} ,  u\rangle\vert+\vert\langle\bar{p} , u-\bar{u}\rangle\vert\\
    &=\vert\langle p_u-\bar{p} ,  u\rangle\vert+\vert\langle\nabla F(K\bar{u}) , Ku-K\bar{u}\rangle\vert\\
    &\leq c_2\sqrt{r_J(u)} ,
\end{align}
where $c_2$ is some constant resulting from \eqref{prop: sublevel y and nebla F} and \eqref{prop: sublevel p convergence} for $r_J(u)$ small enough. We can use this to write
\begin{equation}\label{eq: drop second estimate}
    \alpha\mnorm{u}-\langle p_u , u\rangle=\alpha\mnorm{u}-\alpha\mnorm{\bar{u}}+\alpha\mnorm{\bar{u}}-\langle p_u ,  u\rangle\leq c_3\sqrt{r_J(u)} ,
\end{equation}
where $c_3$ results from combining $c_2$ with \eqref{prop: sublevel norm convergence}. By setting \eqref{eq: drop first estimate} and \eqref{eq: drop second estimate} in \eqref{eq: drop proof inequality}, we conclude that there exists a constant $c_4$ such that
\begin{equation}
    \mnorm{\tilde{u}^1_k}+ \mnorm{\tilde{u}^2_k}\leq c_4\sqrt{r_J(u)} .
\end{equation}
Finally, by Taylor expansion, we arrive at
\begin{align*}
        J(u^{\textup{drop}})-J(u) &\leq \langle p_u,\tilde{u} \rangle-\alpha\mnorm{\tilde u}+\frac{LC_K}{2}\mnorm{\tilde{u}}^2 \\ &\leq -\frac{\sigma}{2} \mnorm{\tilde{u}^1}-\alpha \mnorm{\tilde{u}^2}+ \frac{LC_K}{2}\left(\mnorm{\tilde{u}^1}+\mnorm{\tilde{u}^2} \right)^2 ,
\end{align*}
where the right-hand side is negative for $r_J(u)$ small enough. This shows the existence of a desired $\Delta\leq\Delta(R)$. In particular, if $r_J(u)\leq\Delta$, then \eqref{prop: p local sign} and \eqref{prop: p sigma} hold for both $u$ and $u^\text{drop}$. The latter, together with the construction of $\D_u$, implies that $\A_{u^\text{drop}}\subset\Omega_R$, while the former allows us to write
\begin{equation}
    \sign(u^\text{drop})=\sign(p_u)=\sign(p_{u^\text{drop}})\quad\text{on}\quad\Omega_R.
\end{equation}
The property $\A_{u^\textup{drop}}\cap B_R(\bar{x}^j)\neq\emptyset$ for all $j\leq\bar{N}$ follows from \eqref{prop: sublevel coef convergence} and $\A_{u^\text{drop}}\subset\Omega_R$.
\end{proof}

\begin{proof}[Proof of Lemma~\ref{lemma: coefficient same Phi}]
    It is clear that for all $j\leq N$ it holds $p_{w}^u(x^j)=\sign(u(\{x^j\}))p_{v^u_w}(x^j)$. Because of $\A_u\subset\Omega_R$, \eqref{prop: p local sign} implies that for all $w$ such that $J(v^u_w)\leq J(u)$ it holds 
    \begin{equation}
        \sign(u(\{x^j\}))=\sign(p_u(x^j))=\sign(p_{v^u_w}(x^j))
    \end{equation}
    and, as a consequence, $p_{w}^u(x^j)=\vert p_{v^u_w}(x^j)\vert$ for all $j\leq N$. Furthermore, it also holds $\langle p_w^u , w\rangle=\langle p_{v_w^u} , v_w^u\rangle$. Thus, inserting this into \eqref{eq: explicit Phi u} and using \eqref{eq:combinedgap} shows that $\Phi^u(w)=\Phi_{\A_u}(v^u_w)$ for all such $w$.  
\end{proof}

\begin{proof}[Proof of Lemma~\ref{lemma: output of coefficient step}]
    First, note that $u=v_{w_0}^u$. Moreover, Algorithm \ref{alg: low-dimensional problem} is well-defined since \eqref{eq: positive coefficient update} admits minimizers. Thus, given the output $w_+$, we have
    \begin{equation}
        J(v_{w_+}^{u})=J^{u}(w_+)\leq J^{u}(w_0)=J(v^{u}_{w_0})=J(u) .
    \end{equation}
    as well as
    \begin{equation*}
        \sign(p_{u}(x))=\sign(u(\{x\}))=\sign(u_+(\{x\})) \quad \text{for all} \quad x \in \A_{u_+}\subseteq\A_u,
    \end{equation*}
    where the first equality follows by assumption and the second by construction of $u_+$. If $r_J(u)$ is small enough, \eqref{prop: p local sign} applies both to $u$ and $u_+=v_{w+}^u$, from which we finally conclude $\sign(p_{u_+}(x))=\sign(u_+(\{x\}))$ for all $x \in \A_{u_+}$. Furthermore, it holds 
    \begin{equation}
        \Phi_{\A_{u_{k+}}}(u_+)\leq\Phi_{\A_u}(u_+)=\Phi_{\A_{u}}(v^{u}_{w+})=\Phi^{u}(w_+)\leq\Psi\,,
    \end{equation}
    where we use $\A_{u_+}\subseteq\A_u$ and Lemma~\ref{lemma: coefficient same Phi}.
\end{proof}

\begin{proof}[Proof of Lemma \ref{lemma: LSI valid}]
We first argue that Algorithm \ref{alg: local improver} is well-defined, i.e. it produces a nonempty set $\mathcal{B}_u$ and we have $\mathcal{B}_u \subset \Omega_R$. For this purpose, let $x \in \A_u$ be arbitrary but fixed. Let $j$ be the unique index such that $x\in B_R(\bar{x}^j)$. On the one hand, according to \eqref{prop: p local max}, $|p_u|$ admits a unique maximizer $\widehat{x}_u^j$ on $B_R(\bar{x}^j)$, which satisfies $\nabla p_u (\widehat{x}_u^j)=0$. Consequently, \eqref{eq: LSI suboptimality ineuqality} and \eqref{eq: LSI Phi inequality} hold trivially for $x_{\text{LSI}}= \widehat{x}_u^j$. Furthermore,
\begin{equation}
    \Vert p_u-\bar{p}\Vert_{\cC(B_R(\bar{x}^j))}\leq c\sqrt{r_J(u)}
\end{equation}
for some $c$ from \eqref{prop: sublevel p convergence}, together with $\Vert p_u\Vert_{\cC(B_R(\bar{x}^j))}=\vert p_u(\widehat{x}_u^j)\vert$ and $\Vert\bar{p}\Vert_{\cC(B_R(\bar{x}^j))}=\alpha$, implies that \eqref{eq: LSI sigma} is also satisfied by this choice of $x_{\textup{LSI}}$ if $r_J(u)$ is small enough. On the other hand, for any point $x_{\text{LSI}} \in B_{2R}(x)$ with \eqref{eq: LSI sigma} we must have $x_{\text{LSI}} \in (B_{2R}(x) \cap \Omega_R) = B_R(\bar{x}^j)$ by \eqref{prop: p sigma}.

It remains to show that $\mathcal{B}_u$ contains exactly $\bar{N}$ points, one per ball $B_R(\bar{x}^j)$. We first observe that for any $x \in \A_u \cap B_R(\bar{x}^j)$ we have
\begin{equation*}
    \A_u \setminus B_{2R}(x)= \A_u \cap (\Omega_R \setminus B_R(\bar{x}^j)).
\end{equation*}
At the same time, \eqref{prop: sublevel coef convergence}, combined with $\A_u\subset\Omega_R$, implies $\A_u \cap B_R(\bar{x}^j) \neq \emptyset$ for all $j \in \{1, \dots,\bar{N}\}$. Combining both observations yields the desired statement.
\end{proof}

\begin{proof}[Proof of Lemma \ref{lemma: LSI suboptimal}]
Let $x \in  \A_u\cap B_R(\bar{x}^{\bar \jmath_u} )$ be arbitrary but fixed. Then there holds
    \begin{align*}
        \vert p_u(\widehat{x}_u)\vert-\vert p_u(x)\vert&=\vert p_u(\widehat{x}_u)\vert-\vert p_u(x^{u,\bar \jmath_u}_{\text{LSI}})\vert+\vert p_u(x^{u,\bar \jmath_u}_{\text{LSI}})\vert-\vert p_u(x)\vert\\
        &\leq2R\Vert\nabla p_u(x^{u,\bar \jmath_u}_{\text{LSI}})\Vert+\vert p_u(x^{u,\bar \jmath_u}_{\text{LSI}})\vert-\vert p_u(x)\vert\\
        &\leq 2\left(\vert p_u(x^{u,\bar \jmath_u}_{\text{LSI}})\vert-\vert p_u(x)\vert\right),
    \end{align*}
    where the first inequality follows from \eqref{prop: p local max} and the second is due to \eqref{eq: LSI suboptimality ineuqality}. We conclude by noting that 
    \begin{equation*}
        \vert p_u(\widehat{x}^u_{\text{LSI}})\vert-\max_{x\in\A_u\cap B_{2R}(\widehat{x}^u_{\text{LSI}})}\vert p_u(x)\vert \geq \vert p_u(x^{u,\bar \jmath_u}_{\text{LSI}})\vert-\max_{x\in\A_u\cap B_{2R}(x^{u,\bar \jmath_u}_{\text{LSI}})}\vert p_u(x)\vert \\ 
    \end{equation*}
    as well as
    \begin{align*}
        \vert p_u(x^{u,\bar \jmath_u}_{\text{LSI}})\vert-\max_{x\in\A_u\cap B_{2R}(x^{u,\bar \jmath_u}_{\text{LSI}})}\vert p_u(x)\vert&=\vert p_u(x^{u,\bar \jmath_u}_{\text{LSI}})\vert-\max_{x\in\A_u\cap B_{R}(\bar{x}^{\bar \jmath})}\vert p_u(x)\vert\\& \geq \frac{1}{2}\left(\vert p_u(\widehat{x}_u)\vert-\max_{x\in\A_u\cap B_R(\bar{x}^j)}\vert p_u(x)\vert\right).
    \end{align*}
\end{proof}

\begin{proof}[Proof of Lemma~\ref{lemma:bound on K}]
From $\A_u\cup\mathcal{B}_u\subset\Omega_R$ it follows
\begin{equation*}
    \langle \phi , \tilde{v}_u-u  \rangle= \sum^{\bar{N}}_{j=1} \sum_{x \in \A_u\cap B_R(\bar{x}^j)  }  u(\{x\}) \left( \phi(x^{u,j}_{\text{LSI}})-\phi(x) \right)
\end{equation*}
for all $\phi\in\cC$, as well as
\begin{equation*}
    \Vert K(\tilde{v}_u-u)\Vert_Y= \sup_{\|y\|_Y=1} (y, K(\tilde{v}_u-u) )_Y=\sup_{\|y\|_Y=1} \langle K_* y ,  \tilde{v}_u-u  \rangle .
\end{equation*}
Consequently, we obtain
\begin{equation*}
    \Vert K(\tilde{v}_u-u)\Vert_Y \leq C_{K'} \sum^{\bar{N}}_{j=1} \sum_{x \in \A_u\cap B_R(\bar{x}^j)  }  |u(\{x\})| \|x^{u,j}_{\text{LSI}}-x\| .
\end{equation*}
    For every $x \in \A_u\cap B_{R}(\bar{x}^j) $, we estimate
    \begin{equation}
        \|x^{u,j}_{\text{LSI}}-x\|\leq\| x^{u,j}_{\text{LSI}}-\widehat{x}_u^j\|+\|\widehat{x}_u^j-\bar{x}^j\|+\|\bar{x}^j-x\Vert .
    \end{equation}
    
    For the first term, \eqref{prop: p curvature and growth}, \eqref{prop: p local max}, and \eqref{eq: LSI Phi inequality} yield
    \begin{equation}
        \Vert x^{u,j}_{\text{LSI}}-\widehat{x}_u^j\Vert\leq 4 \sqrt{\frac{R}{\theta}\Phi_{\A_u}(u)}.
    \end{equation}
    
    For the second term, we argue along the lines of \cite[Lemma 5.14]{PieperKonstantin2021Lcoa}, combined with \eqref{prop: sublevel p convergence}, to obtain
    \begin{equation}
        \Vert\widehat{x}_u^j-\bar{x}^j\Vert\leq \frac{4 C_{K'}L}{\theta\sqrt{\gamma}}\sqrt{\Phi(u)}.
    \end{equation}

    Finally, the third term can be treated analogously to \cite[Proposition~6.8]{BrediesCarioniFanzonWalter}, leading to
    \begin{equation}
       \sum^{\bar{N}}_{j=1} \sum_{x\in \A_u\cap B_R(\bar{x}^j)}| u(\{x\})| \| x-\bar{x}^j\|\leq 2 \sqrt{\frac{M}{\theta}\Phi(u)} .
    \end{equation}

Combining these estimates with $\Phi_{\A_u}(u) \leq \Phi(u)$ yields the desired statement.
\end{proof}

\begin{proof}[Proof of Corollary \ref{corollary: linear lpdap}]
    Using $0<\zeta<1$, we can see that $k+s\geq3\log_\zeta(\epsilon)$ is equivalent to $\zeta^{\frac{1}{3}(k+s)}\leq\epsilon$. From the proof of Theorem~\ref{theorem:Big O linear} we can conclude that, given an $\epsilon$ small enough, all pairs $(k,s)$ satisfying $k+s\geq 3\log_\zeta(\epsilon)$ imply $r_J(u_{k,s})\leq\epsilon$. If $\tilde{k}$ is such that this holds for all $\epsilon\leq\zeta^{\frac{1}{3}\tilde{k}}$, then, for any $k+s\geq\tilde{k}$ it holds that $k+s=3\log_\zeta(\zeta^{\frac{1}{3}(k+s)})$, $\zeta^{\frac{1}{3}(k+s)}\leq\zeta^{\frac{1}{3}\tilde{k}}$, and 
    \begin{equation}
        r_J(u_{k,s})\leq \zeta^{\frac{1}{3}(k+s)} .
    \end{equation}
\end{proof}

\subsection{Proofs for Section \ref{section: Newton}} \label{app: Newton}

\begin{proof}[Proof of Lemma~\ref{lem:genericsparse}]
    According to Lemma~\ref{lemma: drop valid} and by the assumptions on $\tilde{u}_n$, we have
\begin{equation*}
    \A_{\tilde{u}_n} \subset \A_{u^{\text{drop}}_n} \subset\Omega_R , \quad \sign(\tilde{u}_n(\{x\}))=\sign(u^{\text{drop}}_n(\{x\}))=\sign(p_{u^{\text{drop}}_n}(x))=\bar{\lambda}^j=\sign(p_{\tilde{u}_n}(x))
\end{equation*}
for all $x \in \A_{\tilde{u}_n}$ and all $n$ large enough. Furthermore, $J(\tilde{u}_n)\leq J(u_n)$ implies $\tilde{u}_n\rightharpoonup^*\bar{u}$.  By construction of the local merging step as well as by choice of $R$, we thus have
\begin{equation*}
    u^{\text{lump}}_n= \sum^{\bar{N}}_{j=1} \lambda^j_n \delta_{x^j_n}, \quad \text{where} \quad x^j_n \in \argmax_{x\in\A_{\tilde{u}_n} \cap B_R(\bar{x}^j)}\vert p_{\tilde{u}_n}(x)\vert, \quad \lambda^j_n=\tilde{u}_n(B_R(\bar{x}^j)).
\end{equation*}
Set $\bar z^{\text{lump}}_n=(\mathbf{x}_n, \lambda_n)$, where $\mathbf{x}_n=(x^1_n, \dots, x^{\bar{N}}_{n})$ and $\lambda_n=(\lambda^1_n, \dots, \lambda^{\bar N}_n)$. Note that $\bar z^{\text{lump}}_n$ is a minimal representer of $u^{\text{lump}}_n$ and
\begin{equation*}
    \operatorname{dist}(\bar z^{\text{lump}}_n, \bar{\mathcal{Z}})=\operatorname{dist}(z^{\text{lump}}_n, \bar{\mathcal{Z}}).
\end{equation*}
Hence, it suffices to show that $\bar z^{\text{lump}}_n \rightarrow \bar{z}$. For this purpose, we immediately get $\lambda^j_n \rightarrow \bar{\lambda}^j$ due to $\tilde{u}_n  \weakstar \bar{u}$. Next, we note that
\begin{align*}
  0&=  \lim_{n \rightarrow \infty} \left\lbrack  \alpha \mnorm{\tilde{u}_n}- \dual{p_{\tilde{u}_n}}{\tilde{u}_n} \right \rbrack \\ &\geq \lim_{n \rightarrow \infty} \left\lbrack\alpha \mnorm{u^{\text{lump}}_n}- \dual{p_{\tilde{u}_n}}{u^{\text{lump}}_n}\right \rbrack =  \lim_{n \rightarrow \infty} \left \lbrack \alpha \mnorm{u^{\text{lump}}_n}- \dual{\bar{p}}{u^{\text{lump}}_n} \right \rbrack \geq 0,
\end{align*}
where the first inequality follows from $\mnorm{\tilde{u}_n}=\mnorm{u^{\text{lump}}_n}$ as well as
\begin{equation*}
    \dual{p_{\tilde{u}_n}}{\tilde{u}_n}= \sum^{\bar{N}}_{j=1} \sum_{x \in \A_{\tilde{u}_n} \cap B_{R}(\bar{x}^j)} |\tilde{u}_n(\{x\})| |p_{\tilde{u}_n}(x)| \leq \sum^{\bar{N}}_{j=1} |\lambda^j_n| |p_{\tilde{u}_n}(x^n_j)|=\dual{p_{\tilde{u}_n}}{u^{\text{lump}}_n}
\end{equation*}
due to Lemma \ref{lemma: drop valid} as well as by choice of $x^j_n$. Rearranging this further, we obtain 
\begin{align*}
0=\lim_{n \rightarrow \infty} \left \lbrack \alpha \mnorm{u^{\text{lump}}_n}- \dual{\bar{p}}{u^{\text{lump}}_n} \right \rbrack &= \lim_{n \rightarrow \infty} \left\lbrack\sum^{\bar{N}}_{j=1} \lambda^j_n (\alpha -|\bar{p}(x^j_n)|) \right \rbrack =\lim_{n \rightarrow \infty} \left\lbrack\sum^{\bar{N}}_{j=1} \bar\lambda^j (\alpha -|\bar{p}(x^j_n)|) \right \rbrack
\end{align*}
from which we conclude $|\bar{p}(x^j_n)| \rightarrow \alpha$. Since $x^j_n \in B_R (\bar{x}_j)$ for all $n$ large enough and $|\bar{p}(x)|< \alpha$ for all $x\in\Omega\,\backslash\,\bar{\A}$, we get $x^j_n \rightarrow \bar{x}^j$ for all $j\leq \bar{N}$.
\end{proof}

\begin{proof}[Proof of Lemma~\ref{lem:insideofball}]
    According to \eqref{eq:propNewton} and \eqref{eq:quadconvintro}, we have $z^{\text{New}}_{k,s} \in \mathring{\mathcal{Z}}^{\bar N}$ as well as $\operatorname{dist}(z^{\text{New}}_{k,s}, \bar{\mathcal{Z}})< \nu_0$ and
\begin{equation*}
    r_{J}(u^{\text{New}}_{k,s}) \leq C_{\text{New}} \, r_{J}(u_{k,s})^2.
\end{equation*}
Noting that $z^{\text{New}}_{k,s}=\textup{\texttt{MR}}(u^{\text{New}}_{k,s})$, it thus suffices to show that $u_{k,s+1}$ is well-defined and it holds $u_{k,s+1}=u^{\text{New}}_{k,s}$. Regarding the first statement, we point out that $z_{k,s}$ and $z^{\text{New}}_{k,s}$ satisfy \eqref{eq:acceptanceNewton1} and \eqref{eq:acceptanceNewton2} due to \eqref{eq:descentprop1} and \eqref{eq:NewtonconnectoGCG}, respectively. Hence $u_{k,s+1}$ is well-defined. According to \eqref{eq:Newtondrop}, we further conclude $u^{\text{New}}_{k,s}=\textup{\texttt{DropStep}}(u^{\text{New}}_{k,s})$ and, finally, $u^{\text{New}}_{k,s}=\texttt{LM}(u^{\text{New}}_{k,s},R)$ since   
\begin{equation*}
   \A_{u^{\text{New}}_{k,s}} \subset \Omega_R, \quad \A_{u^{\text{New}}_{k,s}} \cap B_R(\bar{x}_j) \neq \emptyset \quad \text{for all} \quad j \leq \bar{N}.
\end{equation*}
Summarizing these observations, we obtain $u_{k,s+1}=u^{\text{New}}_{k,s}$.
\end{proof}

\end{document}